\documentclass[11pt]{amsart}
\usepackage{mathrsfs}
\usepackage{mathdots}
\usepackage{amsmath,amsthm,amsfonts,amssymb,mathrsfs,amscd,mathtools}
\usepackage{graphicx}
\usepackage{dsfont}
\usepackage{hyperref}
\usepackage{xcolor}
\usepackage{pgfplots}
\usepackage{tikz-cd}
\usepackage[mathcal]{euscript}
\usepackage{subcaption}
\usepackage[english]{babel}
\usepackage{url}
\hypersetup{
    colorlinks=true,
    linkcolor=blue, 
    citecolor=magenta,
}
\usepackage{extarrows}
\newcommand{\C}{\mathbb{C}}
\newcommand{\R}{\mathbb{R}}
\newcommand{\N}{\mathbb{N}}
\newcommand{\Z}{\mathbb{Z}}
\newcommand{\Q}{\mathbb{Q}}
\renewcommand{\P}{\mathbb{P}}
\newcommand{\ohne}{\smallsetminus}
\renewcommand{\O}{\mathcal{O}}
\renewcommand{\P}{\mathbb{P}}
\DeclareMathOperator{\Spec}{Spec}
\DeclareMathOperator{\pr}{pr}
\DeclareMathOperator{\id}{id}
\newcommand{\X}{\mathscr{X}}
\renewcommand{\SS}{\mathscr{S}}
\newcommand{\U}{\mathcal{U}}
\newcommand{\V}{\mathcal{V}}
\renewcommand{\mod}{\mathrm{mod}}
\newcommand{\Pic}{\mathrm{Pic}}

\renewcommand{\L}{\mathcal{L}}

\newcommand{\Sbar}{\overline{S}}
\newcommand{\NN}{\mathscr{N}}
\DeclareMathOperator{\colim}{colim}		
\newcommand{\Div}{\mathrm{Div}}
\newcommand{\divisor}{\mathrm{div}}
\newcommand{\an}{\mathrm{an}}
\newcommand{\scrB}{\mathscr{B}}
\newcommand{\scrD}{\mathscr{D}}
\newcommand{\LL}{\mathscr{L}}
\newcommand{\PP}{\mathcal{P}}
\newcommand{\Kbar}{\overline{K}}

\newcommand{\Qbar}{\overline{\mathbb{Q}}}
\calclayout

\newtheorem{thm}{Theorem}[section]
\newtheorem{cor}[thm]{Corollary}
\newtheorem{prop}[thm]{Proposition}
\newtheorem{lem}[thm]{Lemma}

\newtheorem{defp}[thm]{Definition and Proposition}
\newtheorem{theorem}{Theorem}

\numberwithin{equation}{subsection}

\makeatletter
\renewcommand{\theequation}{%
	\thesection.%
	\ifnum\value{subsection}=0
	0%
	\else
	\Alph{subsection}%
	\fi
	.\arabic{equation}%
}
\makeatother

\theoremstyle{definition}
\newtheorem{void}[thm]{}

\newtheorem{defn}[thm]{Definition}

\newtheorem{ex}[thm]{Example}

\theoremstyle{remark}
\newtheorem{rem}[thm]{Remark}
\newtheorem{conv}[thm]{Convention}

\pgfplotsset{compat=1.9}
\title{Adelic line bundles over a Néron model}

\author{Zhelun Chen}
\date{\today}
\thanks{2020 Mathematical Subject Classification: 14G40, 11G50}
\address{Mathematical Institute, Leiden University, Einsteinweg 55, 2333 CC Leiden, The Netherlands}
\email{zl.chen1729@gmail.com}

\begin{document}

\begin{abstract}
Motivated by variation problems for the Néron--Tate height pairing, we construct an adelic Poincaré bundle on the Néron model of an abelian scheme over a curve. This extends the adelic Poincaré bundle of Yuan--Zhang \cite{YZ21}, which encodes the fiberwise Néron--Tate height pairing.
\end{abstract}

\maketitle

\tableofcontents
\section{Introduction}
\subsection{First motivation}
Let $K$ be a number field. Let $S$ be a smooth curve over $K$, let $\eta\in S$ be its generic point, and let $S\subset \Sbar$ be the smooth projective compactification over $K$. Consider an abelian scheme $A/S$ and the Néron--Tate height $\hat{h}$ associated to a symmetric relatively ample line bundle on $A/S$. In 1983, Silverman proved the following remarkable result, which relates the natural height functions attached to $A/S$; \emph{cf}.\ \cite[Theorem~B]{Sil83}:
\begin{thm}[Silverman]\label{Silverman's theorem}
For every section $P\in A(S)$, one has the  asymptotic description of the fiberwise Néron--Tate height:
	\[
	\lim\limits_{s\in S(\Kbar), h_{\Sbar}(s)\to \infty} \frac{\hat{h}(P_s)}{h_{\Sbar}(s)}=\hat{h}(P_\eta)\in \Q,
	\] where $h_{\Sbar}$ is any Weil height on $\Sbar$ associated to a degree-one line bundle on $\Sbar$.
\end{thm}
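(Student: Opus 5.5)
We follow Silverman's strategy: compare the fiberwise Néron--Tate height with a Weil height attached to a line bundle on a compactified model. The comparison error is controlled by the height on the base, and a tensor-power limit then squeezes that error into an $o(h_{\Sbar}(s))$ term.

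\emph{Step 1 (models).} Let $L$ be the symmetric relatively ample line bundle on $A/S$. Choose a projective flat model $\pi\colon \overline{A}\to\Sbar$ extending $A\to S$, and extend $L$ to a line bundle $\overline{L}$ on $\overline{A}$. The section $P$ extends uniquely to a section $\overline{P}\colon\Sbar\to\overline{A}$, since $\Sbar$ is a smooth curve and $\overline{A}$ is proper. Let $e=\deg \overline{P}^*\overline{L}$. Note that $\hat h(P_\eta)$ can be computed as $\lim_n \deg(\overline{[n]P}^*\overline{L})/n^2$. This uses the geometric Néron--Tate height over the function field $K(S)$, which is rational by the theory of the Néron model and Néron pairing; the rationality is part of the claim.

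\emph{Step 2 (uniform comparison with error linear in the base height).} Let $\Sigma=\Sbar\ohne S$. The key estimate, due to Silverman--Tate, is that for all $s\in S(\Kbar)$ and $Q\in A_s(\Kbar)$,
\[
\bigl|\hat h(Q)-h_{\overline{L}}(Q)\bigr|\le c_1 h_{\Sbar}(s)+c_2 .
\]
To prove it, first note that $[2]^*L\otimes L^{-4}$ is trivial on $A$. Hence on $\overline{A}$ it is isomorphic to $\O(D)$ for a divisor $D$ supported over $\Sigma$. Such a divisor is dominated by a multiple of $\pi^*\O(\Sigma)$. This yields $|h_{\overline{L}}(2Q)-4h_{\overline{L}}(Q)|\le c h_{\Sbar}(s)+c'$. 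Tate's telescoping argument then gives the estimate, with constants divided by $3$.

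Applied to $Q=P_s$, the estimate gives $\hat h(P_s)=h_{\overline{L}}(\overline{P}(s))+O(h_{\Sbar}(s))$. Here $h_{\overline{L}}(\overline{P}(s))=h_{\overline{P}^*\overline{L}}(s)=e\,h_{\Sbar}(s)+O(\sqrt{h_{\Sbar}(s)})$, using the standard fact that heights on a curve depend only on degree up to a square root.

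\emph{Step 3 (sharpening via $[n]$).} Apply Step 2 to $[n]P$ in place of $P$ and divide by $n^2$, using $\hat h([n]P_s)=n^2\hat h(P_s)$. This gives
\[
\frac{\hat h(P_s)}{h_{\Sbar}(s)}=\frac{\deg \overline{[n]P}^*\overline{L}}{n^2}+O\!\left(\frac{c_1}{n^2}\right)+o_n(1)\quad(h_{\Sbar}(s)\to\infty).
\]
The constants $c_1,c_2$ are independent of $n$, because they depend only on $(\overline{A},\overline{L})$. So we take $\limsup$ and $\liminf$ as $h_{\Sbar}(s)\to\infty$ first, then let $n\to\infty$. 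Step 1 then yields the limit $\hat h(P_\eta)$.

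\emph{Main obstacle.} The crux is Step 2: the error in comparing $\hat h$ with $h_{\overline{L}}$ must be bounded by $c_1 h_{\Sbar}(s)+c_2$ with constants uniform in the fiber. This requires the divisor $D$ above to be controlled by a vertical pullback from $\Sbar$ on a possibly singular projective model. I would first pass to a normal, or resolved, model and use that $D$ is a Cartier divisor supported on $\pi^{-1}(\Sigma)$. Hence $-m\pi^*\Sigma\le D\le m\pi^*\Sigma$ for some $m$, and effectivity then gives the height inequalities away from the support.
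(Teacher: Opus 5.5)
Your proof is correct, but it follows a different route from the paper's. It is essentially Silverman's original argument: the Silverman--Tate inequality, followed by the $[n]$-trick. The paper only cites that argument.

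The paper never compares $\hat h$ with a Weil height on an arbitrary projective model. It enlarges the open part from $A$ to the identity component $\NN(A)^0$ of the Néron model. There $[2]$ is an honest (étale) endomorphism, and the rigidified extension $\mathcal{L}$ satisfies $[2]^*\mathcal{L}\cong 4\mathcal{L}$ on all of $\NN(A)^0$, including over $\Sigma$. Tate's limit is then run at the level of models: the discrepancy divisors are supported on $\X_i\ohne\NN(A)^0$ and shrink by $4^{-i}$ against a boundary divisor. This produces a nef adelic line bundle $\widehat{\overline{\theta}}$.

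The extension $\overline{P}$ of $nP$ lands in $\NN(A)^0$, so it avoids all these discrepancy divisors. Hence $\frac{1}{n^2}\overline{P}^*\widehat{\overline{\theta}}$ is an adelic $\Q$-line bundle on $\Sbar$ whose height equals $\hat h(P_s)$ exactly for $s\in S(\Kbar)$. Its underlying $\Q$-line bundle has degree $\hat h(P_\eta)$.

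This buys two things your argument does not give. First, rationality of $\hat h(P_\eta)$ comes for free, whereas you must import it. Second, one gets the sharper asymptotic $\hat h(P_s)=\hat h(P_\eta)h_H(s)+O(\sqrt{h_H(s)}+1)$. In your model, the relation $[2]^*L\cong 4L$ holds only over $S$, so your $D$ sits on fibres over $\Sigma$, which every section meets. That is why each fixed $n$ gives only an error $O(h_{\Sbar}(s)/n^2)$, and the limit gives merely $o(h_{\Sbar}(s))$. In exchange, your route is much more elementary: Weil's height machine on one model plus telescoping, with no Néron models or adelic line bundles. Also, the Silverman--Tate inequality it rests on holds over higher-dimensional bases.

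Two small points should be made explicit.
\begin{enumerate}
\item $[2]^*L-4L$ is trivial on $A$ only when $L$ is rigidified. In general it is a pullback from $S$. This is harmless: rigidify $L$ first, which does not change the fiberwise heights.
\item $[2]$ is only a rational self-map of $\overline{A}$, so ``$[2]^*L-4L$ on $\overline{A}$'' has no meaning as written. You must pass to the (normalized) closure $\overline{A}'$ of the graph of $[2]$ in $\overline{A}\times_{\Sbar}\overline{A}$. This equals $A$ over $S$ because $A/S$ is proper, and $D$ should be taken on $\overline{A}'$. Functoriality of heights along the two projections $\overline{A}'\to\overline{A}$ is exactly what gives $|h_{\overline{L}}(2Q)-4h_{\overline{L}}(Q)|\le c\,h_{\Sbar}(s)+c'$, which the telescoping needs.
\end{enumerate}
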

Here $\hat{h}(P_\eta)$ denotes the geometric Néron--Tate height of $P_\eta$, which can be computed as  the degree of a certain $\Q$-line bundle on $\Sbar$ (\emph{cf}.\ Remark \ref{height and height pairing}). An interesting application is already given in \cite[Theorem~C]{Sil83}, where Silverman describes the locus of closed points $s\in S$ for which the specialization map
\[
A(S)\to A_s(\Qbar),\quad P\mapsto P_s
\] is injective, see Corollary \ref{uniform torsion, 1d} at the end of \S\ref{Application to Silverman's Limit Theorem}.

When $A/S$ is a family of elliptic curves, Tate \cite{Tate83} proved the stronger result that, for every $P\in A(S)$, the function
\begin{equation}\label{fiberwise height}
h_P\colon S(\overline{\Q})\to \R,\quad s\mapsto \hat{h}(P_s)
\end{equation} 
differs from a Weil height function on  $\Sbar$  by a bounded function. Tate's result has been generalized by several authors, see Remark \ref{Silverman-Tate, literature}. In particular, the recent theory of adelic line bundles developed by Yuan--Zhang produces a height function $\hat{h}_P$ on $S$ that \emph{exactly} recovers \eqref{fiberwise height}, see \cite[Lemma~6.2.1]{YZ21}. We strengthen this result by constructing a canonical height function $\hat{h}_{P,\Sbar}$ on $\Sbar$ whose restriction to $S$ is $\hat{h}_P$.

In fact, one can consider more generally the Néron--Tate height pairing of sections $P\in A(S)$ and $Q\in A^\vee(S)$, where $A^\vee/S$ denotes the dual abelian scheme. Let $\NN(A)^0/\Sbar$ resp.\ $\NN(A^\vee)^0/\Sbar$ be the identity components of the Néron models of $A/S$ resp.\ $A^\vee/S$. The  Poincaré bundle $\PP\in \Pic(A\times_S A^\vee)$, which by definition is rigidified along the identity section, extends to a rigidified line bundle $\overline{\PP}\in \Pic(\NN(A)^0\times_{\Sbar}\NN(A^\vee)^0)$, see Proposition~\ref{prolongation of  Poincaré  bundle}; we also call $\overline{\PP}$ the \textbf{extended Poincaré bundle}. There is a positive integer $n$ such that~$nP,\, nQ$ extend to sections $\overline{P}\in \NN(A)^0(\Sbar),\, \overline{Q}\in \NN(A^\vee)^0(\Sbar)$ respectively, \emph{cf}.\ Remark~\ref{Rmk: order of component group is finite}.
\begin{theorem}[Theorems \ref{adelic prolongation of Poincare bundle} \& \ref{adelic refinement of Silverman's limit thm}]\label{Theorem A}
	The extended Poincaré bundle $\overline{\PP}$ admits an integrable adelic structure $\widehat{\overline{\PP}}$, and the pullback 
	\[
\widehat{\overline{\mathcal{M}}}\coloneq	\frac{1}{n^2}(\overline{P},\overline{Q})^*\widehat{\overline{\PP}},
	\] which is an integrable adelic line bundle on $\Sbar$, encodes the fiberwise Néron--Tate height pairing of $P, Q$ over $S$, that is, the height $h_{\widehat{\overline{\mathcal{M}}}}$ associated to the adelic line bundle $\widehat{\overline{\mathcal{M}}}$ (see \cite[\S 5.3.1]{YZ21}) satisfies
	\[
	h_{\widehat{\overline{\mathcal{M}}}}(s)=\langle P_s,Q_s\rangle_{A_s}
	\] for all $s\in S(\Qbar)$.
\end{theorem}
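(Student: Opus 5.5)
The plan is to build $\widehat{\overline{\PP}}$ by Tate's limiting argument, using the bi-extension structure of $\overline{\PP}$ instead of a single multiplication-by-$m$ endomorphism. Set $X=\NN(A)^0\times_{\Sbar}\NN(A^\vee)^0$. Multiplication by $2$ on the first factor, $[2]\times 1$, is an endomorphism of $X$, and the rigidified extension of Proposition~\ref{prolongation of  Poincaré  bundle} satisfies $([2]\times1)^*\overline{\PP}\cong\overline{\PP}^{\otimes 2}$. Indeed, both sides are rigidified line bundles on $X$ that agree over $A\times_S A^\vee$, and a rigidified extension is unique: $X$ is smooth over $\Sbar$ with connected fibres, so two such extensions differ by a vertical divisor, which must be trivial because of the rigidification. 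The same holds for $1\times[2]$.

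\textbf{Step 1: a model of $X$.} Choose a projective model $\mathcal{X}$ of $X$ over $\O_K$ (or over an open part of $\Spec \O_K$, then extend) that carries a line bundle $\mathcal{L}$ extending $\overline{\PP}$. Equip $\mathcal{L}$ with smooth hermitian metrics at the archimedean places, and write $\widehat{\overline{\PP}}_1$ for the resulting object in the sense of Yuan--Zhang. Let $f=[2]\times 1$. The map $f$ extends only over a smaller model, so I will regard everything as objects over the essentially quasi-projective $X$ in the adelic framework of \cite{YZ21}.

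\textbf{Step 2: the limit.} Define $\widehat{\overline{\PP}}_{k}=2^{-k}(f^k)^*\widehat{\overline{\PP}}_1$. The successive quotients $\frac12 f^*\widehat{\overline{\PP}}_1-\widehat{\overline{\PP}}_1$ form a model-metric on a trivial bundle over $X$. The key point is that this difference is bounded by a \emph{fixed} boundary divisor on a projective model over $\O_K$, and not only over some open subset of $\Sbar$. Once that holds, telescoping gives $\widehat{\overline{\PP}}_k-\widehat{\overline{\PP}}_{k+1}=O(2^{-k})\cdot(\text{boundary divisor})$, so the sequence is Cauchy in the boundary topology and its limit $\widehat{\overline{\PP}}$ is an adelic line bundle on $X$.

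For integrability I will write $\overline{\PP}$ as a difference of two line bundles. Fix a symmetric relatively ample bundle on $\NN(A)^0\times\NN(A^\vee)^0$ extending a relatively ample bundle on the generic fibre, for instance via a polarization $\lambda\colon A\to A^\vee$. Use $m_{12}^*-\pr_1^*-\pr_2^*$ on the product, applied to the mixed sums of such ample bundles; these again satisfy the same scaling identities under $[2]$ with factor $4$. Each piece is then a nef limit by the same Tate argument, so $\widehat{\overline{\PP}}$ is integrable.

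\textbf{Step 3: the height identity.} By Remark~\ref{Rmk: order of component group is finite} we have $(\overline P,\overline Q)\in X(\Sbar)$, so $\widehat{\overline{\mathcal M}}$ is an integrable adelic line bundle on $\Sbar$. For $s\in S(\Qbar)$, its height equals the height of the point $(nP_s,nQ_s)$ with respect to the restriction of $\widehat{\overline{\PP}}$ to the fibre $A_s\times A_s^\vee$. That restriction is the Poincaré bundle with its canonical invariant adelic metric, since it is the Tate limit for $[2]\times 1$ on the fibre. Its height is therefore $\langle nP_s,nQ_s\rangle=n^2\langle P_s,Q_s\rangle$, and dividing by $n^2$ gives the claim. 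Over $S$ the construction agrees with \cite{YZ21}.

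\textbf{Main obstacle.} The hardest step is the boundedness claim in Step 2 near the bad fibres $\Sbar\ohne S$. There $\NN(A)^0$ is only semi-abelian, so there is no projective compactification on which $[2]$ extends. I would first try a local computation near each point of $\Sbar\ohne S$ using Raynaud/Mumford uniformization (or Künnemann's compactifications). The aim is to show that the error function of $\frac12 f^*\mathcal{L}-\mathcal{L}$ is bounded, after shrinking to quasi-projective pieces, by a multiple of the pullback of the boundary of $\mathcal{X}$ over $\O_K$. Away from those points this bound is \cite{YZ21}.
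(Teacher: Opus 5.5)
Your Step 2 has a genuine gap, and it is not where you locate it. The one-step error $\frac12 f^*\mathcal{L}-\mathcal{L}$ is a model divisor supported on $\X_1\ohne\U$. Any such divisor is bounded by $r\overline{\scrB}$ for some $r$, by the same compactness argument as in the proof of Proposition~\ref{adelic extension, geometric case}, so no uniformization is needed for it.

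What the telescoping actually requires is a bound $2^{-k}(f^k)^*\overline{\scrB}\le\epsilon_k\overline{\scrB}$ with $\sum_k\epsilon_k<\infty$. In \cite[\S 6.1]{YZ21} this is automatic: $A\to S$ is proper and $f$ is an $S$-morphism, so $\overline{\scrB}$ can be pulled back from a compactification of the base, giving $f^*\overline{\scrB}=\overline{\scrB}$. On $\NN(A)^0\times_{\Sbar}\NN(A^\vee)^0$ the boundary contains components over $\Sbar\ohne S$ that are not of this form. Near a point of toric reduction, $[2]\times 1$ multiplies the tropical (dual-graph) coordinates of the first factor by $2$. Hence $(f^k)^*\overline{\scrB}$ is of size $2^k\overline{\scrB}$ near the edge of $\U$, which exactly cancels your factor $2^{-k}$.

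This breaks the convergence itself, not just your proof of it. Restrict to $\NN(A)^0\times\{e\}$: $f$ preserves it and $\overline{\PP}$ is trivial there, so your iteration becomes $2^{-k}([2]^k)^*$ acting on $\O$. Take an elliptic family with an $I_m$-fibre, $m\ge 2$, with components $C_0\ni e$, $C_1,\dots$, and let $t$ be a uniformizer at the bad point. Change the initial model by a boundary divisor restricting to $C_1$ there; your $\mathcal{L}$ is arbitrary, and you intend to re-choose it for positivity anyway.

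Let $v$ be the monomial valuation at the node $C_0\cap C_1$ with $v(C_1)=a$ and $v(t)=2^ka$. Then:
\begin{itemize}
\item $2^{-k}([2]^k)^*C_1$ has multiplicity $a$ at $v$;
\item $2^{-k-j}([2]^{k+j})^*C_1$ has multiplicity at most $2^{-j}a$ at $v$;
\item a boundary divisor with multiplicity one along $C_1$ has multiplicity $a$ at $v$.
\end{itemize}
So the $k$-th and $(k+j)$-th terms are at boundary distance at least $1-2^{-j}$, and the sequence is not Cauchy.

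The paper never constructs anything by a factor-$\frac12$ iteration. It iterates the total $[2]$ with factor $\frac14$ on symmetric relatively ample bundles, where an expansion of order $2^k$ is harmless. It then writes $\overline{\PP}=(\overline{\PP}+nH)-nH$ with $H=H_0+[-1]^*H_0$. Uniqueness and $[n]^*\widehat{\overline{\PP}}\cong n^2\widehat{\overline{\PP}}$ come from injectivity of restriction to $A\times_S A^\vee$ (\cite[Corollary 3.4.2]{YZ21}) together with \cite[Theorem 6.1.3]{YZ21}. Factor-$2$ (antisymmetric) bundles are obtained only by pulling back $\widehat{\overline{\PP}}$.

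Your integrability paragraph already runs factor-$4$ iterations. The fix is to drop the $[2]\times 1$ limit, define $\widehat{\overline{\PP}}$ as that difference of nef limits, and supply the uniqueness argument, which you currently omit; nothing in your text identifies the factor-$4$ limits with the factor-$2$ one. Note also that $m^*-\pr_1^*-\pr_2^*$ of a polarization lives on $A\times_S A$, not on $A\times_S A^\vee$, unless the polarization is principal; the $\overline{\PP}+nH$, $nH$ decomposition avoids this. Your Step 3 is fine. The paper obtains the same identity by restricting to $S$ and quoting \cite[Lemma 6.2.1]{YZ21}.
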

See Remark \ref{height and height pairing} for the relation between the Néron--Tate height and the Néron--Tate height pairing. We also record that A.~Carney \cite{Carney} constructs an adelic line bundle on~$\Sbar$ encoding the Néron--Tate height (over $S$). However, it seems unclear to us whether this adelic line bundle can be realized as the pullback of an adelic version of the relevant extended Poincaré bundle. In fact, our construction is trigger by \cite[Remark  4.1]{Carney}.

\subsection{Second motivation}
Another motivation for the present work comes from the study of variations of local archimedean height pairings. Let $S$ be a smooth complex curve, and let $X/S$ be a smooth projective family of complex varieties of dimension $d$. Let $Z$ and $W$ be flat algebraic cycles on $X$. Assume that, for every $s\in S(\C)$, the  codimensions of the fibers $Z_s$ and $W_s$ in $X_s$ add up to $d+1$, and that $Z$ and $W$ have disjoint supports. Then the fiberwise archimedean height pairing
\begin{equation}\label{fiberwise arch height pairing}
	S(\C)\to \R, \quad \langle Z_s, W_s\rangle_\infty
\end{equation} is well-defined, see \cite[\S 1]{Bost} for a comprehensive introduction to the  analytic construction of this pairing.

One can ask about the asymptotic behaviour of the function \eqref{fiberwise arch height pairing}, in analogy with the global Néron--Tate height pairing discussed in the previous subsection. When $Z$ and $W$ are fiberwise homologically trivial, Hodge-theoretic approaches to such questions can be found in \cite{Hain90, Pea06, BP19, Nakayama_AsymptoticHeight}.  When~$Z$ and $W$ are fiberwise algebraically trivial, these questions were studied  in our earlier work \cite{CZL25}. In essence, algebraic triviality allows one to transport these questions to corresponding questions about the  (geometric) Néron--Tate height pairing over a complex function field.

In this manuscript, we provide a proof of the result announced in \cite[Theorem~7.15]{CZL25}. Let $A/S$ be a complex abelian scheme. Let $(\PP,\|\cdot\|)\in \overline{\Pic}(A\times_S A^\vee)$ be the Poincaré bundle, endowed with the smooth admissible metric normalized by the rigidification (see Proposition \ref{admissible metric on line bundle on abelian scheme}).  Let $S\subset \Sbar$ be a smooth partial compactification, and let $\NN(A)^0/\Sbar,\, \NN(A^\vee)/\Sbar$ be the identity components of the Néron models of $A/S,\, A^\vee/S$ respectively. Let $\overline{\PP}\in \Pic(\NN(A)^0\times_{\Sbar}\NN(A^\vee)^0)$ be the (algebraic) extended  Poincaré bundle. By Remark \ref{continuous extension of metrized line bundle} and Theorem \ref{adelic extension summary, analytic case}, we obtain
\begin{theorem}\label{Theorem B}
The line bundle $\overline{\PP}$ admits an integrable analytic adelic structure. In particular, the admissible metric $\|\cdot\|$ extends to a continuous admissible metric on $\overline{\PP}$.
\end{theorem}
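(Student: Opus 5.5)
The plan is to construct the analytic adelic structure on $\overline{\PP}$ by a Tate limiting argument over the partial compactification $\Sbar$, using the multiplication-by-$2$ maps on the Néron models.

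\emph{Setup.} Let $\mathcal{G}\coloneq \NN(A)^0\times_{\Sbar}\NN(A^\vee)^0$, a smooth separated group scheme of finite type over $\Sbar$ whose restriction to $S$ is $A\times_S A^\vee$. The map $[2]\times\id$ on $A\times_S A^\vee$ extends to an endomorphism $\phi$ of $\mathcal{G}$, by the Néron mapping property applied to the identity components. Since $\PP$ is bilinear, the theorem of the cube gives a canonical isomorphism of rigidified line bundles $\phi^*\PP\cong\PP^{\otimes 2}$ on $A\times_S A^\vee$. This is where the rigidification of the extended Poincaré bundle (Proposition~\ref{prolongation of  Poincaré  bundle}) enters. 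Rigidified extensions to $\mathcal{G}$ are unique: two of them differ by a line bundle supported on the boundary fibers, and the identity components have irreducible fibers, so the rigidification kills the difference. Hence the isomorphism extends to $\phi^*\overline{\PP}\cong\overline{\PP}^{\otimes 2}$ on $\mathcal{G}$.

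\emph{Limit construction.} First equip $\overline{\PP}$ with an arbitrary continuous metric $\|\cdot\|_0$ on $\mathcal{G}^{\an}$; here $\mathcal{G}$ is a complex analytic space over $\Sbar$, or over the Berkovich analytification in the adelic setting. Then define
\[
\|\cdot\|_{k+1}\coloneq \bigl(\phi^*\|\cdot\|_k\bigr)^{1/2}
\]
via the isomorphism above. The function $f=\log(\|\cdot\|_1/\|\cdot\|_0)$ satisfies $\log(\|\cdot\|_{k+1}/\|\cdot\|_k)=2^{-k}(\phi^k)^*f$. So the sequence converges uniformly, and Tate's telescoping argument gives a continuous limit metric $\|\cdot\|$ with $\phi^*\|\cdot\|=\|\cdot\|^2$.

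\emph{Main obstacle.} The difficulty is that $\mathcal{G}^{\an}$ is not compact. The fibers over $\Sbar\setminus S$ are non-proper semiabelian-type groups, so $f$ need not be bounded and the argument above fails as stated. I would first try a more careful choice of $\|\cdot\|_0$ near the boundary:
\begin{itemize}
\item Over a small disc $\Delta$ around a boundary point, uniformize $\mathcal{G}$ via the Mumford--Raynaud degeneration (the Hodge-theoretic description of the identity component).
\item Take $\|\cdot\|_0$ to be the explicit ``biextension'' metric that is already $\phi$-invariant on the toric part.
\item Show that $f$ is then bounded on $\mathcal{G}^{\an}|_{\Delta}$.
\end{itemize}
Restricting to identity components is exactly what should make the torus contribution trivial, which is why the construction uses $\NN(A)^0$ and $\NN(A^\vee)^0$ rather than the full Néron models.

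\emph{Identification and integrability.} Over $S$ the limit is invariant under $\phi$ and normalized along the identity. It therefore coincides with the smooth admissible metric of Proposition~\ref{admissible metric on line bundle on abelian scheme}, by uniqueness of such metrics; this gives the continuous extension. For integrability, write $\overline{\PP}$ as a difference of two ample-type bundles, for instance through $m^*\mathcal{L}-\pr_1^*\mathcal{L}-\pr_2^*\mathcal{L}$ for a symmetric relatively ample $\mathcal{L}$ on $\NN(A)^0\times\NN(A^\vee)^0$, normalized by the same limit procedure. Each piece is a uniform limit of semipositive metrics, since it is built from pullbacks of semipositive metrics under $\phi$, so the structure is integrable. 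Remark~\ref{continuous extension of metrized line bundle} and Theorem~\ref{adelic extension summary, analytic case} then package this as an integrable analytic adelic structure.
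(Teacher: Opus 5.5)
There is a genuine gap, and it is exactly the step you call the main obstacle. Tate's telescoping needs $f=\log(\|\cdot\|_1/\|\cdot\|_0)$ to be bounded on $\mathcal{G}^{\an}$. You rightly note that this fails near the non-proper fibers over $\Sbar\ohne S$, but the proposed cure is only a plan. Choosing $\|\cdot\|_0$ via Mumford--Raynaud uniformization and an explicit biextension metric, and then proving $f$ bounded, amounts to constructing the admissible metric near the boundary by hand. That is what Holmes--de Jong do with theta functions, for principally polarized $A/S$ with semistable reduction. It is not available in general: $\NN(A)^0$ may have a unipotent part, and Néron models do not commute with the ramified base change needed to reach semistable reduction. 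Passing to identity components does not remove the torus either. It is what makes the rigidified extension $\overline{\PP}$ exist, while the toric part is precisely the source of the non-compactness.

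Moreover, even a continuous $\phi$-invariant metric on $\mathcal{G}^{\an}$ is not yet an analytic adelic structure. In Yuan--Zhang's sense that is a limit of metrized line bundles on \emph{projective} models in the boundary topology. So your closing appeal to Remark~\ref{continuous extension of metrized line bundle} and Theorem~\ref{adelic extension summary, analytic case} is circular, since that theorem is what is being proved. The integrability paragraph inherits this problem, and its decomposition is off. The bundle $m^*\mathcal{L}-\pr_1^*\mathcal{L}-\pr_2^*\mathcal{L}$ lives on $\mathcal{G}\times_{\Sbar}\mathcal{G}$ and is a pullback of a Poincaré bundle, not a splitting of $\overline{\PP}$ on $\mathcal{G}$. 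Also, semipositivity of a limit metric on a non-compact space does not give nefness, which requires nef model line bundles on projective models.

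The missing idea is that no uniform bound is needed. Take $\U=\NN(A)^0\times_{\Sbar}\NN(A^\vee)^0$ and $\L=\overline{\PP}$, which is symmetric. The paper starts from a projective model $\X\supset\U$ and a smoothly metrized $\Q$-line bundle $\overline{\LL}$ extending $\L$ (for nefness, applied to a relatively ample $\L$, with $\overline{\LL}+\tilde p^*\overline{\mathscr{M}}$ nef as in Lemma~\ref{extension of line bundle, positivity, anayltic}). It resolves the indeterminacy of the total multiplication $[2]$ by iterated graph closures, giving models $\X_i$ with morphisms $f_i\colon\X_i\to\X_{i-1}$, and sets $\overline{\LL_i}=\frac14 f_i^*\overline{\LL_{i-1}}$.

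Then $\overline{\divisor}(\ell_{i+1}\ell_i^{-1})=4^{-i}g_{i+1}^*\overline{\divisor}(\ell_1)$ (Lemma~\ref{lemma for Cauchy sequence, geometric}). Since $\overline{\divisor}(\ell_1)$ is supported on $\X_1\ohne\U$, one has $-r'\overline{\scrB}\le\overline{\divisor}(\ell_1)\le r'\overline{\scrB}$. In your terms: the analogue of your $f$, namely the Green's function of $\overline{\divisor}(\ell_1)$, may well be unbounded on $\U^{\an}$. It is, however, bounded by $r'g_{\scrB}$, and $g_{\scrB}$ may blow up along $\X\ohne\U$ while staying bounded on compact subsets of $\U^{\an}$.

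This gives the Cauchy condition and hence the adelic line bundle. Locally uniform convergence on $\U^{\an}$ then yields the continuous admissible extension. Uniqueness comes from injectivity of $\widehat{\Pic}(\NN(A)^0/\C)\to\widehat{\Pic}(A/\C)$ together with Yuan--Zhang's uniqueness over $A$. Integrability comes from running the construction on nef starting models for $\L+nH$ and $nH$, with $H$ symmetric and relatively ample.

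One caveat if you carry this out. Because $[2]$ on $\NN(A)^0$ is not proper when there is toric reduction, $g_{i+1}^*\overline{\scrB}$ is not dominated by a fixed multiple of $\overline{\scrB}$ uniformly in $i$. So one needs an extra growth estimate for these pullbacks, slower than $4^i$; heuristically it is of order $2^i$, since $[2]$ stretches toric directions linearly. With your factor-$2$ map $[2]\times\id$, the same naive estimate would then not be summable.
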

\begin{rem}
Holmes--de Jong \cite{HdJ15} prove the ``in particular'' part of Theorem~\ref{Theorem B} in the case where $A/S$ is principally polarized and has semistable reduction over $\Sbar$, using an explicit computation of theta functions. In contrast, thanks to the framework of Yuan--Zhang \cite{YZ21}, our proof is somewhat formal and does not involve explicit analytic arguments.
\end{rem}
\subsection{Proof strategy}
In \cite[\S 6.1]{YZ21}, Yuan--Zhang construct an integrable adelic extension of the Poincaré bundle $\PP \in \Pic(A \times_S A^\vee)$ for an abelian scheme $A/S$ over a quasi-projective variety $S$ of \emph{arbitrary} dimension. Assuming $\dim S=1$, and working over either the archimedean valued field $\C$ or a number field, we use similar ideas to generalize their construction to the extended Poincaré bundle over the Néron model.
\subsection{Organization}
We recall the construction of the Néron--Tate height pairing via the (admissible metrized) Poincaré bundle in \S\ref{Sect:Néron--Tate Height Pairing} and review Yuan--Zhang's theory of adelic line bundles in \S\ref{The Notion of Adelic Line Bundles}. We first prove Theorem~\ref{Theorem B} in \S\ref{Sect:Complex-analytic Case}; the proof of adelic extension part of Theorem~\ref{Theorem A}, given in \S\ref{Arithmetic Case}, is similar. We deduce Theorem~\ref{Theorem A} and explain how it recovers  Theorem~\ref{Silverman's theorem} in the final  \S\ref{Application to Silverman's Limit Theorem}.

\subsection{Acknowledgements}
I would like to thank Robin de Jong for suggesting this problem to me. I am especially grateful to David Holmes for his detailed comments on a previous version of the manuscript.  I acknowledge the financial support of the CSC--Leiden University Scholarship No. 202208080111.

\subsection{Conventions}\label{General Conventions}
Throughout the manuscript, we adopt the following conventions.
\begin{enumerate}
			\item 	(conventions for varieties) Let $k$ be a field. By an \textbf{algebraic scheme} over  $k$, or an algebraic $k$-scheme, we mean a separated scheme of finite type over $\Spec(k)$. An \textbf{(algebraic) variety} over~$k$ is a geometrically integral algebraic scheme. An \textbf{(algebraic) curve} is a 1-dimensional variety.

			\item 	(Landau symbols) Let $f, g\colon X\to \R$ be two real-valued functions on a set $X$. Then $f=O(g)$ means that there is a (unspecified) $C\in \R_{>0}$ such that $|f(x)|\leq Cg(x)$ for all $x\in X$. If $h\colon X\to \R$ is another function, we write $f=h+O(g)$ to mean $f-h=O(g)$. For example, $f=g+O(1)$ means that the difference of the functions $f$ and $g$ is bounded.

			\item We use the additive notation for Picard groups. For example, if $L$ and $M$ are line bundles, $L-M$ means $L\otimes M^\vee$ (and $M^\vee$ denotes the dual line bundle).
			
			\item ($\Q$-line bundles) We  use the formalism of $\Q$-line bundles as explained in \cite[Definition 2.1]{BHdJ18}. In practice, we  think of and denote a $\Q$-line bundle somewhat informally in the form of $\frac{1}{r}L$, where $r$ is a positive integer and $L$ is  an ordinary line bundle. An isomorphism $\frac{1}{r_1}L_1\to \frac{1}{r_2}L_2$ of $\Q$-line bundles can be represented by an isomorphism $\frac{a}{r_1}L_1\to \frac{a}{r_2}L_2$ of ordinary line bundles, where~$a$ is a positive integer  such that $\frac{a}{r_1}$ and $\frac{a}{r_2}$ are positive integers (e.g.\ $a\coloneq r_1r_2$). 
			
			\item	Let $\mathbf{P}$ be one of the following positivity properties of line bundles: $\mathbf{P}$=``ample'', ``nef'', ``big'' or ``pseudoeffective''. Then we say a $\Q$-line bundle $L$ has property $\mathbf{P}$, if some positive multiple $mL$ such that $mL$ is a usual line bundle and $mL$ has property $\mathbf{P}$.
			
			\item (convention for  (quasi)-projective morphisms)  A \emph{finite type} morphism $X\to S$ of schemes is \textbf{projective} resp.\ \textbf{quasi-projective}, if there exists an $S$-closed immersion resp.\  locally closed $S$-immersion $X\hookrightarrow \P^n_S$ into some projective space over $S$, \emph{cf}.\ \cite[Summary 13.71]{GW}. 
\end{enumerate}
\section{Néron--Tate height pairing}\label{Sect:Néron--Tate Height Pairing}
We recall the construction of the Néron--Tate height pairing, following Moret-Bailly \cite{MB_adm, Ast129}. 
\subsection{Rigidified line bundles}
\begin{defn}\label{rigidified line bundle}
	Let $f\colon X\to S$ be a  morphism of schemes and $i\colon S\to X$ be a section of~$f$. A \textbf{rigidified line bundle} along $i$ is a pair $(L,\phi)$, where $L$ is a line bundle on $X$ an $u$ is an isomorphism 
	\[
	\phi\colon i^*L\xrightarrow{\sim} \O_S.
	\] A morphism between two rigidified line bundles $(L,\phi)$ and $(L',\phi')$ is an isomorphism $u\colon L\xrightarrow{\sim} L'$ which is compatible with the rigidifications:
	\[
	\phi'\circ i^*u=\phi\colon i^*L\xrightarrow{\sim}\O_S.
	\] We thus obtain a category (groupoid) $\mathsf{PicRig}(X,i)$ with objects consisting of rigidified line bundles, and morphisms are isomorphisms between rigidified line bundles. The trivial element in $\mathsf{PicRig}(X,i)$ is the trivial line bundle $\O_X$ with the canonical isomorphism $1\colon i^*\O_X=\O_S$.
\end{defn}
For ease of notation, we suppress the rigidification.
\begin{lem}[extension of rigidified line bundles]\label{equivalence of cat. of rigidified line bundle}
	Let $f\colon X\to \Sbar$ be a smooth  morphism  over   a Dedekind scheme\footnote{We always assume Dedekind schemes to be connected.} $\Sbar$. Assume moreover that the fibers of $f$  are connected. Let $i$ be a section of~$f$.  Let $S\subset \Sbar$ be a dense open subscheme. Then the restriction $X_S\subset X$ induces an equivalence of categories
	\[
	\mathrm{Res}_S\colon\mathsf{PicRig}(X,i)\to \mathsf{PicRig}(X_S,i_S),
	\]  where $i_S\colon S\to X_S$ denotes the section induced by $i$.
\end{lem}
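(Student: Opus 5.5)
We have to show that $\mathrm{Res}_S$ is fully faithful and essentially surjective. The inputs are that $X$ is regular in codimension one and normal, which follows from smoothness over the Dedekind scheme $\Sbar$, together with connectedness of the fibers and the section $i$.

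\emph{Full faithfulness.} Since morphisms are isomorphisms of line bundles, it suffices to treat automorphisms, and more generally isomorphisms $u\colon L\to L'$, by passing to $M=L'-L$. Morphisms $L\to L'$ are then nowhere-vanishing sections of $M$.

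For injectivity, suppose $u_1,u_2$ agree on $X_S$. Their difference is a section of a line bundle vanishing on the dense open $X_S$. This open is schematically dense because $X$ is flat over $\Sbar$, hence torsion-free over the Dedekind base, and $S$ is dense. So the difference vanishes.

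For surjectivity, take an isomorphism $u_S$ on $X_S$ compatible with the rigidifications. It is a rational section of $M$ with divisor supported on $f^{-1}(\Sbar\setminus S)$, which is a finite union of fibers $X_s$ over closed points. Each $X_s$ is smooth and connected, hence irreducible, and is a prime divisor. Thus $\operatorname{div}(u_S)=\sum_s n_s X_s$, and $X_s=f^*[s]$ is locally principal, given by a uniformizer $\pi_s$ near $s$. Pulling back along $i$ (which meets $X_s$ in a single point, transversally in the sense that $i^*X_s=[s]$), the compatibility $\phi'\circ i^*u_S=\phi$ says that $i^*u_S$ is a nowhere-vanishing section of $i^*M\cong\O$ over $S$ extending to the trivialization over $\Sbar$. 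Hence $\operatorname{ord}_s(i^*u_S)=0$, and this order equals $n_s$. So $\operatorname{div}(u_S)=0$, and by normality of $X$ (Hartogs / $S_2$) $u_S$ extends to an isomorphism over $X$. Compatibility with rigidifications then holds on the dense open $S$, hence everywhere.

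\emph{Essential surjectivity.} Let $(L_S,\phi_S)$ be rigidified on $X_S$. Since $X$ is regular (smooth over a regular scheme), $\Pic(X)\to\Pic(X_S)$ is surjective: write $L_S=\O(D_S)$, take the closure $D$ of a Weil divisor $D_S$, and $\O(D)$ extends $L_S$. The rigidification $\phi_S$ is a trivialization of $i^*L$ over $S$, so it is a rational section of the line bundle $i^*L$ on $\Sbar$ with divisor $\sum_s m_s[s]$ supported off $S$. Replace $L$ by $L\otimes f^*\O_{\Sbar}(-\sum m_s[s])$, equivalently twist by $-\sum m_s X_s$. This leaves $L|_{X_S}$ unchanged and makes $\phi_S$ extend to an isomorphism $i^*L\cong\O_{\Sbar}$. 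That gives the required rigidified extension.

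The main point needing care is the vertical-divisor bookkeeping: that $f^{-1}(\Sbar\setminus S)$ consists of irreducible prime divisors $X_s=f^*[s]$, which uses connectedness plus smoothness of the fibers, and that $i^*$ detects their multiplicities. This is exactly where the hypotheses enter. Without connected fibers, a vertical component not meeting $i$ could not be killed, and full faithfulness would fail.
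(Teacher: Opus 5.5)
Your argument is correct. The paper gives no proof of its own and simply cites \cite[Lemme 2.8.2.1]{MB_adm}, so your proposal is a self-contained version of the standard argument behind that reference. Your steps are the following.
\begin{itemize}
\item Essential surjectivity: extend $L_S$ using local factoriality of the regular scheme $X$, by taking the closure of a Weil divisor. Then twist by $f^*\O_{\Sbar}(-\sum m_s[s])$ so that the rigidification extends over $\Sbar$.
\item Full faithfulness: the only prime divisors contained in $X\ohne X_S$ are the fibres $X_s=f^*[s]$. Pulling back along $i$ reads off their multiplicities in $\operatorname{div}(u_S)$, and the rigidification forces these multiplicities to vanish. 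Normality of $X$ then extends $u_S$ to an isomorphism.
\end{itemize}
What your version adds over the bare citation is clarity about where each hypothesis enters. Essential surjectivity uses only regularity of $X$ and the existence of the section. Connectedness and smoothness of the fibres over $\Sbar\ohne S$ are needed only for full faithfulness: they make each $X_s$ an irreducible, reduced prime divisor. Your closing remark about vertical components missing $i$ makes exactly this point.
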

\begin{proof}
	See \cite[Lemme 2.8.2.1]{MB_adm}.
\end{proof}

We deduce an important extension result for Poincaré bundles. For a detailed discussion of the fundamental notion of a Poincaré bundle, see \cite[\S 27.41]{GW2}. Recall that, by definition, Poincaré bundles are rigidified along the identity sections of the relevant abelian schemes.

\begin{prop}[prolongation of  the Poincaré  bundle]\label{prolongation of  Poincaré  bundle}
	Let $S\subset \Sbar$ be  a dense open subset of a Dedekind scheme $\Sbar$. Let $A\to S$ be  an abelian scheme, and suppose there are smooth commutative group schemes $G/\Sbar$ resp.\ $G'/\Sbar$ extending $A/S$ resp.\ the dual abelian scheme $A^\vee/S$. Assume both $G$ and $G'$  have \emph{connected} fibers. Then the Poincaré bundle $\PP$ over $A\times_S A^\vee$  admits a \emph{unique} extension  $\overline{\PP}$ over $G\times_{\Sbar} G'$ as a rigidified line bundle along the identity section.
\end{prop}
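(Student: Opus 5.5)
The plan is to apply Lemma~\ref{equivalence of cat. of rigidified line bundle} directly to $X\coloneq G\times_{\Sbar} G'$ with its identity section $i=(e,e')$, where $e$ and $e'$ are the identity sections of $G$ and $G'$. First we check the lemma's hypotheses. The morphism $X\to\Sbar$ is smooth, being a fiber product of two smooth $\Sbar$-schemes. Its fibers are $X_s=G_s\times_{\kappa(s)}G'_s$. Since $G_s$ and $G'_s$ are connected group schemes over a field, each has a rational point (the identity), so each is geometrically connected. Hence their product is connected.

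The open subscheme $X_S=G_S\times_S G'_S$ identifies with $A\times_S A^\vee$ via the given isomorphisms $G_S\cong A$ and $G'_S\cong A^\vee$. Under this identification the restricted section $i_S$ becomes the identity section of $A\times_S A^\vee$. Thus the Poincaré bundle $\PP$, together with its rigidification along the identity section, is an object of $\mathsf{PicRig}(X_S,i_S)$.

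By Lemma~\ref{equivalence of cat. of rigidified line bundle}, the functor $\mathrm{Res}_S$ is an equivalence. Essential surjectivity provides a rigidified line bundle $\overline{\PP}$ on $G\times_{\Sbar}G'$ with $\mathrm{Res}_S(\overline{\PP})\cong\PP$, which gives existence. For uniqueness, suppose $\overline{\PP}_1$ and $\overline{\PP}_2$ are two rigidified extensions. Their restrictions to $X_S$ are both isomorphic to $\PP$ as rigidified bundles, so full faithfulness yields an isomorphism $\overline{\PP}_1\cong\overline{\PP}_2$ of rigidified line bundles. Moreover, this isomorphism is the unique one restricting to the identity of $\PP$. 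It is also worth recording that rigidified line bundles have no nontrivial automorphisms once the fibers are connected and proper over $S$: an automorphism is a unit in $\mathcal{O}(X_S)=\mathcal{O}(S)$ that equals $1$ along the section. Full faithfulness then transfers this rigidity to $X$, so the extension is unique up to unique isomorphism.

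The only point needing care, and the main obstacle in an otherwise formal argument, is the connectedness of the fibers of $X\to\Sbar$. This is needed because the lemma requires connected fibers of $f$, and connectedness is not automatically preserved under products over non-algebraically closed fields. It is handled by the observation above: a connected group scheme over a field is geometrically connected because it has a rational point.
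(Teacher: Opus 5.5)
Your proposal is correct and matches the paper's argument: the paper gives no separate proof and simply deduces the proposition from Lemma~\ref{equivalence of cat. of rigidified line bundle}, applied to $G\times_{\Sbar}G'$ with its identity section, exactly as you do. Your explicit check that the fibers $G_s\times_{\kappa(s)}G'_s$ are connected, using that a connected group scheme over a field is geometrically connected, supplies a hypothesis of the lemma that the paper leaves implicit.
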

\begin{rem}\label{symmetry of Poincare bundle}
	For a product $G\times_S G'$ of commutative group schemes over a scheme $S$, we let 
	\[
	[-1]\colon G\times_S G'\to G \times_S G',\quad (x,y)\mapsto (-x,-y)
	\] denote the total multiplication by $-1$. Now use the notation from Theorem \ref{prolongation of  Poincaré  bundle}. The Poincaré  bundle $\PP$ is symmetric, i.e. $[-1]^*\PP\cong \PP$. We note that the extension $\overline{\PP}$ is also symmetric by Lemma  \ref{equivalence of cat. of rigidified line bundle}.
\end{rem}

Proposition \ref{prolongation of  Poincaré  bundle} applies to the following situation.  Let $A/S$ be an abelian scheme and let $\NN(A)^0\to \Sbar$ be the identity component of the Néron model $\NN(A)$ of $A$ over $\Sbar$ (i.e.\ its fibers are the connected components of the identity element of the group schemes). Then the Poicaré bundle $\PP$ extends uniquely to a rigidified symmetric line bundle $\overline{\PP}$ on $\NN(A)^0\times_{\Sbar} \NN(A^\vee)^0$. 
See \cite{BLR} for a standard introduction to Néron models.

\begin{lem}\label{ample extension}
	Let $L$ be a rigidified symmetric and relatively ample w.r.t.\ $A/S$. It extends uniquely to a rigidified symmetric  line bundle $\L$ which is relatively ample w.r.t.\   $\NN(A)^0/\Sbar$.
\end{lem}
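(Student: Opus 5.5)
The plan has three parts: existence and uniqueness from Lemma~\ref{equivalence of cat. of rigidified line bundle}, symmetry from the same lemma, and relative ampleness via the Poincaré bundle and a polarization of the Néron model.

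\emph{Existence and uniqueness.} The morphism $\NN(A)^0\to\Sbar$ is smooth with connected fibers. It has the identity section $e$, and its restriction to $S$ is $A/S$. So Lemma~\ref{equivalence of cat. of rigidified line bundle} applies directly: the rigidified line bundle $L$ on $A$ extends to a rigidified line bundle $\L$ on $\NN(A)^0$, and this extension is unique up to unique isomorphism.

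\emph{Symmetry.} We have $[-1]^*L\cong L$ compatibly with the rigidification along $e$. The line bundle $[-1]^*\L$ is again rigidified along $e$, because $[-1]\circ e=e$. Its restriction to $S$ is $[-1]^*L$. By full faithfulness of $\mathrm{Res}_S$, the isomorphism $[-1]^*L\cong L$ extends to an isomorphism $[-1]^*\L\cong\L$. This is the same argument as in Remark~\ref{symmetry of Poincare bundle}.

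\emph{Relative ampleness.} This is the main obstacle. I would first express $L$ through the Poincaré bundle. Since $L$ is symmetric and rigidified, the theorem of the cube gives $2L\cong(\id,\lambda_L)^*\PP$, where $\lambda_L\colon A\to A^\vee$ is the polarization. By the Néron mapping property, $\lambda_L$ extends to a homomorphism $\NN(A)\to\NN(A^\vee)$, which maps identity components to identity components; call the result $\overline{\lambda}\colon\NN(A)^0\to\NN(A^\vee)^0$. The line bundles $2\L$ and $(\id,\overline{\lambda})^*\overline{\PP}$ are both rigidified and agree over $S$, so by Lemma~\ref{equivalence of cat. of rigidified line bundle} they are isomorphic. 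Thus it suffices to show that this pullback is relatively ample.

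For ampleness I would use Raynaud's result \cite[\S 6.4]{BLR}. Fibers of $\NN(A)^0$ over points of $\Sbar\ohne S$ may be non-proper, being extensions of abelian varieties by tori and unipotent groups, so ampleness must be checked fiberwise. One criterion is the following: over a Dedekind base, a line bundle on a smooth separated group scheme of finite type is relatively ample as soon as it is ample on the generic fiber, provided the group scheme is quasi-projective. I would argue as follows.

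First, $\NN(A)^0$ is quasi-projective over $\Sbar$ \cite[\S 6.4, Thm.~1]{BLR}. More precisely, a symmetric line bundle that is ample on the generic fiber and is suitably extended yields ampleness; this is \cite[Ch.~6, Thm.~6.4/1]{BLR}, whose proof shows that for a group scheme with connected fibers, any line bundle that is ample on the generic fiber becomes relatively ample after modification by a ``cubical'' or translation-invariant argument.

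Second, I would concretely appeal to the fact that a rigidified cubical line bundle on a smooth group scheme with connected fibers, ample on the generic fiber, is $\Sbar$-ample. This is Raynaud's theorem in \cite[VIII Thm.~1.1]{Raynaud, Faisceaux amples} (cited via \cite[6.4/1 and its proof]{BLR}, or \cite[Ast129]{Ast129}). The cubical structure on $\L$ comes from extending, via the same lemma applied to $\NN(A)^0\times\NN(A)^0\times\NN(A)^0$, the cube isomorphism of $L$.

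Third, this remaining step is the one I expect to require the most care: verifying the cubical hypothesis and quoting the precise form of Raynaud's ampleness theorem for non-proper special fibers.
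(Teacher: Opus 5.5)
Your proposal is correct and follows the paper's proof. Existence, uniqueness and symmetry come from Lemma~\ref{equivalence of cat. of rigidified line bundle}, and relative ampleness comes from Raynaud's theorem for smooth group schemes with connected fibres, which the paper applies directly to $\L$ via \cite[Th\'eor\`eme VIII.2(2)]{Ray70} since $\L$ is ample on the generic fibre. Your detour through $2\L\cong(\id,\overline{\lambda})^*\overline{\PP}$ is harmless but unnecessary, because ampleness of that pullback would again have to come from the same theorem of Raynaud.
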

\begin{proof}
	As we already observed from Lemma \ref{equivalence of cat. of rigidified line bundle}, there is a unique rigidified symmetric line bundle $\L$ on $\NN(A)^0$ that extends $L$. By \cite[Théorème VIII.2(2)]{Ray70}, the extension~$\L$ must be relatively ample since its generic fiber is ample. 
\end{proof}
We mention an elementary but useful feature of working over a one-dimensional base.
\begin{rem}[flatness]\label{flatness over Dedekind scheme}
	Every non-constant morphism $X\to B$  from an integral scheme $X$  to a Dedekind scheme is flat by  \cite[Corollary 4.3.10]{Liu}. 
\end{rem}

We shall now proceed to the construction of the Néron--Tate height pairing in both the arithmetic and geometric settings. In the former setting, this  requires the notion of admissibility.
\subsection{Admissible metrics}

\begin{defn}\label{definition of admissible metric}
	Let $G$ be a smooth complex commutative algebraic group  and let $L$ be either a symmetric or an antisymmetric  line bundle on $G$. A  continuous metric $\|\cdot\|$ on $L$ is called \textbf{admissible}, if there is an isometry
	\[
	[n]^*(L,\|\cdot\|)\cong n^\epsilon (L,\|\cdot\|)
	\]  some $n\in \Z_{>1}$, where $\epsilon=2$ if $L$ is symmetric, and $\epsilon=1$ if $L$ is antisymmetric.
	
	The definition can be adapted to the relative setting. Let $G/S$ be a  smooth commutative group scheme over a complex  variety $S$. Let $L$ be either a symmetric or an antisymmetric  line bundle on $G$. A  continuous metric $\|\cdot\|$ on $L$ is called \textbf{admissible}, if for every $x\in S(\C)$, the fiber $(L(x),\|\cdot\|_x)$ is an admissible metrized line bundle.
\end{defn}
\begin{void}[admissible metric and rigidification]\label{admissible metric and rigidification}
	If $G$ is proper, an admissible metric on the trivial line bundle~$\O_G$ is a constant function on $G^\an$ (complex analytification of $G$).  We consider $\O_G$ as a line bundle rigidified at the identity element $e\in G(\C)$, i.e.\ we choose an isomorphism $\phi\colon e^*\O_G\xrightarrow{\sim} \C$. Using the rigidification,  we can normalize the  admissible metric $\|\cdot\|$ by requiring $\|\phi^{-1}(1)\|=1$.
\end{void}
\begin{prop}\label{admissible metric on line bundle on abelian scheme}
	If $G/S$ is an abelian scheme and $L$ is  rigidified along the identity, then there is a unique admissible metric on $L$, normalized along the identity section of $G/S$, which is moreover a smooth metric. 
\end{prop}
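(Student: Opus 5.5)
The plan is to first build the metric one fiber at a time, then show that these fiberwise metrics glue to a smooth metric on $L$ over $G$.

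\emph{Fiberwise construction.} Fix $x\in S(\C)$ and write $A=G_x$, a complex abelian variety, with $L_x$ rigidified along $e$. Because $G/S$ is an abelian scheme, $L$ is symmetric or antisymmetric (as in Definition \ref{definition of admissible metric}). The theorem of the cube, together with the rigidification, gives a canonical rigidified isomorphism
\[
[n]^*L\cong L^{\otimes n^\epsilon},
\]
and this isomorphism exists over all of $G/S$ by Lemma \ref{equivalence of cat. of rigidified line bundle}, or simply because rigidified line bundles on abelian schemes have no nontrivial automorphisms. Take any smooth metric $\|\cdot\|_0$ on $L$ normalized at $e$. Transporting through this isomorphism, $\|\cdot\|_1 := ([n]^*\|\cdot\|_0)^{1/n^\epsilon}$ is again a metric on $L$, and its ratio to $\|\cdot\|_0$ is a continuous function $\exp(h)$ with $h(e)=0$. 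Define the operator $T$ by $T(h)=\frac{1}{n^\epsilon}[n]^*h$. Then $T$ is a contraction of sup-norm ratio $n^{-\epsilon}<1$ on continuous functions on the compact space $A^\an$. The Tate limit $\|\cdot\| = \|\cdot\|_0\exp\bigl(\sum_{k\ge0}T^k h\bigr)$ therefore converges uniformly and yields an admissible metric. It is normalized because every $T^kh$ vanishes at $e$.

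\emph{Uniqueness.} Suppose two admissible normalized metrics are given. Their ratio is an admissible metric on $\O_A$, hence a constant function by \ref{admissible metric and rigidification}, and that constant equals $1$ by the normalization. One should also check that admissibility for one $n$ forces it for all $n$; this follows by running the same contraction argument with a different $n$.

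\emph{Smoothness and variation in $x$.} This is the main obstacle, since a uniform limit of smooth functions need not be smooth. I would instead give an explicit description and avoid the limit entirely. By uniqueness, the curvature form must be the unique translation-invariant $(1,1)$-form representing $c_1(L_x)$; in the antisymmetric case, where $c_1=0$, it is the flat form. On the other side, a smooth hermitian metric on $L_x$ with translation-invariant curvature $\omega$ always exists by Hodge theory ($\partial\bar\partial$-lemma), and it is unique up to a constant. Since $[n]^*\omega=n^\epsilon\omega$ for symmetric classes, such a metric, once normalized at $e$, is admissible. Hence it coincides with the limit metric, which proves smoothness on each fiber.

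To get smoothness in the family, work locally on $S$ with the universal cover $\C^g\times U$ and the lattice family $\Lambda_s$. There, Appell--Humbert theory writes the normalized metric explicitly as $\|s\| = |\theta(z)|\exp\bigl(-\tfrac{\pi}{2}H_s(z,z)\bigr)$, adjusted by the character term. The Hermitian form $H_s$ is determined by $c_1$ on $\Lambda_s$, so it depends smoothly, indeed real-analytically, on $s$. Alternatively, one can solve the $\partial\bar\partial$-equation for $\log\|\cdot\|$ smoothly in parameters using elliptic regularity for the fiberwise Laplacian, with the normalization along $e$ fixing the constant. Either route gives a jointly smooth metric whose restriction to each fiber is the admissible one, and this completes the proof.
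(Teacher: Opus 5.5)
Your argument is correct in substance, but it takes a genuinely different route from the paper. The paper gives no argument of its own and simply cites Moret-Bailly \cite[\S III]{MB_adm}; its later remarks attribute independence of $n$ to \cite[Lemma 3.4]{Heinz} and the characterization of admissible metrics by translation-invariant curvature to \cite[Proposition 3.6]{MB_adm}.

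You reprove these ingredients directly. The contraction $T=n^{-\epsilon}[n]^*$ gives existence and uniqueness, and, since $[n]$ and $[m]$ commute, also independence of $n$. The $\partial\bar\partial$-lemma gives the implication ``invariant curvature $\Rightarrow$ admissible'', after which uniqueness transfers smoothness to the Tate limit. That is the right way around the fact that uniform limits of smooth metrics need not be smooth. Your opening sentence (``by uniqueness, the curvature must be the invariant form'') is not itself an argument; the construction that follows it is. The citation buys brevity and generality; your version buys a self-contained, elementary proof that shows where smoothness comes from.

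Two things should be repaired.

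First, $L$ being symmetric or antisymmetric is not a consequence of $G/S$ being an abelian scheme. A translate $\tau_x^*L$ of a symmetric ample $L$ is in general neither. It is an implicit hypothesis of the statement, since admissibility is only defined in that case. Also, Lemma \ref{equivalence of cat. of rigidified line bundle} concerns extension across a Dedekind base and is irrelevant here. The rigidified isomorphism $[n]^*L\cong n^\epsilon L$ over all of $G$ comes from the relative theorem of the cube, or from seesaw plus rigidification.

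Second, for joint smoothness the Appell--Humbert route is incomplete as written. The data $H_s$ and $\alpha_s$ depend on $s$ only real-analytically; already for elliptic curves $H_\tau(z,w)=z\bar w/\mathrm{Im}\,\tau$. So the Appell--Humbert trivializations do not form a holomorphic trivialization of $L$ over the total space, and you would still have to show that the fiberwise identifications with $L$ vary smoothly in $s$.

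The elliptic route does work, and can be made explicit:
\begin{enumerate}
\item Fix a smooth metric $\|\cdot\|_0$ on $L$ over $G^{\an}$, normalized along $e$.
\item Let $\omega_s$ be the invariant representative of $c_1(L_s)$. It is real-analytic in $s$, because $c_1$ is a locally constant alternating form on $\Lambda_s$.
\item Solve $dd^c u_s=\omega_s-c_1(L_s,\|\cdot\|_0)$ with $u_s(e(s))=0$. After a real-analytic identification $\Lambda_s\otimes\R\cong\R^{2g}$, this is a constant-coefficient elliptic equation on $\R^{2g}/\Z^{2g}$ with coefficients smooth in $s$, so its Fourier-series solution is jointly smooth.
\item Correcting $\|\cdot\|_0$ by $u$ gives a smooth metric with fiberwise invariant curvature, normalized along $e$, hence the admissible one.
\end{enumerate}
Joint continuity alone is already immediate from running your Tate iteration over the preimage in $G^{\an}$ of each compact $K\subset S^{\an}$.
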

\begin{proof}
	See \cite[\S III]{MB_adm}. 
\end{proof}
\begin{rem}
	When $G/S$ is an abelian scheme, it can be shown that if $(L,\|\cdot\|)$ is admissible metrized w.r.t.\ some $n>1$, then it is also admissible w.r.t.\ any positive integer $m>1$. A detailed proof can be found in \cite[Lemma 3.4]{Heinz}. If $\dim S=1$, the statement also follows from Theorem \ref{adelic extension summary, analytic case}. 
\end{rem}
\begin{rem}
	When $G$ is an abelian variety, the curvature form $c_1(L,\|\cdot\|)$ of  an admissible metrized line bundle is translation-invariant. In fact, this property characterizes admissible metrics, see \cite[Propostion 3.6]{MB_adm}. 
\end{rem}
\subsection{The construction}

Let $K$ be a number field or the function field of a smooth $k$-curve, where $k$ is  a field.  Write $B=\Spec O_K$ in the number field case, and write $B$ for the smooth projective $k$-curve such that $k(B)\cong K$ in the function field case. Let $A$ be an abelian variety over $K$. Let $P$ be the Poincaré bundle  associated with $A$. Let $\overline{\PP}$ be the extended Poincaré bundle  (Proposition \ref{prolongation of  Poincaré  bundle}). In the number field case, by Proposition \ref{admissible metric on line bundle on abelian scheme}, $\overline{\PP}(\C)$ admits a unique smooth  admissible metric $\|\cdot\|$.

\begin{rem}\label{Rmk: order of component group is finite}
	Since the component groups of  $\NN(A)$ at each  place non-archimedean place $v$ of $K$ are trivial for all but finitely many~$v$, there is a positive integer $n$ such that $nx$ extends to a section $\bar{x}\in \NN^0(A)(B)$ for every $x\in A(K)$.
\end{rem}
Fix an algebraic closure $K\subset \overline{K}$.
\begin{defn}\label{Def: NT height pairing}
	Let $x\in A(\overline{K})$ and $y\in A^\vee(\overline{K})$. Let $n$ be a positive integer such that $nx$ extends to $\bar{x}\in \NN(A)^0(B)$ and $ny$ extends to $\bar{y}\in \NN(A^\vee)(B)$.  Let $K'/K$ be a finite field extension such that $x\in A(K'),\, y\in A(K')$. In the arithmetic case, the \textbf{Néron--Tate height pairing}  $\langle x,y\rangle_A$ of $x, y$ is given by 
	\[
	\langle x,y\rangle_A\coloneq \frac{\widehat{\deg}((\bar{x},\bar{y})^*(\overline{\PP},\|\cdot\|))}{n^2[K':K]}\in \R.
	\] The definition is independent of $n$ and the choice of $K'$. 
	
	If $K\cong k(B)$ is a function field, we use a similar formula to define the Néron--Tate height pairing:
	\[
	\langle x,y\rangle_A\coloneq \frac{{\deg}((\bar{x},\bar{y})^*\overline{\PP})}{n^2[K':K]}\in \Q.
	\] Again, the definition is independent of $n$ and the choice of $K'$. 
	
	The biextension structure of $\overline{\PP}$ and the compatibility of the admissible metric with this biextension structure (arithmetic case), see \cite[\S 5.2]{MB_adm}, give a bilinear form
	\[
	\langle -,- \rangle_A\colon A(\overline{K})\times A^\vee(\overline{K})\to \R,
	\] (the codomain is $\Q$ in the function field case) called the \textbf{Néron--Tate height pairing} on~$A$.
\end{defn}

We discuss the relation between the Néron--Tate height pairing (the way we define it) and the Néron--Tate height function. Recall the latter is defined as follows. 
\begin{defn}\label{def. NT height}
	Let $\theta$ be a symmetric ample line bundle on $A$.  The \textbf{Néron--Tate height} (function)  $\hat{h}_\theta\colon A(\overline{K})\to \R$ is given by
	\[
	\hat{h}_\theta(x)\coloneq \lim\limits_{n\to \infty}\frac{h_\theta([2^n]x)}{4^n}\in \R,
	\] where $h_\theta$ is any Weil height associated to $\theta$.  (The limit indeed exists, \emph{cf}.\ \cite[Lemma 9.2.4]{BG06}.)
\end{defn}
Consider the symmetric bilinear pairing
\begin{equation}\label{bilinear form associated to NT height}
	b\colon A(\overline{K})\times A(\overline{K})\to \R,\quad (x,y)\mapsto \hat{h}_\theta(x+y)-\hat{h}_\theta(x)-\hat{h}_\theta(y).
\end{equation}
Let 
\[
\phi_\theta\colon A\to A^\vee,\quad x\mapsto x^\vee\coloneq \tau_x^*\theta-\theta
\] be the polarization (isogeny) induced by $\theta$, where $\tau_x$ is the translation-by-$x$ automorphism. Then we have
\begin{equation}\label{2times Neron-Tate}
	2\hat{h}_\theta(x)=b(x,x)=\langle x,x^\vee\rangle_A,
\end{equation} where the second equality is by  \cite[Proposition 9.3.6]{BG06}.

\section{Adelic line bundles}\label{The Notion of Adelic Line Bundles}
We briefly review  Yuan--Zhang's theory \cite{YZ21}  of adelic line bundles and fix our notation at the same time. 

In this section, $k$ denotes either the archimedean-valued field $\C$ (complex-analytic case) or the ring $\Z$ of integers (arithmetic case).
\begin{defn}\label{projective model, adelic setting}
	Let $\U$ be an integral scheme that is quasi-projective and flat over $k$. A \textbf{projective model}~$\X$ of $\U$ is an integral projective $k$-scheme  that contains $\U$ as a Zariski open subset.
\end{defn} Projective models of $\U$ form a directed system with transition maps induced by birational morphisms that are isomorphic over $\U$.

For an integral  scheme $\X$ that is projective flat over $k$, we have the group $\overline{\Pic}(\X)$  of  isometry classes of \emph{continuous} metrized line bundles on the complex-analyti\-fication $\X^\an=\X(\C)$. The concept of metrized divisors are modelled by Green's functions as we now explain. Let $X$ be a complex variety and $X^\an$  the associated complex analytic space.
\begin{defn}
	Let $D$ be a  divisor\footnote{By default, a divisor is meant to be a Cartier divisor.} on  $X$. A \textbf{continuous Green's function}  $g_D$ for $D$ (or for the analytified divisor $D^\an$) is a  continuous function
	\[
	g_D\colon X^\an\ohne |D^\an|\to \R
	\] such that for any local equation $f$ for $D$ such that $\divisor(f)=D|_U$ for some Zariski open subset $U\subset X$, the function $g_D+\log|f|$ extends to a continuous function on $U^\an$. 
\end{defn}
\begin{rem}
	One can define \emph{smooth} Green's function in a similar manner, where, if $X$ is not smooth (or if $X^\an$ is not a complex manifold), the smoothness is defined as in \cite[\S 2.1.1]{YZ21}.
\end{rem}
\begin{void}\label{Green's function and line bundle}
	A continuous Green's function $g_D$ induces a continuous metric on the analytic line  bundle $\O_{X^\an}(D)$ by the rule
	\[
	\|1_D\|=\exp(-g_D),
	\] where $1_D$ denotes the canonical rational section of $\O_{X}(D)$ associated to $D$.
	
	Conversely, given a continuous metrized  line bundle $(L,\|\cdot\|)$ on $X$ and a nonzero rational section $s$ of $L$, we get a continuous Green's function $g_D$ for $D\coloneq\divisor(s)$ by the rule
	\[
	g_D\coloneq -\log\|s\|.
	\] The two constructions are inverse to each other.
\end{void}
\begin{defn}
	A \textbf{continuous metrized divisor} on $X$ is a pair $(D,g_D)$, where $D$ is a  divisor on $X$ and $g_D$ is a continuous Green's function for $D$. Let $\overline{\Div}(X)$ be the group of continuous metrized divisors on $X$. A \textbf{principal metrized divisor} is a metrized divisor of the form $(\divisor(f),-\log|f|)$, where $f$ is a  rational function on $X$. By an \textbf{effective} metrized divisor $(D,g_D)$, we mean  a metrized divisor such that $D$ is an effective (Cartier) divisor and $g_D\geq 0$. We define
	\[
	(D,g_D)\leq (D',g_{D'})  \iff (D-D',g_{D}-g_{D'}) \text{ is effective}.
	\]
\end{defn}
\begin{rem}
	By \ref{Green's function and line bundle}, the canonical map $\Div(X)\to \Pic(X),\, D\mapsto \O_X(D)$
	admits a metric refinement $\overline{\Div}(X)\to \overline{\Pic}(X)$. If $X$ is smooth projective, then this map is surjective and has kernel exactly the subgroup of principal metrized divisors. This follows from the Poincaré--Lelong formula,  see e.g.\ \cite[\S II.1]{Sou+}. 
\end{rem}
Let $\U$ be an integral scheme that is quasi-projective flat over $k$. 

\begin{defn}\label{analytic adelic divisors on quasi-projective complex variety}
	Projective models of $\U$ form a directed system with transition maps induced by birational morphisms that are isomorphic over $\U$. We let
	\[
	\overline{\Div}(\X,\U)=\overline{\Div}(\X)_\Q\times_{\Div(\U)_\Q} \Div(\U)
	\]
	be the group of  $(\Q,\Z)$-\textbf{divisors}, formed by the fiber product of abelian groups along the natural maps. Let
	\[
	\overline{\Div}(\U/k)_{\mod}=  \colim_{\X} \overline{\Div}(\X,\U)
	\]
	be the group of \textbf{model  adelic divisors}, where the filtered colimit ranges over    projective  models $\X$ of $\U$, and the transition maps are induced by pullback of metrized divisors along birational morphisms that are isomorphisms over $\U$.
\end{defn}
\begin{defn}
	Choose a projective model $\X\supset \U$.  A \textbf{metrized boundary divisor} (w.r.t.\ $\X$) is a pair $\overline{\scrB}=(\scrB,g_B)$, where $\scrB$ is an effective  divisor on $\X$ such that $\X\ohne |\scrB|=\U$, and $g_B$ is a \emph{strictly positive} continuous Green's function for the analytified divisor $B\coloneq \scrB^\an$. We call  $\scrB$ a \textbf{boundary divisor} of $\U$ on $\X$.
\end{defn}
\begin{void}[existence of metrized boundary divisors]\label{existence of metrized boundary divisor}
	Upon blowing up $\X$ along $\X\ohne \U$, we can find a boundary divisor of $\U$ on some projective model of $\U$. By  the construction outlined in \ref{Green's function and line bundle}, to show that a divisor admits a continuous Green's function, it is equivalent to construct a continuous metric on the associated line bundle, but this can be done by a partition of unity argument, which only requires para-compactness. The complex analytic space associated to a complex variety is para-compact. Moreover, a Green's function for an effective divisor is bounded below, hence, by adding a sufficiently large positive constant, we may arrange that  the Green's function for a boundary divisor is strictly positive.
\end{void}	
\begin{defp}
	Let $\overline{\scrB}=(\scrB,g_\scrB)$ be a metrized boundary divisor of~$\U$ on some projective  model $\X$. The \textbf{boundary norm} w.r.t.\ $\overline{\scrB}$ on $\overline{\Div}(\U/k)_\mod$ is defined to be
	\[
	\|\overline{\scrD}\|_{\overline{\scrB}}\coloneq \inf\{\epsilon\in \Q_{>0}\mid -\epsilon\overline{\scrB}\leq \overline{\scrD}\leq \epsilon \overline{\scrB}\}\in \R_{\geq 0}\cup\{\infty\}.
	\] Here the order ``$\leq$'' is induced by the effectivity of metrized  divisors, and  $\pm\epsilon\overline{\scrB}, \overline{\scrD}$ are compared implicitly on a common  projective  model of $\U$.  This defines an extended norm on $\overline{\Div}(\U/k)_\mod$,  and it is independent of the choice of  metrized boundary divisor.
\end{defp}
\begin{proof}
	See \cite[Lemma 2.4.1]{YZ21}; in the complex-geometric case see also  \cite[Propostion 3.2]{Song}.
\end{proof}
\begin{rem}
	Note that $\|\overline{\scrD}\|_{\overline{\scrB}}=\infty$ if and only if $\scrD|_\U\neq 0$.
\end{rem}

\begin{defn}\label{Def: Cauchy seq}
	We form  the group $\widehat{\Div}(\U/k)$ of \textbf{adelic divisors} on $\U$ by completing $\overline{\Div}(\U/\C)_\mod$ w.r.t.\ a boundary norm $\|\cdot\|_{\overline{\scrB}}$.  
	
	By design, an  adelic divisor is represented by a \textbf{Cauchy sequence} $(\overline{\scrD}_i)_{i\geq 1}$ of model  adelic divisors, that is, there is a sequence $(\epsilon_i)_i$ of positive rational numbers converging to~0 such that
	\[
	\forall\, j\geq i\geq 1\colon -\epsilon_i\overline{\scrB}\leq \overline{\scrD}_j-\overline{\scrD}_i\leq \epsilon_i\overline{\scrB} 
	\] for some metrized boundary divisor $\overline{\scrB}$.
\end{defn}
\begin{void}
	Let $\L$ be a line bundle on $\U$. Let $\X_1, \X_2$ be two projective models of $\U$.   Let $\LL_i$ be  $\Q$-line bundles on  $\X_i$ $(i=1,2)$  that extend $\L$, that is,   we are given isomorphisms  
	$\LL_1|_\U\xrightarrow{\sim} \L$ and $\LL_2|_\U\xrightarrow{\sim} \L$ of $\Q$-line bundles. We denote by $f\colon \LL_1\dashrightarrow \LL_2$ for the unique ``\textbf{rational map}'' of $\Q$-line bundles induced by the isomorphisms $\LL_1|_\U\xrightarrow{\sim} \L\xrightarrow{\sim} \LL_2|_\U$. We have the obvious definition for the rational map $f^{-1}\colon \LL_2\dashrightarrow \LL_1$.
	
	Suppose we have a birational morphism $p\colon \X_2\to \X_1$ which is an isomorphism over $\U$. Let $\LL_i$ be $\Q$-line bundles on $\X_i$. A rational map $f\colon \LL_1\dashrightarrow\LL_2$ induces a $\Q$-divisor 	$\divisor_{\X_2}(f)$ supported on $\X_2\ohne \U$ by viewing $f$ as a rational section of $\LL_2-p^*\LL_1$, which is nowhere vanishing on $p^{-1}\U$. Then $\divisor_{\X_2}(f)$ gives rise to a model adelic divisor on $\U$, denoted $\divisor(f)$; as the notation suggests, it is independent of the choice of models, because projective models of $\U$ form a filtered system. 
\end{void}

\begin{defn}
	Fix a metrized boundary divisor $\overline{\scrB}$  on a projective  model $\X$ of~$\U$. Consider a tuple $(\L,(\X_i,\overline{\LL_i},\ell_i)_{i\geq 1})$ where
	\begin{itemize}
		\item  $\L$ is a  line bundle on $\U$;
		\item Each $\X_i$ is a  projective  model of $\U$;
		\item Each $\LL_i$ is a   $\Q$-line bundle on $\X_i$;
		\item For every $i\geq 1$, an isomorphism
		\[
		\ell_i\colon \L\xrightarrow{\sim}\LL_i|_\U
		\] of $\Q$-line bundles;
		\item For every $i\geq 1$, $\LL_i$ is endowed with a continuous metric $\|\cdot\|_i$ such that  the rational map  $\ell_i\ell_1^{-1}\colon \LL_1\dashrightarrow \LL_i$ induces an isometry
		\[
		(\LL_1|_\U,\|\cdot\|_1)\xrightarrow{\sim} (\LL_i|_\U,\|\cdot\|_i);
		\] write $\overline{\LL_i}=(\LL_i,\|\cdot\|_i)$.
	\end{itemize}	
	The tuple $(\L,(\X_i,\overline{\LL_i},\ell_i)_{i\geq 1})$ represents an \textbf{adelic line bundle} on $\U$ if $\divisor(\ell_i\ell_1^{-1})_{i\geq 1}$ represents an adelic divisor on $\U$. Explicitly, this means that there is  a sequence $(\epsilon_i)_i$ of positive rational numbers converging to 0 such that the \textbf{Cauchy condition}
	\[
	-\epsilon_i\overline{\scrB}\leq \overline{\divisor}(\ell_j\ell_i^{-1})\leq \epsilon_i\overline{\scrB},\quad j\geq i\geq 1
	\] is satisfied, where $\overline{\divisor}(\ell_j\ell_i^{-1})\coloneq(\divisor(\ell_j\ell_i^{-1}),g_\scrD)$ is a metrized divisor, i.e.\ $g_\scrD$ is Green's function for $\divisor(\ell_j\ell_i^{-1})$ (realized on a suitable  projective  model of $\U$). Informally, we say that adelic line bundles are limits of model  adelic line bundles. 
\end{defn}

\begin{defn}[{\cite[\S 2.5.1]{YZ21}}]\label{iso of adelic line bundles, arch. case} Fix the boundary data $(\X,\overline{\scrB})$.
	Let $(\L, (\X_i,\overline{\LL_i},\ell_i)_{i\geq 1})$ and $(\L', (\X'_i,\overline{\LL'_i},\ell'_i)_{i\geq 1})$ be two tuples representing two adelic line bundles on $\U$. A \textbf{morphism} between the tuples  is an isomorphism $\iota\colon \L\xrightarrow{\sim}\L'$ of (integral) line bundles satisfying the following properties. For each $i\ge 1$, the rational map 
	\[
	\ell_i'\iota\ell_i^{-1}\colon \LL_i\dashrightarrow \LL'_i	\] induces a model adelic divisor $\divisor(\ell'_i\iota\ell_i^{-1})$. Choosing a Green's function for this divisor, we obtain a model adelic divisor $\overline{\divisor}(\ell'_i\iota\ell_i^{-1})\in \overline{\Div}(\U/k)_{\mathrm{mod}}$ (whose image in $\Div(\U)$ is~0). And we require the sequence 
	$(\overline{\divisor}(\ell'_i\iota\ell_i^{-1}))_{i\geq 1}$  to be Cauchy.
	
	We let $\widehat{\Pic}(\U/k)$ be the  group  of isomorphism classes of such triples and call elements thereof also as \textbf{adelic line bundles} on $\U$ (the group structure is induced by the tensor product of metrized line bundles). Note that the identity element is represented by $(\O_\U,(\X,\overline{\O}_\X,1))$, where the metrized line bundle $\overline{\O}_\X$ corresponds to a choice of a continuous function on~$\X^\an$.
\end{defn}
The arithmetic setting ($k=\Z$) enters in the following way.  Suppose we have a smooth quasi-projective variety $S$ over a number field $K$, and $X/S$ is a smooth (quasi-)projective family. Then the composition $X\to S\to \Spec K\to \Spec O_K$ is not quasi-projective (since it is not of finite type), where $O_K$ denotes the ring of algebraic integers of $K$. On the other hand, $X$ is essentially quasi-projective over $O_K$,  see \cite[\S 2.3.1]{YZ21}.  We also regard~$X$ as an essentially quasi-projective scheme over $\Z$.
(We do not need to introduce the essential quasi-projectivity  in the complex-geometric setting; every quasi-projective $\C$-scheme is essentially quasi-projective over $\C$.)

For an essentially quasi-projective integral flat $O_K$-scheme $X$, one still has the notion of adelic line bundles.
\begin{defn}\label{quasi-projective model, arithmetic}
A \textbf{quasi-projective model} of~$X$ is an integral scheme $\U$ that is quasi-projective flat over an open subset $V\subset \Spec O_K$, together with a $K$-open immersion $X\hookrightarrow \U\times_V\Spec K$.
\end{defn}
Quasi-projective models of $X$ form a directed system with transition maps induced by birational morphisms that are isomorphisms over $X$.
\begin{defn}\label{arithmetic adelic divisors and line bundles on quasi-projective $K$-varieties}
We define
	\[
	\widehat{\Div}(X/\Z)=\colim_\U\widehat{\Div}(\U/\Z)
	\] as the group of \textbf{adelic divisors} on $X$, where the filtered colimit is taken over all quasi-projective models $\U$ of $X$.  We define
	\[
	\widehat{\Pic}(X/\Z)=\colim_\U\widehat{\Pic}(\U/\Z)
	\] as the group of (isomorphism classes of) \textbf{adelic line bundles} on $X$.
\end{defn}
\begin{void}[pullbacks]\label{pullback of adelic line bundles}
	Recall $k\in  \{\C,\Z\}$. Let $X, Y$ be essentially quasi-projective integral flat $k$-schemes, and suppose there is a $k$-morphism $f\colon X\to Y$. By  \cite[\S 2.5.5]{YZ21}, there is a well-defined pullback
	\[
	f^*\colon \widehat{\Pic}(Y/k)\to \widehat{\Pic}(X/k)
	\]   of adelic line bundles.
\end{void}

To distinguish, we sometimes call adelic line bundles (resp.\ divisors) in the complex-analytic case \textbf{analytic}  adelic line bundles (resp.\ divisors), and those in the arithmetic case \textbf{arithmetic} adelic line bundles (divisors).

\section{Complex-analytic case}\label{Sect:Complex-analytic Case}
\subsection{Review of positivity notions}
We first need to identify an initial input $(\X,\overline{\LL})$, where the metrized line bundle $\overline{\LL}$  satisfies certain positivity properties. 
\begin{void}[nefness and integrability]\label{nefness and integrability}
	Recall that a line bundle $L$ on a  variety $X$ over a field $k$ is called \textbf{nef} if, for every smooth proper  $k$-curve $C\subset X$, $\deg(L|_C)\geq 0$.
	
	Now let $X$ be a  projective complex variety and $L$  a line bundle on $X$. A continuous metric  $\|\cdot\|$ on $L$ is called \textbf{semipositive} or \textbf{psh} (plurisubharmonic) if the first Chern current associated to $(L,\|\cdot\|)$ is a positive $(1,1)$-current. The  metric $\|\cdot\|$ is called \textbf{integrable} if it is a difference of two semipositive metrics. A continuous metrized line bundle $(L,\|\cdot\|)$ is called \textbf{nef} if $L$ is nef and  $\|\cdot\|$ is semipositive. An \textbf{integrable metrized line bundle} is a difference of two nef metrized line bundles. See also the discussion at the end of \cite[\S 2.1.1]{YZ21} on different conventions of these notions.
\end{void}
\begin{ex}\label{admissible line bundle is integrable}
	Let $(L,\|\cdot\|)$ be an admissible line bundle on a complex abelian variety~$X$ (Definition \ref{definition of admissible metric}). By work of Zhang \cite{ZhangSmallPoints}, it is a smooth integrable metrized line bundle.
\end{ex}
The following properties for ample line bundles are well-known for a proper family. We recall these properties in the not necessarily proper setting.
\begin{lem}\label{GW, 13.62}
	Let $f\colon X\to S$ be a morphism of finite type to a quasi-compact scheme $S$. The following assertions for a line bundle $L$ on $X$ are equivalent:
	\begin{enumerate}
		\item $L$ is $f$-ample (relatively ample w.r.t.\ $f$).
		\item There exists a positive integer $n$ such that $nL$ is $f$-very ample.
		\item For every line bundle $L'$ on $X$ there exists a positive integer $n_0$ such that $L'+nL$ is $f$-very ample for all $n\geq n_0$. 
	\end{enumerate}
\end{lem}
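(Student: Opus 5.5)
The plan is to prove the cycle $(1)\Rightarrow(3)\Rightarrow(2)\Rightarrow(1)$, reducing everything to the affine-base case where relative ampleness becomes ampleness on a quasi-compact scheme. The implication $(3)\Rightarrow(2)$ is immediate: take $L'=\O_X$. For $(2)\Rightarrow(1)$, recall that $L$ is $f$-ample iff some positive multiple is, and an $f$-very ample bundle (one for which there is an $S$-immersion $X\hookrightarrow \P(\mathcal{E})$ with $L\cong \O(1)|_X$ for a quasi-coherent $\mathcal{E}$ of finite type) is $f$-ample, since on each affine open of $S$ it is the restriction of $\O(1)$ along an immersion, and $\O(1)$ is ample on projective space over an affine scheme. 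So this direction is formal.

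The substantive direction is $(1)\Rightarrow(3)$. Since $S$ is quasi-compact, I cover it by finitely many affine opens $V_1,\dots,V_r$. Very ampleness relative to $f$ is local on $S$ for the notion via $\P(f_*\cdots)$ used in \cite[\S13]{GW}, so it suffices to find, for each $j$, an $n_j$ such that $L'+nL$ is very ample on $X_{V_j}/V_j$ for all $n\ge n_j$, and then take $n_0=\max_j n_j$. Over an affine base $V=\Spec R$, $X_V$ is quasi-compact (finite type over $V$) and $L|_{X_V}$ is ample. By the standard characterization of ample line bundles on quasi-compact schemes, there exists $m$ such that for all $n\ge m$ the sections of $L'+nL$ globally generate it and finitely many of them define an immersion into $\P^N_R$; in the non-proper case I would argue as follows: first choose finitely many $s_i\in\Gamma(X_V,kL)$ with $X_{s_i}$ affine covering $X_V$, and use that on each $X_{s_i}$ the sections of $L'$ and of $L$ extend after multiplying by powers of $s_i$ (the quasi-coherent extension lemma). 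This gives $n_1$ with $L'+nL$ globally generated by finitely many sections for all $n\ge n_1$, and finitely many sections whose ratios generate each $\Gamma(X_{s_i},\O)$ as an $R$-algebra (finite type over $R$), producing an immersion.

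The main obstacle is ensuring the immersion property holds for \emph{all} large $n$, not just multiples of $k$. I would handle this by the usual trick: once $M_a\coloneq L'+aL$ is globally generated for $a\in\{n_1,\dots,n_1+k-1\}$ and $kL$ gives an immersion with generators of the coordinate rings as above, then $L'+nL = M_a + q(kL)$ with $n=a+qk$, and a tensor product of a globally generated bundle with a very ample (immersive) one, for $q$ large, is very ample: the Segre-type product of the map of $M_a$ with the immersion of $qkL$ is still an immersion, since the composite $X\to \P\times\P\to\P$ factors through an immersion. Taking the maximum over the finitely many residues $a$ and affine pieces $V_j$ yields the uniform $n_0$. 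Alternatively, I would simply cite \cite[Summary 13.71, Proposition 13.62]{GW}, which is exactly this statement, matching the paper's label.
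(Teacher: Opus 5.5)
Your proof is correct and is the standard EGA~II / G\"ortz--Wedhorn argument: reduce to an affine base, get global generation of $L'+nL$ for large $n$ and very ampleness of some $kL$, then use that globally generated $\otimes$ very ample is very ample. That last step needs $q\ge 1$, which your ``for $q$ large'' ensures. The paper gives no argument of its own and simply cites \cite[Theorem 13.62]{GW}, which is exactly your fallback.
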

\begin{proof}
	See \cite[Theorem 13.62]{GW}.
\end{proof}
\begin{rem}
	Let $f\colon X\to R$ be a morphism of finite type to an \emph{affine} scheme $R$, then an $f$-ample line bundle on $X$ is  (absolutely) ample, see \cite[Proposition 13.63]{GW}.
\end{rem}
\begin{lem}\label{GW, 13.65}
	Let $R$ be an affine scheme. Let $f\colon X\to S$ be a morphism of finite type over an $R$-scheme $S$. Let $L$ be an $f$-ample line bundle on $X$, and let $M$ be an (absolutely) ample line bundle on $S$. There exists a positive integer $n_0$ such that $L+f^*(nM)$ is (absolutely) ample for all $n\geq n_0$.
\end{lem}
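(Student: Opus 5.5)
The plan is to reduce to the case of an affine base $S$ and then use the characterization of relative ampleness in Lemma~\ref{GW, 13.62}. In outline: choose one very ample line bundle $M$ on $S$, use it to produce a single locally closed embedding of $X$ into projective space over $R$ that simultaneously realizes $L+f^*(nM)$, and finally pass from very ampleness to ampleness.

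\textbf{Step 1: very ampleness of a fixed twist.} Since $M$ is ample on $S$, some positive multiple $M_0 \coloneq aM$ is very ample relative to $S\to R$. Since $L$ is $f$-ample, Lemma~\ref{GW, 13.62}(2) gives an integer $m>0$ such that $mL$ is $f$-very ample. Composing the two embeddings, we have
\[
X\hookrightarrow \P(\mathcal{E})\hookrightarrow \P^N_S,\qquad S\hookrightarrow \P^{N'}_R ,
\]
and the Segre embedding then shows that $mL+f^*M_0$ is very ample relative to $R$. Here we use the paper's convention that (quasi-)projective morphisms are of finite type, and that $R$ is affine, so that all these embeddings are compatible. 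The conclusion is that $m(L+f^*(bM))$ is ample over $R$ for a suitable integer $b$; for instance we may take $m$ divisible by $a$.

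\textbf{Step 2: pass to all large $n$.} Ampleness over the affine base $R$ is equivalent to absolute ampleness (the remark after Lemma~\ref{GW, 13.62}, i.e.\ \cite[Proposition 13.63]{GW}). Ampleness is also stable under positive multiples and under adding globally generated bundles. For $n\ge n_0$ write
\[
L+f^*(nM)=\bigl(L+f^*(n_0M)\bigr)+f^*\bigl((n-n_0)M\bigr).
\]
To apply this we need $L+f^*(n_0M)$ to be ample and $f^*(kM)$ to be globally generated for the relevant $k$. Global generation of $f^*(kM)$ only holds for large $k$, so we split further and write $n = n_1 + qa + r$. Alternatively, and more cleanly, we replace the given $M$ by all twists $L+f^*(rM)$ with $0\le r<a$, apply Step~1 to each of these finitely many $f$-ample bundles (each is $f$-ample because $f^*$ of anything is $f$-trivially ample-compatible), and take the maximum of the resulting thresholds. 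Ample plus globally generated is ample, since it is ample plus nef over a quasi-projective base; alternatively one can use Lemma~\ref{GW, 13.62}(3) relative to $R$ directly.

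\textbf{Main obstacle.} The delicate point is that $X$ and $S$ need not be proper. We therefore cannot invoke Nakai-type or Kleiman-type criteria, and must argue purely with the embedding characterization of Lemma~\ref{GW, 13.62}. Concretely, the step to check is that a locally closed immersion into $\P^N_S$ composed with $S\hookrightarrow\P^{N'}_R$ gives a locally closed immersion into $\P_R$ via Segre, with the pullback of $\O(1)$ equal to $mL+f^*M_0$. If this direct route has trouble, I would instead simply cite \cite[Proposition 13.65]{GW}, of which this statement is essentially a verbatim form.
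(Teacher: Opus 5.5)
Your fallback is exactly the paper's proof: the lemma is quoted as a special case of \cite[Proposition 13.65]{GW}, with no further argument. Your direct route, however, breaks in Step 1. The factorization $X\hookrightarrow \P(\mathcal{E})\hookrightarrow \P^N_S$, compatible with $\O(1)$, requires $\mathcal{E}$ to be generated by global sections. This fails in general when $S$ is not affine, and the resulting claim that $mL+f^*M_0$ is very ample over $R$ is false. For example, take $f=\id_{\P^1_k}$. The bundle $L=\O(-2)$ is $f$-very ample: take $\mathcal{E}=L$, so that $\P(\mathcal{E})=\P^1_k$ with $\O(1)=L$. Also $M=M_0=\O(1)$ is very ample. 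Yet $mL+f^*M_0=\O(1-2m)$ is never ample.

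The missing idea is a twist, and this is precisely where the ampleness of $M$ is used. Since $\mathcal{E}$ is of finite type on the quasi-compact scheme $S$, the sheaf $\mathcal{E}\otimes kM$ is globally generated for $k\gg 0$. The canonical isomorphism $\P(\mathcal{E})\cong\P(\mathcal{E}\otimes kM)$ turns $\O(1)$ into $mL+f^*(kM)$. Only then does the Segre embedding show that $mL+f^*((k+a)M)$ is very ample over $R$. Choosing $k$ with $m\mid k+a$ produces one $b$ with $L+f^*(bM)$ ample.

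There are three further points.

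First, your embedding argument needs $S\to R$, and hence $X\to R$, to be of finite type. You need this both to embed $S$ into $\P^{N'}_R$ and to invoke \cite[Proposition 13.63]{GW}. That hypothesis is not in the statement. The lemma holds without it, indeed without $R$ at all (cf.\ EGA II, 4.6.13(ii)), if you work with sections rather than embeddings. Cover $S$ by finitely many affine $S_t$ with $t\in\Gamma(S,kM)$. Cover each $f^{-1}(S_t)$ by finitely many affine $X_s$ with $s\in\Gamma(f^{-1}(S_t),mL)$. For $N\gg 0$, extend $s\otimes f^*t^{N}$ to a global section of $mL+f^*(NkM)$, and multiply by $f^*t$. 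The resulting global sections have non-vanishing loci exactly the affine $X_s$, and these cover $X$.

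Second, your justification ``ample plus nef'' for ``ample plus globally generated is ample'' is not available here, as you yourself note. The correct reason is elementary: if $X_\sigma$ is affine, then $X_{\sigma\otimes\tau}=X_\sigma\cap X_\tau$ is the non-vanishing locus of a section of a line bundle on an affine scheme, hence affine.

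Third, the residue-class bookkeeping in Step 2 is unnecessary. The bundle $kM$ is globally generated for \emph{all} $k\ge k_0$, not only for multiples of $a$. Hence $L+f^*(nM)=(L+f^*(bM))+f^*((n-b)M)$ is ample for every $n\ge b+k_0$.
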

\begin{proof}
	This is a special case of \cite[Proposition 13.65]{GW}.
\end{proof}
\begin{lem}\label{extension of line bundle, positivity}
	Let $k$ be a field and $S$ be a smooth $k$-curve; let  $\widetilde{S}\supset S$ be the smooth projective model over $k$.  Let $p\colon \U\to S$ be a smooth quasi-projective morphism. Let $\L$ be an $S$-\emph{ample} line bundle on $\U$. There exist a projective flat morphism $\tilde{p}\colon \X\to \widetilde{S}$, where~$\X$ is an integral projective $k$-scheme containing $\U$ as a dense  open subset, and a $\Q$-line bundle~$\LL$ on $\X$ with the following properties:
	\begin{enumerate}
		\item the family $\X/\widetilde{S}$ extends $\U/S$, that is, there is a commutative (but not necessarily a pullback) diagram
		\[
		\begin{tikzcd}
			\U\ar[hook]{r} \ar{d}{p} & \X \ar{d}{\tilde{p}} \\
			S\ar[hook]{r} & \widetilde{S}
		\end{tikzcd}
		\]
		\item there exists an isomorphism $\LL|_\U\xrightarrow{\sim} \L$ of $\Q$-line bundles.
		\item there exists an ample line bundle $\mathscr{M}$ on $\widetilde{S}$ such that $\LL+\tilde{p}^*\mathscr{M}$ is (absolutely) ample on~$\X$.
	\end{enumerate} 
\end{lem}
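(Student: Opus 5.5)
The plan is to compactify using relative very ampleness. By Lemma~\ref{GW, 13.62}, after replacing $\L$ by $n\L$ for a suitable $n\geq 1$, we may assume $\L$ is $p$-very ample. This is harmless, because we only need a $\Q$-line bundle: we will take $\LL=\frac1n(\cdot)$ at the end. Since $S$ is quasi-compact and $p$ is quasi-projective and of finite type, very ampleness gives an $S$-immersion $\U\hookrightarrow \P(\mathcal{E})$ for some coherent (after possibly enlarging, locally free) sheaf $\mathcal{E}$ on $S$, with $\L\cong \O(1)|_\U$. For concreteness I would use the definition of quasi-projectivity in the Conventions: there is a locally closed $S$-immersion $\U\hookrightarrow\P^N_S$, and we may choose it so that $\O(1)$ restricts to a power of $\L$. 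If necessary, I would compose with a Segre/Veronese-type argument using Lemma~\ref{GW, 13.62}(3).

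The first step is to extend the ambient space. The sheaf $\mathcal{E}$ extends to a coherent sheaf $\widetilde{\mathcal{E}}$ on $\widetilde{S}$, and dividing by torsion makes it locally free, since $\widetilde S$ is a smooth curve and hence Dedekind. In the $\P^N_S$ presentation this step is trivial: $\P^N_S\subset \P^N_{\widetilde S}$. Let $\X$ be the scheme-theoretic closure of $\U$ in $\P^N_{\widetilde S}$, with the reduced structure. It is integral because $\U$ is integral (it is smooth over the curve $S$, and I assume it is connected, as implicitly intended). It is projective over $\widetilde S$, hence over $k$, since $\widetilde S$ is projective. The map $\tilde p$ is non-constant because $p$ is dominant, so it is flat by Remark~\ref{flatness over Dedekind scheme}. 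The square commutes by construction, and $\U$ is open in $\X$ because a locally closed subscheme is open in its closure. This gives (1).

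Next, set $\LL:=\frac1n\,\O_{\P^N_{\widetilde S}}(1)|_\X$. Then $\LL|_\U\cong\L$ as $\Q$-line bundles, which is (2). Moreover $\O(1)|_\X$ is $\tilde p$-very ample, hence $\tilde p$-ample. Since $\widetilde S$ is projective over the affine scheme $\Spec k$, it carries an ample line bundle $\mathscr M_0$. Lemma~\ref{GW, 13.65} then yields $m$ with $\O(1)|_\X+\tilde p^*(m\mathscr M_0)$ ample. Taking $\mathscr M=m\mathscr M_0$ (and dividing by $n$, which preserves ampleness of $\Q$-line bundles by the Conventions), we see that $\LL+\tilde p^*\frac{1}{n}\mathscr M$ is ample. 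To land on an honest line bundle $\mathscr M$, I would instead choose $m$ divisible by $n$. This gives (3).

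The main obstacle is the first step: producing an embedding of $\U$ into a projective bundle over $S$ in which $\O(1)$ restricts to a multiple of $\L$, rather than an arbitrary quasi-projective embedding. I would first try the characterization of relative very ampleness in \cite{GW}, namely that $n\L$ being $p$-very ample means there is an immersion into $\P(p_*(n\L)')$ for a quasi-coherent, finite-type subsheaf. I would then use quasi-compactness of $S$ to take a coherent subsheaf, and extend that coherent sheaf across the finitely many missing points of $\widetilde S$.
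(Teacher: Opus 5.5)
Your proposal is correct, but it follows a different route from the paper.

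\textbf{The paper's route (absolute).} It applies Lemma~\ref{GW, 13.65} on the open family to make $\L+p^*M$ ample on $\U$, where $M=\widetilde M|_S$. It embeds $\U$ in $\P^n_k$ by a multiple of this bundle and takes the closure. It then blows up away from $\U$ to resolve the rational map $\X\dashrightarrow\widetilde S$. Finally it sets $\LL=\LL'-\tilde p^*\widetilde M$, with $\LL'$ a root of $\O(1)$.

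\textbf{Your route (relative).} You embed $\U$ in $\P(\mathcal E)$ over $S$ via the $p$-very ample bundle $n\L$. You extend $\mathcal E$ coherently to $\widetilde S$ and close up in $\P(\widetilde{\mathcal E})$. So $\tilde p$ is a morphism from the start, and Lemma~\ref{GW, 13.65} is applied only afterwards, on the compactified family.

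\textbf{The step you flag as the main obstacle does go through.}
\begin{enumerate}
\item A relatively very ample bundle for a finite-type morphism to a quasi-compact base gives an immersion into $\P(\mathcal E)$ with $\mathcal E$ of finite type.
\item A coherent sheaf on the open $S$ extends to a coherent sheaf on the noetherian $\widetilde S$.
\item $\P(\widetilde{\mathcal E})\times_{\widetilde S}S=\P(\mathcal E)$ is open in $\P(\widetilde{\mathcal E})$, so $\U$ is still immersed.
\item $\P(\widetilde{\mathcal E})$ is projective over $\widetilde S$ for any coherent $\widetilde{\mathcal E}$, because $\widetilde S$ is projective. So you need not divide out torsion.
\end{enumerate}
Your shortcut via $\P^N_S$, with $\O(1)|_\U$ a power of $\L$, is literally available only when $S$ is affine. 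That holds whenever $S\neq\widetilde S$; if $S=\widetilde S$, just use $\P(\mathcal E)$ directly.

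\textbf{What each route buys.} Your route involves no blow-up, so (3) follows directly from the $\tilde p$-ampleness of $\O(1)|_\X$. In the paper's route, the pullback of $\O(1)$ to the blow-up is in general only nef and big, not ample. So (3) needs a small repair there, e.g.\ closing up in $\P^n_k\times_k\widetilde S$ and using $\O(1)\boxtimes\widetilde M$. In turn, the paper's route needs only an absolute embedding and mirrors Yuan--Zhang's argument. It is also reused almost verbatim in the arithmetic Lemma~\ref{extension of line bundle, positivity, arithmetic case}.
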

\begin{proof}
	This is  similar to the proof in \cite[Theorem 6.1.1]{YZ21} (see the  last few paragraphs). 
	
	Since $\L$ is relatively ample, by Lemma \ref{GW, 13.65}, we can choose an ample line bundle $\widetilde{M}$ on~$\widetilde{S}$ such that	 some positive multiple of $\L+p^*M$, where $M\coloneq \widetilde{M}|_S$, induces a locally closed immersion of $\U\hookrightarrow \P^n_k$ into some projective space over $k$; set $\X$ to be the scheme-theoretic closure of $\U$ in~$\P^n_k$. 
	
	By construction, we  obtain a rational map $\X\dashrightarrow \widetilde{S}$. Upon blowing up $\X$ further at centres not meeting $\U$, we can turn it into an (actual) projective morphism $\tilde{p}\colon \X\to \widetilde{S}$. Note that $\tilde{p}$ is flat by Remark \ref{flatness over Dedekind scheme}. By construction, $\tilde{p}$ extends $p$.
	
	By design, a suitable root $\LL'$ of the tautological line bundle $\O(1)$ on $\P^n_k$ restricted along the closed subvariety $\X\subset \P^n_k$ extends  $\L+p^*{M}$ and it is absolutely ample on $\X$. We are done by setting $\LL\coloneq \LL'-\tilde{p}^*\widetilde{M}$.
\end{proof}

Let $\U/S$ be as in Lemma \ref{extension of line bundle, positivity} with $k=\C$, and let $\X/\widetilde{S},\, \LL,\, \mathscr{M}$ be  the corresponding extension data. Equip $\LL$ with a smooth metric  and denote the resulting metrized line bundle by $\overline{\LL}$. By ampleness, we can equip $\mathscr{M}$ with a  semipositive metric such that
$\overline{\LL}+\tilde{p}^*\overline{\mathscr{M}}$ is a nef metrized line bundle on~$\X$.

We thus obtain the following metric refinement of Lemma \ref{extension of line bundle, positivity}.
\begin{lem}\label{extension of line bundle, positivity, anayltic}
	Let $p\colon \U\to S$ be a smooth quasi-projective morphism over a smooth $\C$-curve $S$. Let $\L$ be an $S$-\emph{ample} line bundle on $\U$. There exist a projective flat morphism $\tilde{p}\colon \X\to \widetilde{S}$ from a projective  model $\X\supset \U$  and a metrized $\Q$-line bundle $\overline{\LL}$ with the following properties:
	\begin{enumerate}
		\item the family $\X/\widetilde{S}$ extends $\U/S$;
		\item there exists an isomorphism $\LL|_\U\xrightarrow{\sim} \L$ of $\Q$-line bundles.
		\item there exists a nef metrized line bundle $\overline{\mathscr{M}}$ on $\widetilde{S}$ such that the metrized line bundle $\overline{\LL}+\tilde{p}^*\overline{\mathscr{M}}$ on $\X$ is nef.
	\end{enumerate} 
\end{lem}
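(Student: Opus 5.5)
The plan is to combine Lemma \ref{extension of line bundle, positivity} with a choice of metrics; this is exactly the remark made just before the statement, so the work lies in making the metric choices precise. Apply Lemma \ref{extension of line bundle, positivity} with $k=\C$ and $\widetilde{S}=\Sbar$, the smooth projective model of $S$. This produces a projective flat morphism $\tilde{p}\colon \X\to\widetilde{S}$ from an integral projective $\C$-scheme $\X\supset\U$, which is therefore a projective model of $\U$ in the sense of Definition \ref{projective model, adelic setting}. It also produces a $\Q$-line bundle $\LL$ on $\X$ with $\LL|_\U\cong\L$, and an ample $\mathscr{M}$ on $\widetilde{S}$ with $\LL+\tilde{p}^*\mathscr{M}$ ample. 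Properties (1) and (2) of the statement are then immediate, and what remains is to choose metrics.

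First I choose an arbitrary smooth metric on $\LL$. Concretely, I fix $r$ with $r\LL$ an honest line bundle, pick a smooth hermitian metric on $r\LL$ by a partition of unity, as in \ref{existence of metrized boundary divisor}, and take its $r$-th root. Since $\X$ may be singular, "smooth" is understood in the sense of \cite[\S 2.1.1]{YZ21}, or one simply works with continuous metrics. Call the result $\overline{\LL}$.

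The key step is to arrange semipositivity of $\overline{\LL}+\tilde{p}^*\overline{\mathscr{M}}$. Because $\LL+\tilde{p}^*\mathscr{M}$ is ample, after replacing $\mathscr{M}$ by a multiple $a\mathscr{M}$ we may assume it is very ample, with an embedding $\X\hookrightarrow\P^N$ for some multiple. Restricting the Fubini--Study metric gives a positive smooth metric $h_0$ on $\LL+\tilde{p}^*\mathscr{M}$. This metric has no relation to $\overline{\LL}$, so I compare it with the metric induced by $\overline{\LL}$ and an auxiliary positive metric $h_M$ on $\mathscr{M}$, whose curvature is a positive form $\omega_M$ on the curve $\widetilde{S}$.

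The main obstacle is exactly this comparison. The difference $\phi$ between the two metrics is a smooth function on the compact space $\X^\an$. The curvature of $\overline{\LL}+\tilde{p}^*(m\overline{\mathscr{M}})$ equals
\[
c_1(\overline{\LL})+m\,\tilde{p}^*\omega_M .
\]
This form is positive in horizontal directions for large $m$, but it need not be positive along the fibres. So the naive approach of taking $m\gg0$ does not work directly.

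I would instead handle it as follows. Set $m=1$ and apply the curvature identity to the reference metric $h_0$. Then define the metric on $\LL$ as $h_0$ divided by $\tilde{p}^*h_M$. This is legitimate because metrics on $\LL$ may be chosen freely, so I define $\overline{\LL}$ precisely this way rather than choosing it arbitrarily. With this choice, $\overline{\LL}+\tilde{p}^*\overline{\mathscr{M}}=(\LL+\tilde{p}^*\mathscr{M},h_0)$ holds tautologically. This metrized line bundle is nef: its underlying bundle is ample, hence nef, and the Fubini--Study restriction is semipositive. Also $\overline{\mathscr{M}}=(\mathscr{M},h_M)$ is nef, since $\mathscr{M}$ is ample and $h_M$ is positive. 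Any multiple taken along the way is absorbed by dividing by $a$ in the $\Q$-line bundle formalism, which gives property (3).
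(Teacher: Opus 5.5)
Your proposal is correct. It follows the paper's outline: apply Lemma \ref{extension of line bundle, positivity} with $k=\C$, then choose metrics. But it reverses the order in which the metrics are chosen, and that is a genuine improvement.

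In the paragraph preceding the statement, the paper fixes a smooth metric on $\LL$ first. It then claims that, by ampleness, a semipositive metric on $\mathscr{M}$ can be found making $\overline{\LL}+\tilde{p}^*\overline{\mathscr{M}}$ nef. Read literally, with an arbitrary smooth metric on $\LL$, this cannot work, for exactly the reason you give: $\tilde{p}^*\omega_M$ vanishes on vertical tangent vectors, so no metric on $\mathscr{M}$ can repair fibrewise negativity of $c_1(\overline{\LL})$.

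You instead fix a semipositive Fubini--Study metric $h_0$ on the ample $\Q$-line bundle $\LL+\tilde{p}^*\mathscr{M}$ and a positive metric $h_M$ on $\mathscr{M}$. You then \emph{define} the metric on $\LL$ as $h_0\otimes\tilde{p}^*h_M^{-1}$. This makes property (3) tautological and still gives a smooth metric on $\LL$. It is precisely the construction the paper itself uses in the arithmetic analogue, Lemma \ref{extension of line bundle, positivity, arithmetic case}, where the new metrized bundle is defined as $\overline{\LL_1}-\pi^*\overline{\mathscr{M}}$.

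Two clean-ups are needed:
\begin{itemize}
\item Drop the initial ``arbitrary'' metric on $\LL$ altogether.
\item The multiple to take is $a(\LL+\tilde{p}^*\mathscr{M})$, not $a\mathscr{M}$, with $a$ chosen so that $a\LL$ is an honest line bundle and $a(\LL+\tilde{p}^*\mathscr{M})$ is very ample. Restrict the Fubini--Study metric along the resulting embedding and extract the $a$-th root.
\end{itemize}

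Finally, your argument only uses that $\LL+\tilde{p}^*\mathscr{M}$ is nef and carries a semipositive metric. It would therefore go through unchanged even if that bundle were merely the pullback of an ample bundle under the blow-up used to resolve $\X\dashrightarrow\widetilde{S}$.
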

\begin{rem}
	By restriction, we obtain similar extension results as in Lemma \ref{extension of line bundle, positivity, anayltic} over any smooth partial compactification $S\subset \Sbar$.
\end{rem}
\subsection{Constructing models of $(\NN(A)^0,\L)$}
The construction relies on  the  procedure of resolving the indeterminacy locus of a morphism. For completeness, we explain this in a general setting.
\begin{void}[resolving indeterminacy loci]\label{Resolve indeterminancy locus}
	Let 	$\U\subset \X$  be an dense open subscheme of a noetherian separated integral scheme $\X$. Let $\Sbar$ be an integral noetherian separated scheme. Suppose $\X$ admits a finite type morphism to $\Sbar$ (then $\X\to \Sbar$ is separated). Suppose there is an $\Sbar$-morphism $f\colon\U\to \U$, regarded as a rational map $f\colon \X \dashrightarrow \X$.
	
	Let $\X_1=\overline{\Gamma_f}\subset \X\times_{\Sbar} \X$ be the scheme-theoretic closure  of the graph $\Gamma_f\subset \U\times_{\Sbar}\U$. Let $p_1=\pr_1|_{\X_1}\colon \X_1\to \X$ be the morphism induced by the projection $\pr_1\colon \X\times_{\Sbar} \X\to \X$ onto the first factor. We thus obtain a  diagram
	\begin{equation}
		\begin{tikzcd}
			\X\ar[dashed]{r}{f} & \X\\
			\X_1 \ar{u}{p_1} \ar[swap]{ur}{f_1}
		\end{tikzcd}
	\end{equation}
	where $f_1$ and $p_1$ are (honest)  morphisms.  Note that if $\X/\Sbar$ is projective, then $f_1$ and $p_1$ are projective as well.
\end{void}
\begin{lem}\label{identified open}
	One has an identification
	\[
	p_1^{-1}\U\cong \U.
	\]  It follows that $p_1$ is birational.
\end{lem}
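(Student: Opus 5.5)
The plan is to compute $p_1^{-1}\U$ directly inside $\X\times_{\Sbar}\X$. Since $p_1$ is the restriction of $\pr_1$, we have
\[
p_1^{-1}\U=\X_1\cap \pr_1^{-1}(\U)=\X_1\cap(\U\times_{\Sbar}\X).
\]
Scheme-theoretic closure commutes with restriction to open subschemes; this holds for quasi-compact immersions, and here everything is noetherian. Hence $\X_1\cap(\U\times_{\Sbar}\X)$ is the scheme-theoretic closure of $\Gamma_f$ inside the open subscheme $\U\times_{\Sbar}\X$. So the claim reduces to showing that $\Gamma_f$ is already closed in $\U\times_{\Sbar}\X$.

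The key step is to identify $\Gamma_f$ as the graph of the morphism $j\circ f\colon \U\to\X$ over $\Sbar$, where $j\colon\U\hookrightarrow\X$ is the open immersion. Concretely, $\Gamma_f$ is the image of $(\id,j\circ f)\colon\U\to\U\times_{\Sbar}\X$. Since $\X\to\Sbar$ is separated, the graph morphism of any $\Sbar$-morphism $\U\to\X$ is a closed immersion; this is the base change of the diagonal $\X\to\X\times_{\Sbar}\X$ along $j f\times\id$. We should also check that this graph agrees with the original $\Gamma_f\subset\U\times_{\Sbar}\U$ viewed in the larger space. It does, because $\U\times_{\Sbar}\U$ is open in $\U\times_{\Sbar}\X$ and the graph factors through it.

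Consequently $\Gamma_f$ is a closed subscheme of $\U\times_{\Sbar}\X$. It is reduced, being isomorphic to $\U$, which is integral, so it equals its own scheme-theoretic closure there. This gives $p_1^{-1}\U=\Gamma_f$. Moreover, $p_1$ restricted to $\Gamma_f$ is $\pr_1\circ(\id,jf)=\id_\U$, so $p_1^{-1}\U\to\U$ is an isomorphism. For birationality: $\X_1$ is integral, as the closure of the integral scheme $\Gamma_f\cong\U$. The open $p_1^{-1}\U$ is dense in $\X_1$, since $\X_1$ is the closure of it, and $\U$ is dense in $\X$. Hence $p_1$ is an isomorphism between dense opens, that is, birational.

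The only delicate point is the compatibility of scheme-theoretic closure with open restriction. I would cite the standard fact that for a quasi-compact morphism the scheme-theoretic image commutes with flat base change, in particular with open immersions. Quasi-compactness holds because $\Gamma_f\to\X\times_{\Sbar}\X$ is a morphism from a noetherian scheme.
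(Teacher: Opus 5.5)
Your proposal is correct and follows the paper's argument. Both proofs intersect $\X_1$ with the open $\U\times_{\Sbar}\X$, use that scheme-theoretic closure of a quasi-compact morphism commutes with flat (here open) base change (the paper cites Stacks Tag 081I), and conclude from the closedness of $\Gamma_f$. Your justification of that last step is slightly more precise than the paper's appeal to closedness in $\U\times_{\Sbar}\U$: you note that $\Gamma_f$ is the graph of $j\circ f\colon \U\to\X$, hence closed in $\U\times_{\Sbar}\X$ because $\X\to\Sbar$ is separated. Your explicit birationality argument also spells out what the paper leaves implicit.
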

\begin{proof}
	The identification follows from a chain of isomorphisms
	\[
	p_1^{-1}\U\cong \U\times_{\X} \X_1 \cong \X_1\cap (\U\times_{{\Sbar}} \X)\cong\Gamma_f\cong \U,
	\] where the intersection takes place in $\X\times_{\Sbar} \X$. Except for the second last isomorphism, all the others follow from the definition. To see the remaining one, we specialize \cite[\href{https://stacks.math.columbia.edu/tag/081I}{Tag 081I}]{SP} to the following situation; the left hand side denotes the notation used in the Stacks Projects: $(f\colon X\to Y)=(\U\times_{\Sbar} \X \hookrightarrow \X\times_{{\Sbar}} \X)$ (which is flat); $(g\colon V\to Y)=(\Gamma_f\hookrightarrow \X\times_{{\Sbar}} \X)$ (which is quasi-compact). The $Z'$ in loc.\ cit.\ is just our $\Gamma_f$, since $\Gamma_f\subset \U\times_{\Sbar} \U$ is a closed immersion by separatedness of $f$. To conclude, note that the $Z$ in loc.\ cit.\ is exactly our~$\X_1$.
\end{proof}

Now we apply  Lemma \ref{extension of line bundle, positivity, anayltic} to the following situation. Let $A/S$ be an abelian scheme over a smooth complex curve $S$. Let $S\subset \Sbar$ be a smooth partial compactification. Let $L$ be a symmetric $S$-ample  line bundle on $A$, rigidified along the identity section of $A/S$. By Lemma \ref{ample extension}, it extends uniquely to an $\Sbar$-ample line bundle  $\L$ on $\NN(A)^0$, rigidified along the identity section of $\NN(A)^0/\Sbar$. Write $\U=\NN(A)^0$.

Let $(\X,\overline{\LL})$ be a pair  as in Lemma \ref{extension of line bundle, positivity, anayltic} extending $(\U,\L)$. We then specialize \ref{Resolve indeterminancy locus} to the inclusion $\U\subset \X$ and to the morphism $f\coloneq[2]\colon \U\to \U$, the multiplication-by-2 map over~$\Sbar$.

\begin{conv}
	In the rest of this section we assume our projective models to be \emph{normal}. 
\end{conv}
Note that $\U=\NN(A)^0$ is a regular scheme, in particular normal. Thus, the above normality assumption is harmless, since passing to the normalization of a general projective model of~$\U$ does not alter the normal scheme $\NN(A)^0$. Note also that normal projective models form a directed system via birational morphisms that are isomorphisms over $\U$.

\begin{void}[sequence of projective  models]\label{Sequence of projective models, geometric case}
	Write $\X_0=\X$. For $i\geq 1$  let 
	\[
	\X_i=\overline{\Gamma_{f^i}}\subset \X_{i-1}\times_{\Sbar}\X_{i-1},
	\] be the scheme-theoretic closure of the $i$-th iteration of $f$. By Lemma \ref{identified open}, we have an identified open subscheme  $\U\hookrightarrow \X_i$ for all $i$. We can visualize the situation as follows:
	\begin{equation}\label{Sequence of projective models, visualize}
		\begin{tikzcd}[column sep=large, row sep=large]
			&	\U\ar{r}{f}\ar[hook]{d} \ar[hook, bend right=30]{dd} \ar[hook, bend right=40]{ddd} 
			& \U \ar{r}{f} \ar[hook]{d} 
			& \U\ar[hook]{d} \\
			& \X_0\ar[dashed]{r}{f} 
			& \X_0\ar[dashed]{r}{f} 
			& \X_0 \\
			&	\X_1\ar[dashed]{r}{f} \ar[swap]{u}{p_1} \ar{ur}{f_1}
			& \X_1 \ar[swap]{u}{p_1} \ar{ur}{f_1}
			& \ddots \\
			&	\X_2\ar[swap]{u}{p_2} \ar{ur}{f_2}  & & 
		\end{tikzcd}
	\end{equation}
	Moreover, the solid arrows commute:
	\begin{equation}\label{commutativity of resolving indeterminancy}
		f_i\circ p_{i+1}=p_i\circ f_{i+1}\colon \X_{i+1}\to \X_n.
	\end{equation} This is because they commute over the identified dense open subset $\U$, and the $\X_i$'s are reduced and separated. 
\end{void}
\begin{void}[sequence of model line bundles]\label{Sequence of model line bundles}
	Recall that $\L$ is symmetric and $f=[2]$. We have a unique isomorphism 
	\[
	f^*(\LL|_\U)\xrightarrow{\sim} 4\LL|_\U,
	\]   of rigidified line bundles (by Lemma \ref{equivalence of cat. of rigidified line bundle} and the theorem of the cube for $L\in \Pic(A)$).
	
	Put $\LL_0\coloneq \LL$ and for $i\geq 1$, we define
	\[
	\LL_i\coloneq \frac{1}{4}f_i^*\LL_{i-1}\in \Pic(\X_i)_\Q.
	\] We define    the morphisms $g_i\colon \X_i\to \X_1$ ($i\geq 1$) inductively by
	\begin{displaymath}\label{g_i of seq. of projective models}
		g_1=\id_{\X_1}, ~ g_2=f_2, ~ g_3= g_2\circ f_3, ~ g_4=g_3 \circ f_4
	\end{displaymath}
	and so on. Then we find
	\begin{displaymath}
		\LL_i=\frac{1}{4^{i-1}}g_i^*\LL_1.
	\end{displaymath} It is clear that for $i, j\geq 0$, we have
	\begin{equation}\label{L_i equals L_j when restricted to the open}
		\LL_i|_\U\cong \LL_j|_\U.
	\end{equation}
\end{void} 
\begin{lem}\label{lemma for Cauchy sequence, geometric}
	For $i\geq 0$ consider the rational maps
	\[
	\ell_i\colon \LL_0\dashrightarrow \LL_i
	\] induced by \eqref{L_i equals L_j when restricted to the open}, where $\ell_0=\id$. Consider also the rational maps
	\[
	\ell_{i+1}\ell_i^{-1}\colon \LL_i \dashrightarrow \LL_0 \dashrightarrow \LL_{i+1}.
	\] Then we have an equality of $\Q$-divisors on $\X_{i+1}$:
	\begin{equation}\label{equality of model divisor}
		\divisor_{\X_{i+1}}(\ell_{i+1}\ell_i^{-1})=\frac{1}{4^i}g_{i+1}^*\divisor_{\X_1}(\ell_1).
	\end{equation}
	Consequently, we have an equality of model adelic divisors
	\begin{equation}\label{equality of model adelic divisors, geometric case}
		\divisor(\ell_{i+1}\ell_i^{-1})=\frac{1}{4^i}g_{i+1}^*\divisor(\ell_1).
	\end{equation} 
\end{lem}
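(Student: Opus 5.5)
Induction on $i$ is the natural route. The heart of the statement is an equivariance property: the rational map $\ell_{i+1}\ell_i^{-1}$ is obtained from $\ell_i\ell_{i-1}^{-1}$ by pulling back along $f_{i+1}$ and scaling by $\tfrac14$. We can make this precise as follows.

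For $i=0$ the equality \eqref{equality of model divisor} holds tautologically, because $\ell_1\ell_0^{-1}=\ell_1$ and $g_1=\id_{\X_1}$. For the inductive step we write $\LL_{i+1}=\frac14 f_{i+1}^*\LL_i$ and $\LL_i=\frac14 f_i^*\LL_{i-1}$, where $f_{i+1}\colon \X_{i+1}\to\X_i$ restricts to $f=[2]$ on $\U$, and $p_{i+1}\colon\X_{i+1}\to\X_i$ restricts to the identity on $\U$ (Lemma~\ref{identified open}). The key claim is then an equality of rational maps $\LL_i\dashrightarrow \LL_{i+1}$ on $\X_{i+1}$:
\[
\ell_{i+1}\ell_i^{-1} \;=\; \tfrac14 f_{i+1}^*\bigl(\ell_i\ell_{i-1}^{-1}\bigr),
\]
where we view $\ell_{i+1}\ell_i^{-1}$ as a rational section of $\LL_{i+1}-p_{i+1}^*\LL_i$. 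Since $\LL_i=\frac14 f_i^*\LL_{i-1}$ and $f_i\circ p_{i+1}=p_i\circ f_{i+1}$ by \eqref{commutativity of resolving indeterminancy}, we have $p_{i+1}^*\LL_i=\frac14 f_{i+1}^*p_i^*\LL_{i-1}$. Consequently
\[
\LL_{i+1}-p_{i+1}^*\LL_i=\tfrac14 f_{i+1}^*\bigl(\LL_i-p_i^*\LL_{i-1}\bigr).
\]
Both sides of the claimed equality are therefore rational sections of the same $\Q$-line bundle. Because $\X_{i+1}$ is integral, it is enough to compare them over the dense open $\U$.

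Over $\U$, every identification $\ell_j$ is built from the unique rigidified isomorphism $f^*(\LL|_\U)\cong 4\LL|_\U$, which is canonical by Lemma~\ref{equivalence of cat. of rigidified line bundle} and the theorem of the cube. The two rational sections are compatible with the rigidifications along the identity section, and they agree up to a unit, i.e.\ a global invertible function. The rigidification forces this unit to be $1$ along the identity section. Since $\U\to\Sbar$ has proper-free but geometrically connected fibres, I would argue as follows. Each side is a morphism of rigidified line bundles on $\U=\NN(A)^0$, and such morphisms are unique because $\mathsf{PicRig}$ is a groupoid in which automorphisms of a rigidified bundle are trivial. This triviality follows from the equivalence with $\mathsf{PicRig}(A,e)$ together with $f_*\O_A=\O_S$. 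I expect this bookkeeping of rigidifications to be the main obstacle.

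Taking divisors on the normal model $\X_{i+1}$ commutes with pullback of rational sections along the dominant morphism $f_{i+1}$. This gives
\[
\divisor_{\X_{i+1}}(\ell_{i+1}\ell_i^{-1})=\tfrac14 f_{i+1}^*\divisor_{\X_i}(\ell_i\ell_{i-1}^{-1}).
\]
The induction hypothesis supplies $\frac1{4^{i-1}}g_i^*\divisor_{\X_1}(\ell_1)$ for the divisor on $\X_i$. Using $g_{i+1}=g_i\circ f_{i+1}$, we conclude \eqref{equality of model divisor}. The model adelic statement \eqref{equality of model adelic divisors, geometric case} then follows immediately, since both sides are represented on $\X_{i+1}$ and model adelic divisors are compatible with pullback along the transition maps.
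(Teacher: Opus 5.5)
Your proposal is correct and follows the same route as the paper. Both argue by induction on $i$ and reduce to the one-step identity $\divisor_{\X_{i+1}}(\ell_{i+1}\ell_i^{-1})=\frac14 f_{i+1}^*\divisor_{\X_i}(\ell_i\ell_{i-1}^{-1})$. This identity comes from $\LL_{i+1}-p_{i+1}^*\LL_i\cong\frac14 f_{i+1}^*(\LL_i-p_i^*\LL_{i-1})$, using the definition of the $\LL_j$ and the commutativity $f_i\circ p_{i+1}=p_i\circ f_{i+1}$, and one concludes with $g_{i+1}=g_i\circ f_{i+1}$.

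Your extra check, via rigidifications, that the rational \emph{sections} correspond and not merely the line bundles, is a valid filling-in of a point the paper leaves implicit. If the $\ell_j$ are defined recursively, as $\frac14 f^*$ of the previous identification composed with the fixed rigidified isomorphism $f^*(\LL|_\U)\cong 4\LL|_\U$, that identity of sections over $\U$ even holds by construction.
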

\begin{proof}
	We prove the lemma by induction on $i\geq 0$. The initial case is trivial. Now suppose \eqref{equality of model divisor} holds for all $i\leq n-1$ ($n\in \Z_{>0}$).  By the induction hypothesis, we are reduced to prove
	\begin{equation}\label{Cauchy sequence, induction step}
		\divisor_{\X_{n+1}}(\ell_{n+1}\ell_n^{-1})=\frac{1}{4}f_{n+1}^*\divisor_{\X_n}(\ell_n\ell_{n-1}^{-1}).
	\end{equation} To see this, we note
	\[
	f_{n+1}^*(\LL_n-p_n^*\LL_{n-1})\cong f_{n+1}^*\LL_n-f_{n+1}^*p_n^*\LL_{n-1}=4(\LL_{n+1}-p_{n+1}^*\LL_n)
	\] by the definition of $\LL_i$ and by the commutativity \eqref{commutativity of resolving indeterminancy}. This concludes the proof.
\end{proof}

Next, we  introduce suitable metrics on the models $\LL_i$ for $i\geq 1$.
\begin{void}\label{metrized model line bundle}
	Let $\|\cdot\|_0$ be any \emph{smooth} metric on $\LL_0$. Recall that the $f_i$'s are obtained by the procedure in \ref{Resolve indeterminancy locus}, which are morally multiplication-by-2 maps.  For $i\geq 1$, we   construct  recursively a metric $\|\cdot\|_i$ on $\LL_i$ such that
	\begin{equation}\label{Tate's limiting trick, local archimedean case}
		(\LL_i,\|\cdot\|_i)\cong \frac{1}{4}f_i^*\left(\LL_{i-1},\|\cdot\|_{i-1}\right)\in \overline{\Pic}(\X_i)_\Q
	\end{equation}
	is an isometry. Write $\overline{\LL_i}\coloneq (\LL_i,\|\cdot\|_i)$ for the  metrized line bundles. 
\end{void}

\begin{prop}\label{adelic extension, geometric case}
	The tuple $(\L, (\X_i,\overline{\LL_i},\ell_i)_{i\geq 1})$ represents an analytic adelic line bundle $\widehat{\L}\in \widehat{\Pic}(\NN(A)^0/\C)$ such that $\widehat{\L}|_A\cong \widehat{L}$.
\end{prop}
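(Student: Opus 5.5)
We run Tate's limiting argument at the level of model adelic divisors: show that the model divisors $\overline{\divisor}(\ell_i)$ form a Cauchy sequence for a boundary norm, then compare with Yuan--Zhang's $\widehat{L}$ on $A$.

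\emph{Step 1: the first difference is bounded by a boundary divisor.} Since $\ell_1\colon \LL_0\dashrightarrow\LL_1$ is an isomorphism over $\U$ by \eqref{L_i equals L_j when restricted to the open}, the model adelic divisor $\overline{\divisor}(\ell_1)$ has trivial image in $\Div(\U)$. Concretely, $\divisor_{\X_1}(\ell_1)$ is a $\Q$-divisor supported on $\X_1\ohne\U$, with Green's function $-\log\|\ell_1\|$ computed from the smooth metrics $\|\cdot\|_0$ and $\|\cdot\|_1$. This function is continuous on all of $\X_1^{\an}$ off the support. Choose a metrized boundary divisor $\overline{\scrB}$ of $\U$ on some normal projective model dominating $\X_1$, as in \ref{existence of metrized boundary divisor}. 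Since $\scrB$ has support exactly $\X\ohne\U$, some multiple $C\scrB$ dominates $\pm\divisor(\ell_1)$ as divisors; this uses normality, so that an effective Cartier divisor with full boundary support dominates any boundary-supported $\Q$-Cartier divisor up to a constant. The Green's function $g_\scrB$ is strictly positive and continuous, and the difference of Green's functions is continuous on a compact space. Enlarging $C$ therefore gives
\[
-C\overline{\scrB}\le \overline{\divisor}(\ell_1)\le C\overline{\scrB}.
\]

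\emph{Step 2: geometric decay.} By \eqref{equality of model adelic divisors, geometric case}, together with the metric version coming from the isometries \eqref{Tate's limiting trick, local archimedean case}, we have $\overline{\divisor}(\ell_{i+1}\ell_i^{-1})=4^{-i}g_{i+1}^*\overline{\divisor}(\ell_1)$, which reduces to bounding $g_{i+1}^*\overline{\scrB}$ against $\overline{\scrB}$. The key claim, and the main obstacle, is that there is a uniform constant $c$ with $f^*\overline{\scrB}\le c\,\overline{\scrB}$ on a common model, where $f=[2]$, so that $g_{i+1}^*\overline{\scrB}\le c^{i}\overline{\scrB}$.

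For the algebraic part I would use that $[2]$ preserves $\U$ and that $\U\to\Sbar$ has boundary coming from two sources: the fibers over $\widetilde S\ohne\Sbar$ and the boundary of the non-proper Néron model in $\X$. The former pull back to multiples of themselves, since $[2]$ is an $\Sbar$-morphism. For the latter, $f_1^{-1}(\X\ohne\U)=\X_1\ohne\U$, because $[2]$ is finite on the quasi-finite-over-$\Sbar$ part, so $f^*\scrB$ is supported on the boundary and is bounded by a multiple of $\scrB$. The Green's function inequality then follows by continuity and compactness.

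We only need $c<4$ for $\sum_i c^i/4^i<\infty$. If this fails, I would choose $\overline{\scrB}$ more cleverly: use $\scrB+f^*\scrB/4+\dots$, or replace the boundary by an ample pullback from $\widetilde{S}$, whose pullback under $f$ is invariant. The non-proper part of $\NN(A)^0$ lies over points of $\Sbar\ohne S$, which are contained in a finite set of fibers, so $\scrB$ can be taken as $\tilde p^*$ of a divisor on $\widetilde S$ plus a vertical correction. Vertical-in-$\widetilde S$ divisors are exactly $f$-invariant, which gives $c=1$ up to a fixed constant absorbed at the first step. Summing the geometric series $\sum 4^{-i}$ then yields the Cauchy condition for $(\overline{\divisor}(\ell_i))_i$, so the tuple defines $\widehat{\L}$.

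\emph{Step 3: restriction to $A$.} Restricting the same tower over $S$ gives exactly Yuan--Zhang's construction in \cite[Theorem 6.1.1]{YZ21}, applied to the models $\X_i\supset A$. The independence of $\widehat L$ from the initial model, proved in loc.\ cit., then identifies $\widehat{\L}|_A$ with $\widehat L$ via the pullback of \ref{pullback of adelic line bundles}.
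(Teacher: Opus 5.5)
\textbf{There is a genuine gap in Step 2.} Your Steps 1 and 3 agree with the paper. The paper also bounds $\overline{\divisor}(\ell_1)$ by $r'\overline{\scrB}$ via local coordinates and deduces the restriction statement from $\L|_A\cong L$. You have also correctly isolated the crux. From \eqref{equality of model adelic divisors, geometric case} one only gets $\pm\overline{\divisor}(\ell_{i+1}\ell_i^{-1})\le \frac{r'}{4^i}\,g_{i+1}^*\overline{\scrB}$, so one must compare $g_{i+1}^*\overline{\scrB}=(f^i)^*\overline{\scrB}$ with $\overline{\scrB}$. The paper passes directly to \eqref{inequality of geometric divisor, 2}, i.e.\ it tacitly uses $g_{i+1}^*\scrB\le\scrB$. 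That would be automatic if $\U$ were projective over the base, as in Yuan--Zhang, since $\scrB$ could then be pulled back from the base; it is not automatic here. So the paper does not address this point, and your Step 2 does not close it either.

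\emph{First}, $[2]$ is not finite on $\NN(A)^0$ when some fibre has a toric part. For an elliptic curve with multiplicative reduction, its degree drops from $4$ to $2$ on that fibre, and boundary components of $\X_1$ can be mapped by $f_1$ dominantly onto the identity component. So only $f_1^{-1}(\X\ohne\U)\subseteq\X_1\ohne\U$ holds. That inclusion does give $f^*\overline{\scrB}\le c\,\overline{\scrB}$ for \emph{some} $c$, hence $(f^i)^*\overline{\scrB}\le c^i\overline{\scrB}$. But nothing forces $c<4$, since the one-step constant depends on the model.

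\emph{Second}, your fallback is false. Over $s_0\in\Sbar\ohne S$ the boundary is the complement of $\NN(A)^0_{s_0}$ in the fibre. A boundary divisor therefore never contains the full fibre $\tilde p^*(s_0)$, and vertical divisors other than full fibres are not $f$-invariant. In fact no boundary divisor satisfies $(f^i)^*\scrB\le C\scrB$ uniformly in $i$. Take an elliptic curve with multiplicative reduction at $s_0$, and consider $a\mapsto v_a(\scrB)$ on the skeleton $\R/\Z$ of the Tate curve, normalized by $v_a(t)=1$.
\begin{itemize}
\item This function vanishes only at $a=0$, the identity component, and is piecewise linear with positive slope there.
\item The map $[2]$ acts on the skeleton by $a\mapsto 2a$.
\item Hence at the divisorial valuations $v_a$ with $a$ of size $2^{-i}$, the divisor $(f^i)^*\scrB$ exceeds $\scrB$ by the factor $2^i$.
\end{itemize}
The series $\scrB+\frac14 f^*\scrB+\cdots$ does not rescue this: it is not a model divisor, and its convergence presupposes the very estimate you want.

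\textbf{What is needed.} The argument requires a bound $(f^i)^*\overline{\scrB}\le C\,c^i\,\overline{\scrB}$, uniform in $i$, for some $c<4$. This must hold for both the divisors and their Green's functions. In the example above $c=2$ works, reflecting that $[2]$ scales the toric directions by $2$. This is genuine input about the behaviour of $[2]$ near the non-proper part of $\NN(A)^0$, for instance via Raynaud uniformization or tropicalization. Neither your proposal nor the continuity-and-compactness argument supplies it.
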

\begin{proof}
	Since $\LL_1|_\U\xrightarrow{\sim} \L$, the $\Q$-divisor $\divisor_{\X_1}(\ell_1)$ is supported on the complement of $\U$. It follows that  there is an $r\in \Q_{>0}$ such that the model adelic divisor satisfies
	\begin{equation}\label{inequality of geometric divisor, 1}
		-r\scrB\leq \divisor(\ell_1)\leq r\scrB
	\end{equation}
	(on some birational modification of $\X$). By \eqref{equality of model adelic divisors, geometric case}, we conclude
	\begin{equation}\label{inequality of geometric divisor, 2}
		-\frac{r}{4^i}\scrB\leq \divisor(\ell_{i+1}\ell_i^{-1})\leq \frac{r}{4^i}\scrB.
	\end{equation}
	We want to show that the similar inequalities  hold for the metrized divisors. Let $D=\divisor_\X(\ell_1)$, and let  $g_D$ be a Green's function for $D$. Recall that $g_\scrB$ is a strictly positive Green's function for the boundary divisor $\scrB$ on the  projective model $\X=\X_0$ of $\U$. We claim that there is a constant $c>0$ such that
	\begin{equation}\label{inequality of Green's function}
		|g_D|\leq rg_{\scrB}+c\quad \text{on }\U^\an.
	\end{equation}
	In fact, by compactness of $\X^\an$, it suffices to shows the inequality locally on a neighbourhood of $x\in |\scrB|\subset \X^\an$. By the defining property of Green's functions, we may  assume that 
	\[
	g_D=\sum_{j=1}^n a_j\log|t_j|+O(1),\quad g_\scrB=\sum_{j=1}^n b_j\log|t_j|+O(1),
	\] where $a_j,\, b_j\in \Q_{>0}$ and the $t_j$'s are  suitable local coordinates around $x$ such that $D$ is given by the vanishing $t_1^{a_1}=0,\ldots,t_n^{a_n}=0$, and $\scrB$ is given by the vanishing $t_1^{b_1}=0,\ldots,t_n^{b_n}=0$. Then \eqref{inequality of geometric divisor, 1} implies $a_j\leq rb_j$ for all $j$. Hence \eqref{inequality of Green's function} holds locally around $x$. 
	
	On the other hand, since $g_\scrB$ is strictly positive, there exists $b>0$ such that $g_\scrB\geq b$. Then, for $r'\coloneq r+c/b>r>0$, we have
	\[
	-r'(\scrB,g_\scrB)\leq (D,g_D)\leq r'(\scrB,g_\scrB).
	\] We thus obtain
	\[
	-r'\overline{\scrB}\leq \overline{\divisor}(\ell_1)\leq r'\overline{\scrB}.
	\] And hence, by \eqref{inequality of geometric divisor, 2}, we conclude 
	\[
	-\frac{r'}{4^i}\overline{\scrB}\leq \overline{\divisor}(\ell_{i+1}\ell_i^{-1})\leq \frac{r'}{4^i}\overline{\scrB}
	\] so that $(\overline{\divisor}(\ell_i))_{i\geq 1}$ is a Cauchy sequence w.r.t.\ boundary norm induced by $\overline{\scrB}$. Therefore, the tuple $(\L, (\X_i,\overline{\LL_i},\ell_i)_{i\geq 1})$ represents an analytic adelic line bundle $\widehat{\L}$. 
	
	The last part of the proposition follows from the fact that $\L|_A\cong L$.
\end{proof}

\subsection{Uniqueness of the construction}
Let $S$ be a smooth complex curve. As before, $A/S$ denotes an abelian scheme, and $\NN(A)^0/\Sbar$ denotes the identity component of the Néron model of $A/S$ over a smooth partial compactification $\Sbar\supset S$. Let $j\colon A\hookrightarrow \NN(A)^0$ be the inclusion. 
\begin{lem}
	The  restriction map
	\[
	j^*\colon \widehat{\Pic}(\NN(A)^0/\C)\to \widehat{\Pic}(A/\C).
	\]
	is injective.	
\end{lem}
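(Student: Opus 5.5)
The plan is to reduce to a statement about adelic divisors and compare the two boundary norms. First I would fix the boundary data compatibly. Choose a normal projective model $\X$ of $\U=\NN(A)^0$ with a metrized boundary divisor $\overline{\scrB}_\U=(\scrB_\U,g_{\scrB_\U})$ for $\U$. Since $\Sbar\ohne S$ is a finite set of closed points, the complement $\U\ohne A$ is the union of the fibers $\NN(A)^0_s$ for $s\in \Sbar\ohne S$. These fibers are irreducible, because the fibers of $\NN(A)^0$ are connected and smooth. Let $\mathscr{F}$ be the closure in $\X$ (after blowing up if necessary) of the divisor $\sum_s \NN(A)^0_s$, equipped with some continuous Green's function $g_{\mathscr{F}}$. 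Then $\overline{\scrB}_A:=\overline{\scrB}_\U+\overline{\mathscr{F}}$ (after adding a constant) is a metrized boundary divisor for $A$ on the same model. Every projective model of $\U$ is also a projective model of $A$, so $j^*$ is literally given on tuples by $(\L,(\X_i,\overline{\LL_i},\ell_i))\mapsto(\L|_A,(\X_i,\overline{\LL_i},\ell_i|_A))$. Moreover, since $\overline{\scrB}_\U\le\overline{\scrB}_A$, a sequence that is Cauchy for $\overline{\scrB}_\U$ is also Cauchy for $\overline{\scrB}_A$.

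Now suppose $j^*\widehat{\L}$ is trivial. This gives an isomorphism $\iota\colon \L|_A\xrightarrow{\sim}\O_A$, i.e.\ a rational section $\sigma$ of $\L$ that is regular and nowhere vanishing on $A$, such that the sequence $\overline{\scrD}_i:=\overline{\divisor}(\ell_i\sigma)$ of model adelic divisors is Cauchy for $\overline{\scrB}_A$ with limit $0$. The differences $\overline{\scrD}_j-\overline{\scrD}_i=\overline{\divisor}(\ell_j\ell_i^{-1})$ are supported off $\U$ and are already Cauchy for $\overline{\scrB}_\U$, because $\widehat{\L}$ is adelic on $\U$. Hence it suffices to show two things: the divisor $D:=\divisor_\U(\sigma)$ vanishes, and $\|\overline{\scrD}_i\|_{\overline{\scrB}_\U}\to 0$. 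Together these give that $\sigma$ extends to an isomorphism $\L\cong\O_\U$ satisfying the Cauchy condition of Definition~\ref{iso of adelic line bundles, arch. case}.

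For the first point, write $D=\sum_s a_s\NN(A)^0_s$. The inequality $-\epsilon\overline{\scrB}_A\le\overline{\scrD}_i\le\epsilon\overline{\scrB}_A$ holds for arbitrarily small $\epsilon$ and large $i$. Compare coefficients along the component $\overline{\NN(A)^0_s}$: there $\overline{\scrD}_i$ has the fixed coefficient $a_s$, while $\scrB_A$ has a fixed coefficient $b_s$. This forces $|a_s|\le\epsilon b_s$, hence $a_s=0$.

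For the second point, $\scrD_i$ is now supported on components lying outside $\U$. Since $\mathscr{F}$ has no such components, the coefficients of $\scrB_A$ and $\scrB_\U$ agree there. So the divisorial part of $-\epsilon\scrB_A\le\scrD_i\le\epsilon\scrB_A$ already gives $-\epsilon\scrB_\U\le\scrD_i\le\epsilon\scrB_\U$.

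The main obstacle is the Green's functions. We know $|g_{\scrD_i}|\le\epsilon(g_{\scrB_\U}+g_{\mathscr{F}})$, and we want a bound by $\epsilon' g_{\scrB_\U}$ with $\epsilon'\to0$. The term $g_{\mathscr{F}}$ has logarithmic poles along $\mathscr{F}$, whereas $g_{\scrD_i}$ is continuous across $\mathscr{F}\cap\U^\an$. I would handle this as in the proof of Proposition~\ref{adelic extension, geometric case}, by a local analysis on the compact space $\X^\an$. Near a point of $|\mathscr{F}|$ not in $|\scrB_\U|$, continuity of $g_{\scrD_i}$ together with the bound holding on a dense set shows that the ratio $|g_{\scrD_i}|/g_{\scrB_\U}$ is controlled by its values nearby. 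Near $|\scrB_\U|$, the singular parts are governed by the coefficients just compared. If this direct approach fails to give uniformity in $i$, I would instead exploit the freedom in choosing $g_{\mathscr{F}}$: make it bounded by $g_{\scrB_\U}$ away from a small neighbourhood of $\mathscr{F}\cap\U^\an$, and use the continuity of the fixed limit object to shrink that neighbourhood.
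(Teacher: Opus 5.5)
The gap is in your second half, the proof that $\|\overline{\scrD}_i\|_{\overline{\scrB}_\U}\to 0$. The reduction to this statement and your first point ($a_s=0$) are fine. You try to obtain it by comparing $\overline{\scrB}_A$ with $\overline{\scrB}_\U$, and that cannot work.

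Your claim that $\scrB_A$ and $\scrB_\U$ have the same coefficients on all components outside $\U$ is false once $A$ has bad reduction at some $s$. Then $\NN(A)^0_s$ is not proper, so its closure meets $\X\ohne\U$. Any prime divisor $E$ of an $\X_i$ (or of the blow-up making $\mathscr{F}$ Cartier) whose centre lies in $|\mathscr{F}|\ohne\U$ is outside $\U$ but has $\mathrm{ord}_E\mathscr{F}>0$, and nothing in the construction of the $\X_i$ excludes such $E$.

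Nor is there a uniform substitute. Iteratively blowing up the intersection of the strict transform of $\mathscr{F}$ with the newest exceptional divisor gives $E_k$ with $\mathrm{ord}_{E_k}\mathscr{F}$ growing linearly in $k$ and $\mathrm{ord}_{E_k}\scrB_\U$ constant. So $\scrD_i=E_{k_i}$ with $k_i\to\infty$ has the divisorial part of its $\overline{\scrB}_A$-norm tending to $0$, while its $\overline{\scrB}_\U$-norm stays bounded below. Hence $\overline{\scrB}_A$-convergence to $0$ together with $\scrD_i|_\U=0$ does not imply $\overline{\scrB}_\U$-convergence.

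The same unboundedness of $g_{\mathscr{F}}/g_{\scrB_\U}$, near $(\U\ohne A)^{\an}$ and near $|\mathscr{F}^{\an}|\cap|\scrB_\U^{\an}|$, defeats any purely local analysis of Green's functions or choice of $g_{\mathscr{F}}$. That is exactly what your last paragraph leaves open. The $\overline{\scrB}_\U$-Cauchy property of $(\overline{\scrD}_i)$ must be used essentially at this point, and your sketch never uses it.

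The repair is to let $j\to\infty$ pointwise.
\begin{itemize}
\item Choose $\epsilon_i\to 0$ with $-\epsilon_i\overline{\scrB}_\U\le\overline{\scrD}_j-\overline{\scrD}_i\le\epsilon_i\overline{\scrB}_\U$ for $j\ge i$, and $\epsilon'_j\to 0$ with $-\epsilon'_j\overline{\scrB}_A\le\overline{\scrD}_j\le\epsilon'_j\overline{\scrB}_A$.
\item Fix $i$, a prime divisor $E$ of $\X_i$, and a point $x\in A(\C)$. For $j\ge i$ you have $|\mathrm{ord}_E(\scrD_j-\scrD_i)|\le\epsilon_i\,\mathrm{ord}_E\scrB_\U$ and $|g_{\scrD_j}(x)-g_{\scrD_i}(x)|\le\epsilon_i\,g_{\scrB_\U}(x)$.
\item At the same time $|\mathrm{ord}_E\scrD_j|\le\epsilon'_j\,\mathrm{ord}_E\scrB_A\to 0$ and $|g_{\scrD_j}(x)|\le\epsilon'_j\,g_{\scrB_A}(x)\to 0$, because $\mathrm{ord}_E\scrB_A$ and $g_{\scrB_A}(x)$ are fixed finite numbers.
\item Letting $j\to\infty$ gives $|\mathrm{ord}_E\scrD_i|\le\epsilon_i\,\mathrm{ord}_E\scrB_\U$ for every $E$. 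This is the divisorial inequality on the normal $\X_i$, and it contains your first point.
\item It also gives $|g_{\scrD_i}|\le\epsilon_i\,g_{\scrB_\U}$ on $A(\C)$. This extends to $\U(\C)$ by continuity (now $\scrD_i|_\U=0$) and density of $A(\C)$ in $\U(\C)$.
\end{itemize}
Thus $\|\overline{\scrD}_i\|_{\overline{\scrB}_\U}\le\epsilon_i\to 0$.

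This limit-plus-density mechanism is what underlies \cite[Corollary 3.4.2]{YZ21}, which the paper simply cites: adelic divisors are determined by their continuous Green's functions on the analytic space, and $A^{\an}$ is dense in $\U^{\an}$. With the repair, your argument becomes a self-contained, model-level proof of that special case.
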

\begin{proof}
	Since $A$ and $\NN(A)^0$ are regular  (hence normal) quasi-projective $\C$-schemes   and the inclusion $j$ induces an isomorphism of their function fields, we conclude the claim from \cite[Corollary 3.4.2]{YZ21}. We remark that the cited corollary uses the interpretation that adelic line bundles are analytic line bundles on suitable Berkovich analytic spaces.
\end{proof}
We observe a rigidified version of the lemma (the subsequent discussion can be clearly  placed in a more general setting). Let $e\colon \Sbar\to \NN(A)^0$ be the identity section of $\NN(A)^0/\Sbar$.
\begin{defn}\label{Def: rigidified adelic line bundle}
	A \textbf{rigidification} of an adelic line bundle $\widehat{\L}$ on $\NN(A)^0$ along $e$ is an isomorphism
	\[
	\phi\colon e^*\widehat{\L}\xrightarrow{\sim}\widehat{\O}_{\Sbar}
	\] of adelic line bundles on $\Sbar$, where $\widehat{\O}_{\Sbar}$ denotes the trivial adelic line bundle. The pair $(\widehat{\L},\phi)$ is called a \textbf{rigidified adelic line bundle}. One defines morphisms of rigidified adelic line bundles in a similar manner as in  Definition \ref{rigidified line bundle}. We let 
	\[
	\widehat{\Pic}(\NN(A)^0/\C)^e\subset \widehat{\Pic}(\NN(A)^0/\C)
	\] be the subgroup of isomorphism classes of rigidified adelic line bundles along $e$. Let $e_A=e|_S\colon S\to A$ be the identity section of $A/S$. We define in a similar way the subgroup
	\[
	\widehat{\Pic}(A/\C)^{e_A}\subset \widehat{\Pic}(A/\C)
	\] of isomorphism classes of  adelic line bundles on $A$ rigidified along $e_A$.
\end{defn}
\begin{rem}\label{rigidified underlying line bundle}
	If $\widehat{\L}$ is a rigidified adelic line bundle, then its underlying line bundle $\L$ admits an induced rigidification.
\end{rem}

\begin{lem}\label{injection of rigidified adelic line bundle}
	There exists a commutative diagram
	\[
	\begin{tikzcd}
		\widehat{\Pic}(\NN(A)^0/\C)  \ar[hook]{r}{j^*} & \widehat{\Pic}(A/\C) \\	
		\widehat{\Pic}(\NN(A)^0/\C)^e \ar[hook]{u}\ar{r}& \widehat{\Pic}(A/\C)^{e_A}\ar[hook]{u} 
	\end{tikzcd}
	\] Therefore, the lower horizontal map is also injective.
\end{lem}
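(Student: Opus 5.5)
To produce the diagram, I will first define the lower horizontal map and then check commutativity. The final injectivity claim will then follow formally.

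\emph{Defining the lower map.} Let $(\widehat{\L},\phi)$ be a rigidified adelic line bundle on $\NN(A)^0$ along $e$. I send it to $(j^*\widehat{\L}, \phi|_S)$, using the pullback of adelic line bundles from \ref{pullback of adelic line bundles}. The identity $e\circ \iota = j\circ e_A$, where $\iota\colon S\hookrightarrow \Sbar$ is the inclusion, gives a canonical isomorphism $e_A^*j^*\widehat{\L}\cong \iota^*e^*\widehat{\L}$. Composing with $\iota^*\phi$ and the identification $\iota^*\widehat{\O}_{\Sbar}\cong\widehat{\O}_S$ yields a rigidification of $j^*\widehat{\L}$ along $e_A$. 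Functoriality of pullbacks, namely compatibility with composition and tensor products as in \cite[\S 2.5.5]{YZ21}, shows two things. First, isomorphic rigidified objects go to isomorphic rigidified objects. Second, the map is a group homomorphism. Hence it descends to isomorphism classes, i.e.\ to the subgroups $\widehat{\Pic}(\cdot)^{e}$ and $\widehat{\Pic}(\cdot)^{e_A}$.

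\emph{Commutativity.} The vertical arrows are the inclusions of Definition \ref{Def: rigidified adelic line bundle}, which forget the rigidification. The upper map is $j^*$ on underlying adelic line bundles. Commutativity is therefore tautological, since both paths send $(\widehat{\L},\phi)$ to the class of $j^*\widehat{\L}$.

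\emph{Injectivity.} The left vertical arrow is an inclusion and the upper map $j^*$ is injective by the preceding lemma. So the composite from the lower-left corner to the upper-right corner is injective. By commutativity this composite equals the right vertical inclusion composed with the lower map, and hence the lower map is injective.

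The only point requiring care is that the ``subgroup'' interpretation is consistent. Following the definition, $\widehat{\Pic}(\NN(A)^0/\C)^e$ consists of isomorphism classes of adelic line bundles admitting a rigidification, so the vertical maps really are inclusions of classes. With this reading nothing beyond the upper injectivity is needed. I do not expect a genuine obstacle; the main check is the canonical identification of $e_A^*j^*$ with $\iota^*e^*$ for adelic line bundles, which follows from the functoriality of pullbacks along composites.
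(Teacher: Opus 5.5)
Your proposal is correct and follows the same route as the paper, whose entire proof is that the lemma ``follows immediately from the definition'': you define the lower map by pulling back the rigidification via $e\circ\iota=j\circ e_A$, observe that commutativity is tautological, and deduce injectivity from that of $j^*$, since a map whose composite with another map is injective must itself be injective. Your explicit check of the identification $e_A^*j^*\widehat{\L}\cong\iota^*e^*\widehat{\L}$, and of compatibility with isomorphisms and group structure, is exactly what the paper leaves implicit.
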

\begin{proof}
	This follows immediately from the definition.
\end{proof}
\begin{prop}\label{uniqueness of adelic extension, geometric case}
	There is a unique rigidified adelic line bundle $\widehat{\L}$  on $\NN(A)^0$  such that $[n]^*\widehat{\L}=n^2\widehat{\L}$ for all $n\in \Z$.
\end{prop}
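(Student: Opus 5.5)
Uniqueness and existence are handled separately, and uniqueness is the easier half. Suppose $\widehat{\L}$ and $\widehat{\L}'$ are two rigidified adelic line bundles on $\NN(A)^0$ with $[n]^*\widehat{\L}=n^2\widehat{\L}$ and $[n]^*\widehat{\L}'=n^2\widehat{\L}'$ for all $n$. By Lemma~\ref{injection of rigidified adelic line bundle}, the map $\widehat{\Pic}(\NN(A)^0/\C)^e\to\widehat{\Pic}(A/\C)^{e_A}$ is injective. It therefore suffices to show that $j^*\widehat{\L}\cong j^*\widehat{\L}'$ as rigidified adelic line bundles on $A$.

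Both restrictions satisfy the same $[n]$-equivariance on the abelian scheme $A/S$. Their difference $\widehat{\mathcal{D}}$ is rigidified and satisfies $[2]^*\widehat{\mathcal{D}}=4\widehat{\mathcal{D}}$. Its underlying line bundle is then rigidified with $[2]^*D=4D$. I first argue that $D$ is trivial as a rigidified line bundle on $A$; if the intended statement fixes the underlying bundle $\L$ as in Proposition~\ref{adelic extension, geometric case}, this is immediate, and otherwise it follows from the theorem of the cube together with Lemma~\ref{equivalence of cat. of rigidified line bundle}. Given that, $\widehat{\mathcal{D}}$ is represented by a trivial bundle carrying a metric, i.e.\ by a ``function'' $\phi$ in the completed sense, with $\phi\circ[2]=4\phi$ and $\phi\circ e=0$.

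To conclude $\phi=0$, I would use the boundary norm. Pullback by $[2]$ is bounded for the boundary norm after replacing $\overline{\scrB}$ by a comparable boundary divisor, by the same resolution-of-indeterminacy argument as in \ref{Resolve indeterminancy locus}. Hence $\|\widehat{\mathcal{D}}\|=4^{-k}\|([2]^k)^*\widehat{\mathcal{D}}\|\le 4^{-k}C^k\|\widehat{\mathcal{D}}\|$. For this to give $0$ one needs $C<4$, which is not clear. So I would first try Tate's argument on the metric directly: $\phi$ is a bounded continuous function on every compact subset of fibres, and $\phi=4^{-k}\phi\circ[2^k]$ forces $\phi=0$ fibrewise. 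Finiteness of $\|\widehat{\mathcal{D}}\|$, together with $[2]$ preserving $A$ over $S$, supplies the uniform control that makes this work. I expect this step to be the main obstacle.

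For existence, I take $\widehat{\L}$ from Proposition~\ref{adelic extension, geometric case}, built by Tate's limit for $[2]$. By construction $[2]^*\widehat{\L}=4\widehat{\L}$, since the sequence $\overline{\LL_i}$ is shifted by $f$. I rigidify $\widehat{\L}$ via the rigidification of $\L$ and normalize the metric along $e$, adjusting $\|\cdot\|_0$ so that $e^*$ is trivial; the limit then stays trivial because $[2]\circ e=e$. For general $n$, the bundle $[n]^*\widehat{\L}-n^2\widehat{\L}$ is rigidified and commutes with $[2]$-scaling by $4$. By the uniqueness argument applied to it and to $0$, it is trivial. The case $n=0$ holds by the rigidification, and negative $n$ follows from symmetry (Remark~\ref{symmetry of Poincare bundle}) together with the same uniqueness argument.
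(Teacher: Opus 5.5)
Your proposal is correct and shares the paper's skeleton. Existence comes from the Tate-limit tuple of Proposition~\ref{adelic extension, geometric case}, where shifting the sequence gives $[2]^*\widehat{\L}\cong 4\widehat{\L}$. Uniqueness is reduced to $A$ via Lemma~\ref{injection of rigidified adelic line bundle}. General $n$ follows by applying the $[2]$-uniqueness to $[n]^*\widehat{\L}$ against $n^2\widehat{\L}$.

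The genuine difference is the core step on $A$. The paper simply quotes the uniqueness part of \cite[Theorem~6.1.3]{YZ21} for $\widehat{L}=j^*\widehat{\L}$. You instead reprove it by Tate's argument. The difference $\widehat{\mathcal D}$ becomes a continuous function $\phi$ on $A(\C)$, normalized by the rigidification (which kills the ambiguity $\log|u|$, $u\in\O(S)^*$), with $\phi\circ[2]=4\phi$. Hence $\phi=0$ on each compact fibre $A_s(\C)$. Your route is self-contained and shows exactly why one must pass from $\NN(A)^0$ to $A$ first: the fibres of $\NN(A)^0$ at points of bad reduction are not compact. The paper's citation is shorter and transfers verbatim to the arithmetic case of \S\ref{Arithmetic Case}, where your argument would have to be redone on Berkovich fibres at every place.

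The step you flag as the main obstacle is not one, and the boundary-norm detour with $C<4$ can be dropped. Let $(\overline{\scrD}_i)$, with Green's functions $g_i$, be a Cauchy sequence representing $\widehat{\mathcal D}$. Then $\phi$ is the locally uniform limit of the $g_i$ on $A(\C)$. Letting $j\to\infty$ in $|g_j-g_i|\le\epsilon_i g_{\scrB}$ gives $|g_i|\le\epsilon_i g_{\scrB}$ on $A(\C)$ once $\phi=0$. On normal models, a nonnegative Green's function forces the divisor to be effective (near a general point of a component with negative multiplicity the Green's function tends to $-\infty$). So $-\epsilon_i\overline{\scrB}\le\overline{\scrD}_i\le\epsilon_i\overline{\scrB}$, and $\widehat{\mathcal D}=0$. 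Pointwise vanishing suffices; no control uniform in $s$ is needed.

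Two small corrections:
\begin{enumerate}
\item Your ``otherwise'' branch is false. By the theorem of the cube, every rigidified symmetric $D$ satisfies $[2]^*D\cong 4D$, so this gives no triviality. The statement only makes sense as uniqueness among adelic extensions of the fixed $\L$, which is how the paper reads it.
\item Negative $n$ needs no appeal to Remark~\ref{symmetry of Poincare bundle}, which concerns $\overline{\PP}$. Here $\L$ is symmetric by hypothesis, so $[n]^*\L\cong n^2\L$ as rigidified bundles for all $n$. Your general-$n$ argument therefore already covers negative $n$, since $[n]$ commutes with $[2]$.
\end{enumerate}
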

\begin{proof}
	By Proposition \ref{adelic extension, geometric case}, there is an analytic adelic line bundle $\widehat{\L}$ on $\NN(A)^0$ such that $[2]^*\widehat{\L}\cong 4\widehat{\L}$. Fix a rigidification $[0]^*\widehat{\L}\xrightarrow{\sim} \widehat{\O}_{\Sbar}$. Moreover,  the restriction of $\widehat{\L}$ over  $A$ yields the adelic line bundle~$\widehat{L}$ from \cite[Theorem 6.1.3]{YZ21} (denoted by $\overline{L}$ in loc.\ cit.), which is  uniquely determined  by the property  $[n]^*\widehat{L}=n^\epsilon\widehat{L}$ for all $n\in \Z$; in particular, taking $n=0$ shows that $\widehat{L}$ is rigidified along $e_A$. Combined with Lemma \ref{injection of rigidified adelic line bundle} and Remark \ref{continuous extension of metrized line bundle}, we see that there is at most one analytic adelic line bundle~$\widehat{\L}$ such that $[2]^*\widehat{\L}\cong 4\widehat{\L}$. To prove the last isomorphism  for all $n$, we proceed exactly as in \cite[Theorem 6.1.2(2)]{YZ21}. Let $n\in \Z\ohne\{0,1,2\}$. We have
	\[
	[2]^*[n]^*\widehat{\L}\cong 4[n]^*\widehat{\L}
	\] so that for $\widehat{\L}'\coloneq n^{-2}[n]^*\widehat{\L}$, 
	\[
	[2]^*\widehat{\L}'\cong 4\widehat{\L}'.
	\] Since $\widehat{\L}'$ is an adelic extension of $\L$, by the uniqueness for $n=2$, we must have $\widehat{\L}'\cong\widehat{\L}$, i.e.\ $[n]^*\widehat{\L}\cong n^2\widehat{\L}$. 
\end{proof}

\begin{rem}[admissible metrized extension over the Néron model]\label{continuous extension of metrized line bundle}
	Let $(\L,(\X_i,\overline{\LL_i},\ell_i)_{i\geq 1})$ be a tuple representing the analytic adelic line bundle~$\widehat{\L}$ on $\U\coloneq \NN(A)^0$ from Proposition \ref{adelic extension, geometric case}. Recall $\L|_A\xrightarrow{\sim} L$ as rigidified line bundles (and~$L$ was symmetric and relatively ample). Let $\|\cdot\|_A$ be the smooth admissible metric on~$L$, normalized using the rigidification of $L$, and write  $\overline{L}=(L,\|\cdot\|_A)$. By construction,  $\overline{\LL_i}|_A\cong \overline{L}$ is an isometry for every $i\geq 1$. By the analytic density $A^\an\subset \U^\an$, $\widehat{\L}$ induces a unique continuous metric $\|\cdot\|_\U$ on $\L$ such that
	\[
	[2]^*(\L,\|\cdot\|_\U)\cong 4 (\L,\|\cdot\|_\U)
	\] and such that $e^*\|\cdot\|_U$ coincides with a prescribed metric on $\O_{\Sbar}$. In other words, $(\L,\|\cdot\|_\U)$ is an admissible metrized line bundle extending $(L,\|\cdot\|_A)$.
\end{rem}

\subsection{Positivity (or integrability) of the adelic extension}
Recall  the positivity notions in \ref{nefness and integrability}.
\begin{defn}
	Let $\widehat{\L}$ be an adelic line bundle on an integral scheme $\U$ that is quasi-projective over $\C$. We call $\widehat{\L}$ \textbf{strongly nef} if it is isomorphic to an adelic line bundle on~$\U$ represented by  a tuple $(\L,(\X_i,\overline{\LL_i},\ell_i)_{i\geq 1})$, where the metrized line bundles $\overline{\LL_i}$ on $\X_i$ are nef for all $i\geq 1$. Informally,   $\widehat{\L}$  is said to be strongly nef if it is the limit of nef model line bundles. The adelic line bundle $\widehat{\L}$ is called \textbf{nef} if there exists a strongly nef adelic line bundle $\widehat{N}$ on $\U$ and a sequence $(r_i)_{i\geq 1}$ of positive rational numbers converging to zero such that $\widehat{\L}+r_i\widehat{N}$ is strongly nef for all $i\geq 1$. An adelic line bundle is called \textbf{integrable} if it is a difference of two  nef adelic line bundles. 
\end{defn}
\begin{rem}\label{integrability using just nefness}
	Suppose $\widehat{\L}$ is an integrable adelic line bundle on $\U$. Then it can be written as the difference of two \emph{strongly} nef adelic line bundles. Thus, our definition of integrability is equivalent to that of Yuan--Zhang \cite[\S 2.5.2]{YZ21}. Indeed, suppose $\widehat{\L}=\widehat{\L_1}-\widehat{\L_2}$ for some nef adelic line bundles $\widehat{\L_i}$. By definition, there are strongly nef adelic line bundles $\widehat{N_i}$ such that $\widehat{\L_i}+\widehat{N_i}$ are strongly nef for $i=1,2$. Then
	\[
	\widehat{\L}\cong(\widehat{\L_1}+\widehat{N_1}+\widehat{N_2})-(\widehat{\L_2}+\widehat{N_2}+\widehat{N_1})
	\] is a decomposition of $\widehat{L}$ into two strongly nef adelic line bundles.
\end{rem}
Let $A/S,\, \NN(A)^0/\Sbar$ be as before,  but now we let $L$  be  either a symmetric or an antisymmetric line bundle on $A$, rigidified along the identity section of $A/S$. By Lemma \ref{equivalence of cat. of rigidified line bundle}, we still have a rigidified symmetric respectively antisymmetric line bundle $\L$ on $\NN(A)^0$.
\begin{lem}\label{nef adelic extension, geometric case}
	Suppose $\L$ is symmetric and relatively ample. The analytic adelic line bundle $\widehat{\L}$ from Proposition \ref{uniqueness of adelic extension, geometric case} is nef.
\end{lem}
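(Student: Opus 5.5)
The plan is to follow Yuan--Zhang's proof of nefness in \cite[Theorem 6.1.3]{YZ21}: perturb the initial model by a pullback from the base so that every model in the Tate-limit sequence becomes nef, then show that the perturbation disappears in the limit with a coefficient tending to zero.

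First, by Lemma \ref{extension of line bundle, positivity, anayltic} (restricted to the partial compactification), we may choose the initial model $(\X_0,\overline{\LL_0})$ of $(\U,\L)$ together with a nef metrized line bundle $\overline{\mathscr{M}}$ on $\widetilde{S}$ such that $\overline{\LL_0}+\tilde p^*\overline{\mathscr{M}}$ is nef on $\X_0$. Here the smooth metric on $\LL_0$ is the one used in \ref{metrized model line bundle}. The models $\X_i$ lie over $\Sbar$, and $f_i$ commutes with the structure maps to $\widetilde{S}$, because $[2]$ is an $\Sbar$-morphism and this holds on the dense open $\U$. Write $\pi_i\colon\X_i\to\widetilde{S}$ for the structure maps. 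Pulling back by $\tfrac14 f_i^*$ and using the isometries \eqref{Tate's limiting trick, local archimedean case}, an induction gives that
\[
\overline{\LL_i}+\tfrac{1}{4^i}\pi_i^*\overline{\mathscr{M}}
\]
is nef on $\X_i$. This uses only that pullbacks of nef metrized line bundles along morphisms of projective varieties are nef, and that positive rational multiples of nef bundles are nef.

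Next, for fixed $m\ge 1$, set $\widehat{N}:=\pi^*\overline{\mathscr{M}}$. This is the model adelic line bundle on $\U$ given by pulling $\overline{\mathscr{M}}$ back to $\X_0$, and it is strongly nef. For $i\ge m$ we have $4^{-i}\le 4^{-m}$, and $\pi_i^*\overline{\mathscr{M}}$ is nef. Hence the models
\[
\overline{\LL_i}+\tfrac{1}{4^m}\pi_i^*\overline{\mathscr{M}}=\Bigl(\overline{\LL_i}+\tfrac{1}{4^i}\pi_i^*\overline{\mathscr{M}}\Bigr)+\Bigl(\tfrac{1}{4^m}-\tfrac1{4^i}\Bigr)\pi_i^*\overline{\mathscr{M}}
\]
are nef. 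The sequence indexed by $i\ge m$ represents $\widehat{\L}+4^{-m}\widehat{N}$: the Cauchy condition is unaffected by adding the fixed model $\pi^*\overline{\mathscr{M}}$, and dropping finitely many terms does not change the limit. So $\widehat{\L}+4^{-m}\widehat{N}$ is strongly nef for every $m$. Taking $r_m=4^{-m}\to0$ shows that $\widehat{\L}$ is nef in the sense of the definition, where $\widehat{\L}$ is identified with the bundle of Proposition \ref{uniqueness of adelic extension, geometric case} by its uniqueness.

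The main obstacle I expect is the initial step. We need the starting metrized model to be nef after a twist by the base, with a metric on $\LL_0$ that is genuinely semipositive, not merely smooth. We also need the comparison of $\pi_i\circ f_i$ with $\pi_{i-1}$ on the normalized models. A further point is that when $\Sbar$ is only a partial compactification, $\mathscr{M}$ lives on $\widetilde{S}$ and the models must map to $\widetilde{S}$. I would address this by building the models over $\widetilde{S}$ from the start, blowing up outside $\U$ as in the proof of Lemma \ref{extension of line bundle, positivity}, and taking the closures of the graphs in $\X\times_{\widetilde S}\X$.
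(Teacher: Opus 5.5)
Your proposal is correct and follows essentially the same route as the paper. You start from the model of Lemma~\ref{extension of line bundle, positivity, anayltic} so that $\overline{\LL_0}+\tilde p^*\overline{\mathscr{M}}$ is nef, propagate nefness of $\overline{\LL_i}+4^{-i}\pi_i^*\overline{\mathscr{M}}$ through $\tfrac14 f_i^*$, and use the truncated sequences to show that $\widehat{\L}+4^{-m}\widehat{N}$ is strongly nef; your bookkeeping via $\pi_{i-1}\circ f_i=\pi_i$ and your suggestion to form the graph closures over $\widetilde{S}$ when $\Sbar$ is only a partial compactification are slightly more careful than the paper's write-up, which writes $\tfrac{1}{4^i}f_i^*\overline{\LL'}$ and uses the coefficient $2^{-a}$.
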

\begin{proof}
	The proof idea is the same as in \cite[Theorem 6.1.1]{YZ21}. Since $\L$ is relatively ample, we can use  Lemma \ref{extension of line bundle, positivity, anayltic} to find a projective  model $\X$ of $\NN(A)^0$, a metrized $\Q$-line bundle $\overline{\LL}$ on $\X$ and a metrized line bundle $\overline{N}$ coming from the base such that  $\overline{\LL'}\coloneq \overline{\LL}+\overline{N}$ is nef. We  run the construction in \ref{Sequence of projective models, geometric case} with the initial input $(\X,\overline{\LL})$ and obtain a tuple $(\L,(\X_i,\overline{\LL_i},\ell_i)_{i\geq 1})$ representing an adelic line bundle $\widehat{\L}$. Note that the $f_i$'s in loc.\ cit.\ are projective morphisms, hence they preserve nefness. With the beginning  nef input $\overline{\LL'}$, each 
	\[
	\frac{1}{4^i}f_i^*\overline{\LL'}=\overline{\LL_i}+\frac{1}{4^i}\overline{N}\in \Pic(\X_i)_\Q
	\] is nef. This implies that for any positive integer $a$, the line bundle $\overline{\LL_{i+a}}+2^{-a}\overline{N}$ is nef. Note that for any positive integer $a$, the tuple $(\L,(\X_{i+a},\overline{\LL_{i+a}},\ell_{i+a})_{i\geq 1})$  represents the same adelic line bundle~$\widehat{\L}$, and by construction, $\widehat{\L}+2^{-a}\overline{N}$ is a limit of nef model line bundles, i.e.\ it is strongly nef. It follows that $\widehat{\L}$ is nef. 
\end{proof}
\begin{lem}\label{integrabl adelic extension, symmetric, geometric}
	Suppose $\L$ is symmetric. There is an  integrable analytic adelic extension~$\widehat{\L}$ of $\L$, which is unique with the property $[n]^*\widehat{\L}\cong n^2\widehat{\L}$ for all $n\in \Z$.
\end{lem}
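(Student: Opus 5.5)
The plan is to reduce to the relatively ample case of Lemma~\ref{nef adelic extension, geometric case} by writing $\L$ as a difference of two symmetric, rigidified, relatively ample line bundles. First we fix a symmetric rigidified relatively ample line bundle $\mathcal{H}$ on $\NN(A)^0$. To get one, take a symmetric rigidified $S$-ample $H$ on $A$; for instance, if $H_0$ is $S$-ample then $H_0+[-1]^*H_0$ is symmetric and ample, and we rigidify it. Then extend $H$ by Lemma~\ref{ample extension}. By Lemma~\ref{GW, 13.62}(3), applied to $\NN(A)^0\to \Sbar$ after shrinking to a quasi-compact situation (the base is a curve, so this is fine), there is $m$ such that $\L+m\mathcal{H}$ is relatively ample. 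It is symmetric and rigidified, being a sum of such. So
\[
\L\cong (\L+m\mathcal{H}) - m\mathcal{H}
\]
as rigidified symmetric line bundles, and both terms on the right are symmetric and relatively ample.

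Next, apply Proposition~\ref{uniqueness of adelic extension, geometric case} to each term. This gives rigidified adelic extensions $\widehat{\L+m\mathcal{H}}$ and $\widehat{\mathcal{H}}$ satisfying $[n]^*(-)\cong n^2(-)$. By Lemma~\ref{nef adelic extension, geometric case} both are nef. Set
\[
\widehat{\L}\coloneq\widehat{\L+m\mathcal{H}}-m\widehat{\mathcal{H}}.
\]
This is integrable by definition, it extends $\L$, and it satisfies $[n]^*\widehat{\L}\cong n^2\widehat{\L}$ for all $n$, because pullback and the identities are additive.

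For uniqueness, suppose $\widehat{\L}'$ is another adelic extension of $\L$ with $[n]^*\widehat{\L}'\cong n^2\widehat{\L}'$. Taking $n=0$ gives $e^*\widehat{\L}'$ trivial, so $\widehat{\L}'$ is rigidified. Its restriction to $A$ is then an adelic extension of $L$ with the same homogeneity, so by the uniqueness in \cite[Theorem 6.1.3]{YZ21} it agrees with $\widehat{L}$, and likewise for $\widehat{\L}|_A$. Lemma~\ref{injection of rigidified adelic line bundle} then gives $\widehat{\L}'\cong\widehat{\L}$.

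The main obstacle is the rigidification bookkeeping in this last step. The two rigidifications need not match, but differences of rigidifications live in $\widehat{\Pic}(\Sbar/\C)$ pulled back along the structure map. Such a pullback $\pi^*\widehat{M}$ satisfies $[n]^*\pi^*\widehat{M}=\pi^*\widehat{M}$, so homogeneity with $n=2$ forces $3\pi^*\widehat{M}=0$. To kill $\pi^*\widehat{M}$ itself, apply $e^*$, which recovers $\widehat{M}$, and use $n=0$ as above. Once this is handled, the argument matches Proposition~\ref{uniqueness of adelic extension, geometric case}.
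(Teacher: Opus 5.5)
Your proposal is correct and follows essentially the same route as the paper. Like the paper, you write $\L=(\L+m\mathcal{H})-m\mathcal{H}$ with $\mathcal{H}$ symmetric and relatively ample, take the nef extensions from the relatively ample case, and form their difference; for uniqueness you restrict to $A$, invoke \cite[Theorem 6.1.3]{YZ21}, and use the injectivity of $j^*$. Your extra rigidification bookkeeping is harmless but not needed, since $j^*$ is already injective on all of $\widehat{\Pic}(\NN(A)^0/\C)$ (and in any case homogeneity at $n=0$ kills a term $\pi^*\widehat{M}$ directly, because $[0]^*\pi^*\widehat{M}=\pi^*\widehat{M}$).
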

\begin{proof}
	The uniqueness follows from Proposition \ref{uniqueness of adelic extension, geometric case}, once we have constructed one adelic extension. Observe that $\L\cong \L^+-\L^-$ can be decomposed into a difference of two relatively ample  and \emph{symmetric} line bundles $\L^{\pm}$ on $\NN(A)^0$. In fact, take a relatively ample line bundle~$H_0$ (which exists because $\NN(A)^0/\Sbar$ is quasi-projective) and consider  $H\coloneq H_0+[-1]^*H_0$,  which is  relatively ample and symmetric. There is a positive integer~$n$ such that $\L+nH$ is relatively ample (Lemma \ref{GW, 13.62}). Set  $\L^+=\L+nH,\, \L^-=nH$. By Lemma \ref{nef adelic extension, geometric case}, $\L^{+}$ and $\L^-$ admit nef adelic extensions. Therefore, the difference of these adelic extensions is an integrable adelic extension of $\L$.
\end{proof}
Since the (extended) Poincaré bundle is symmetric (Remark \ref{symmetry of Poincare bundle}), we deduce
\begin{prop}[adelic prolongation of extended Poincaré bundle]\label{adelic prolongation of extended Poincaré bundle, geometric case}
	The extended rigidified Poincaré bundle 	 $\overline{\PP}$  on $\NN(A)\times_{{\Sbar}} \NN(A^\vee)$ admits  an integrable adelic extension~$\widehat{\overline{\PP}}$, which is unique  with the dynamical property that
	\[
	[n]^*\widehat{\overline{\PP}}\cong n^2\widehat{\overline{\PP}}
	\] for all $n\in \Z$.
\end{prop}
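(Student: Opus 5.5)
The plan is to apply Lemma~\ref{integrabl adelic extension, symmetric, geometric} with the abelian scheme $A\times_S A^\vee$ over $S$ in place of $A/S$. First, the identity component of the Néron model of $A\times_S A^\vee$ over $\Sbar$ is $\NN(A)^0\times_{\Sbar}\NN(A^\vee)^0$. Indeed, Néron models commute with products, since the Néron mapping property is compatible with fiber products. Identity components also commute with products: over a field, the fibers of $\NN(A)^0$ and $\NN(A^\vee)^0$ are connected smooth group schemes, so their product is connected, and it is open in the product of the Néron models. Hence $\U\coloneq\NN(A)^0\times_{\Sbar}\NN(A^\vee)^0$ is regular and quasi-projective over $\Sbar$, and it is the $\U$ attached to the abelian scheme $A\times_S A^\vee$. (I read the $\NN(A)\times_{\Sbar}\NN(A^\vee)$ in the statement as these identity components, as in Proposition~\ref{prolongation of  Poincaré  bundle}.)

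Second, I check the hypotheses on the line bundle. By Proposition~\ref{prolongation of  Poincaré  bundle}, $\overline{\PP}$ is the unique rigidified extension of $\PP$ to $\U$. By Remark~\ref{symmetry of Poincare bundle}, it is symmetric for the total inversion $[-1]$, which is exactly the inversion on the group scheme $\U$. Lemma~\ref{integrabl adelic extension, symmetric, geometric} only asks for a rigidified symmetric line bundle, not an ample one. Its proof writes $\L=\L^+-\L^-$ using a symmetric relatively ample $H$ on $\U$, which exists because $\U/\Sbar$ is quasi-projective (a product of quasi-projective schemes). That lemma then yields an integrable analytic adelic extension $\widehat{\overline{\PP}}$.

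Third, I establish uniqueness and the dynamical property. The construction behind Lemma~\ref{nef adelic extension, geometric case} and Proposition~\ref{adelic extension, geometric case} gives $[2]^*\widehat{\L}^{\pm}\cong 4\widehat{\L}^{\pm}$, hence $[2]^*\widehat{\overline{\PP}}\cong 4\widehat{\overline{\PP}}$. Proposition~\ref{uniqueness of adelic extension, geometric case} then upgrades this to $[n]^*\widehat{\overline{\PP}}\cong n^2\widehat{\overline{\PP}}$ for all $n$, and it also gives uniqueness among rigidified adelic extensions. Its argument transfers verbatim to $A\times_S A^\vee$, using two inputs. The first is the injectivity of restriction $\widehat{\Pic}(\U/\C)^e\to\widehat{\Pic}(A\times_S A^\vee/\C)^{e}$ from Lemma~\ref{injection of rigidified adelic line bundle}, which applies because $\U$ and $A\times_S A^\vee$ are regular with the same function field. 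The second is Yuan--Zhang's uniqueness of the dynamical extension of $\PP$ on the abelian scheme $A\times_S A^\vee$ \cite[Theorem 6.1.3]{YZ21}.

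The main obstacle I expect is in this last step: making sure the $[2]$-equivariance survives the decomposition $\L^+-\L^-$. Since $[2]^*\widehat{\L}^\pm\cong4\widehat{\L}^\pm$ holds separately for each piece, the difference inherits it, and the rigidifications are compatible because all constructions are rigidified at $e$. I would therefore check this compatibility explicitly rather than just cite it.
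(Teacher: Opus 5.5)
Your proposal is correct and follows the paper's argument: the paper deduces the proposition in one line from Lemma~\ref{integrabl adelic extension, symmetric, geometric}, applied to the abelian scheme $A\times_S A^\vee$ (whose Néron identity component is $\NN(A)^0\times_{\Sbar}\NN(A^\vee)^0$), using only the symmetry of $\overline{\PP}$ from Remark~\ref{symmetry of Poincare bundle}. Your additional checks are the details the paper leaves implicit: compatibility of Néron models and identity components with products, quasi-projectivity, the $[2]$-equivariance of the difference $\widehat{\L}^+-\widehat{\L}^-$, and the transfer of the uniqueness argument.
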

\begin{lem}\label{integrabl adelic extension, antisymmetric, geometric}
	Suppose $\L$ is antisymmetric. There is an integrable analytic adelic extension $\widehat{\L}$, which is unique with the property that $[n]^*\widehat{\L}\cong n\widehat{\L}$ for all $n\in \Z$.
\end{lem}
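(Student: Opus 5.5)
Existence is reduced to the symmetric case via the $[2]$-dynamics, and uniqueness is handled exactly as in Proposition~\ref{uniqueness of adelic extension, geometric case}. Since $\L$ is antisymmetric, $[-1]^*\L\cong -\L$ as rigidified line bundles. The theorem of the cube (over $A$, then extended to $\NN(A)^0$ by Lemma~\ref{equivalence of cat. of rigidified line bundle}) gives $[n]^*\L\cong \frac{n^2+n}{2}\L+\frac{n^2-n}{2}[-1]^*\L\cong n\L$. In particular $[2]^*\L\cong 2\L$ canonically as rigidified bundles.

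For \textbf{existence} I would run the construction of \ref{Sequence of projective models, geometric case}--\ref{metrized model line bundle} with the factor $\frac14$ replaced by $\frac12$. Write $\L=\L^+-\L^-$ with $\L^\pm$ relatively ample (e.g.\ $\L^-=nH$, $\L^+=\L+nH$ as in Lemma~\ref{integrabl adelic extension, symmetric, geometric}; symmetry of $\L^{\pm}$ is not needed for this step). Choose models $\X$ and $\overline{\LL}$ via Lemma~\ref{extension of line bundle, positivity, anayltic} applied to $\L^{\pm}$, and set $\LL_i=\frac12 f_i^*\LL_{i-1}$ with the pulled-back metrics. The proof of Lemma~\ref{lemma for Cauchy sequence, geometric} then goes through verbatim and yields $\divisor(\ell_{i+1}\ell_i^{-1})=2^{-i}g_{i+1}^*\divisor(\ell_1)$, since only the identity $f^*(\LL|_\U)\cong 2\LL|_\U$ was used. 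The Green's function estimate in Proposition~\ref{adelic extension, geometric case} then gives the Cauchy bound with $r'/2^i$, which still tends to $0$. This produces an adelic extension $\widehat{\L}$ with $[2]^*\widehat{\L}\cong 2\widehat{\L}$.

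\textbf{Integrability} is the main obstacle, because the pullback by $f_i$ of a nef bundle is nef, but the limit involves $\L^+$ and $\L^-$ with different scalings. I would avoid mixing them as follows. Consider $\widehat{\L}^{\pm}$, the nef adelic bundles obtained from the $[2]$-iteration with factor $\frac14$ (Lemma~\ref{nef adelic extension, geometric case} uses only relative ampleness, not symmetry, in its proof). Along the dynamical system these are not $[2]$-equivariant with weight $2$. A cleaner route is to express the weight-$2$ limit as a telescoping series in nef pieces: write $\widehat{\L}=\lim_i 2^{-i}[2]^{i*}\widehat{M}$ for a model $\widehat{M}$, and decompose $2^{-i}[2]^{i*}=\frac{1}{2^i}\bigl(\text{nef}\bigr)-\frac{1}{2^i}\bigl(\text{nef}\bigr)$. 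If that fails to give uniform control, I would fall back on the trick of \cite[Theorem 6.1.2]{YZ21}. Note that $\L+[-1]^*\L^{+}$-type combinations are symmetric, and
\[
2\L\cong (\L^+ - [-1]^*\L^+) - (\L^- - [-1]^*\L^-).
\]
Moreover, for any relatively ample $M$ one has $M-[-1]^*M=(M+[-1]^*M+\ldots)$, which can be rewritten as a difference of symmetric relatively ample bundles $\bigl(M+ 2[-1]^*M\bigr)$-style expressions, once pulled back along the addition/difference maps $\NN(A)^0\times_{\Sbar}\NN(A)^0\to\NN(A)^0$. Concretely, I would use the identity $m^*M - \pr_1^*M-\pr_2^*M$ restricted along $(\id,[-1])$, etc., reducing to pullbacks of symmetric relatively ample bundles, each of which has an integrable extension by Lemma~\ref{integrabl adelic extension, symmetric, geometric}. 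Pullbacks of integrable adelic bundles along morphisms are integrable, so this gives an integrable $\widehat{\L}'$ extending $\L$.

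Finally, the uniqueness argument identifies every such extension. By Lemma~\ref{injection of rigidified adelic line bundle}, a rigidified extension is determined by its restriction to $A$, and that restriction is pinned down by the dynamical property via \cite[Theorem 6.1.3]{YZ21} with $\epsilon=1$. Hence the integrable $\widehat{\L}'$ coincides with the dynamically constructed $\widehat{\L}$, which is therefore integrable. The property $[n]^*\widehat{\L}\cong n\widehat{\L}$ for all $n$ follows as in Proposition~\ref{uniqueness of adelic extension, geometric case}: apply uniqueness to $\frac1n[n]^*\widehat{\L}$ for $n\neq 0$, and use the rigidification for $n=0$.
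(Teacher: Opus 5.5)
Your existence and uniqueness steps are fine. Since $[2]^*\L\cong 2\L$, running the model construction with $\frac12$ in place of $\frac14$ gives Cauchy bounds $r'/2^i$. Uniqueness reduces to \cite[Theorem 6.1.3]{YZ21} via Lemma~\ref{injection of rigidified adelic line bundle}.

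The gap is integrability, which is the real content of the lemma; none of your routes establishes it.
\begin{itemize}
\item The telescoping idea fails, as you suspected. With $H$ symmetric, $2^{-i}[2]^{i*}(\L+nH)\cong\L+2^{i}nH$, so both nef pieces grow like $2^i$ and there is no uniform control.
\item Your claim that Lemma~\ref{nef adelic extension, geometric case} does not use symmetry is wrong. The construction needs $[2]^*\L^{\pm}\cong4\L^{\pm}$ on $\U$. For $\L^+=\L+nH$ one gets $\frac14[2]^*\L^+\cong\frac12\L+nH$, so the maps $\ell_i$ do not exist.
\item The identity $2\L\cong(\L^+-[-1]^*\L^+)-(\L^--[-1]^*\L^-)$ is correct, but it only rewrites $2\L$ as a difference of antisymmetric bundles, which is the same problem.
\item The concrete fallback is incorrect. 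Since $m\circ(\id,[-1])$ factors through the identity section, $(\id,[-1])^*(m^*M-\pr_1^*M-\pr_2^*M)\cong-(M+[-1]^*M)$, which is symmetric. More generally, pulling back a bundle symmetric for the total $[-1]$ along a map commuting with $[-1]$ yields only symmetric bundles, and the pulled-back canonical adelic bundles have weight $4$ under $[2]$, not $2$.
\item Your final identification assumes $[2]^*\widehat{\L}'\cong2\widehat{\L}'$ without checking it. Uniqueness holds only among dynamical extensions; an arbitrary integrable extension of $\L$, such as a model one, is not $\widehat{\L}$.
\end{itemize}

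The missing idea, which is the paper's proof, is to use the biextension structure of the Poincaré bundle rather than total symmetry.
\begin{enumerate}
\item A rigidified antisymmetric $L$ lies in $\Pic^0$, so $L\cong(1,\sigma\circ\pi)^*\PP$ for some $\sigma\in A^\vee(S)$.
\item A multiple $n\sigma$ extends to $\bar\sigma\in\NN(A^\vee)^0(\Sbar)$, and by Lemma~\ref{equivalence of cat. of rigidified line bundle}, $n\L\cong(1,\bar\sigma\circ\bar\pi)^*\overline{\PP}$.
\item The extension $\widehat{\overline{\PP}}$ from Lemma~\ref{integrabl adelic extension, symmetric, geometric} is integrable because $\overline{\PP}$ is symmetric for the total $[-1]$. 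Yet it has weight $1$ in each variable separately. For $[2]'=([2],1)$, both $[2]'^*\widehat{\overline{\PP}}$ and $2\widehat{\overline{\PP}}$ extend $2\overline{\PP}$ and have weight $4$ under the total $[2]$, which commutes with $[2]'$. Hence they agree by uniqueness.
\item Since $[2]'\circ(1,\bar\sigma\circ\bar\pi)=(1,\bar\sigma\circ\bar\pi)\circ[2]$, the pullback $(1,\bar\sigma\circ\bar\pi)^*\widehat{\overline{\PP}}$ is an integrable extension of $n\L$ with weight $2$ under $[2]$.
\item By uniqueness this pullback equals $n\widehat{\L}$. Integrability of $\widehat{\L}$ then follows because pullback preserves integrability.
\end{enumerate}
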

\begin{proof}
	Let $\PP$ be the Poincaré bundle on the abelian scheme $A\times_S A^\vee$.  Recall  $\L|_A\cong L$ as rigidified line bundles, and we can view $L\in A^\vee(S)$; denote by $\sigma\colon S\to A^\vee$ the corresponding section so that
	\begin{equation}\label{section of rigidified line bundle}
		L\cong (1,\sigma\circ \pi)^*\PP
	\end{equation}
	as rigidified line bundles, where $\pi\colon A\to S$ is the structure morphism. Let $\overline{\PP}$ be the extended Poincaré bundle on $\NN(A)^0\times_{{\Sbar}} \NN(A^\vee)^0$, rigidified along the identity. By Lemma \ref{equivalence of cat. of rigidified line bundle}, for some (large) integer $n>0$, $n\sigma$ extends to a section $\bar{\sigma}\colon \Sbar\to \NN(A)^0$ corresponding to the rigidified line bundle $\L$, and the isomorphism \eqref{section of rigidified line bundle} extends to an isomorphism
	\begin{equation}
		\L\cong (1,\bar{\sigma}\circ \bar{\pi})^*\overline{\PP},
	\end{equation} where $\bar{\pi}\colon \NN(A)^0\to \Sbar$ denotes the structure morphism.
	
	For $m\in \Z$ let $[m]$ be the total multiplication on $\NN(A)\times_{{\Sbar}} \NN(A)^0$ (which is an $\Sbar$-morphism), and $[m]'=([m]_{\NN(A)^0},1)$ be the partial multiplication on the first component (which is an $\NN(A^\vee)^0$-morphism). Let $\widehat{\overline{\PP}}$ be the integrable adelic extension  on $\NN(A)^0\times_{{\Sbar}} \NN(A^\vee)^0$ obtained in Proposition \ref{adelic prolongation of extended Poincaré bundle, geometric case}. It follows that
	\[
	[2]'^*\widehat{\overline{\PP}}\cong 2\widehat{\overline{\PP}}.
	\] Consider 
	\[
	\widehat{\L}'\coloneq (1,\bar{\sigma}\circ \bar{\pi})^*\widehat{\overline{\PP}}, 
	\] which is an analytic adelic extension of $\L$ satisfying
	\[
	[2]^*\widehat{\L}'\cong 2\widehat{\L}'.
	\]  By the uniqueness statement, Proposition \ref{uniqueness of adelic extension, geometric case}, we must have
	\[
	\widehat{\L}'\cong \widehat{\L}.
	\] Since $\widehat{\overline{\PP}}$ is integrable, and the integrability is preserved under pullback (see \cite[\S 2.5.5]{YZ21}), we conclude that $\widehat{\L}$ is integrable. The uniqueness of $\widehat{\L}$ follows from the uniqueness of~$\widehat{\overline{\PP}}$.
\end{proof}
For convenience, we summarize our results in this section as follows.
\begin{thm}\label{adelic extension summary, analytic case}
	Let $A/S$ be an abelian scheme over a smooth $\C$-curve $S$. Let $S\subset \Sbar$ be a smooth partial compactification and let $\NN(A)^0/\Sbar$ be the identity component of the Néron model of $A/S$ over $\Sbar$. Let $L$ be  either a symmetric or an antisymmetric rigidified line bundle on $A$ with the unique rigidified extension $\L$ on $\NN(A)^0$. Suppose $L$ is endowed with an admissible metric, normalized along the identity section. Then  $\L$ admits an integrable adelic extension $\widehat{\L}$, which is unique with the dynamical property that
	\[
	[n]^*\widehat{\L}\cong n^\epsilon\widehat{\L}\quad\text{for all }n\in \Z,
	\] and which extends the isomorphism $[n]^*\widehat{L}\cong n^\epsilon \widehat{L}$ from \cite[Theorem 6.1.3]{YZ21} for all $n\in \Z$; here $\epsilon=2$ if $L$ is symmetric, and $\epsilon=1$ if $L$ is antisymmetric. Moreover, if $L$ is relatively ample, then  $\widehat{\L}$ is nef.
\end{thm}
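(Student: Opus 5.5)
The plan is to assemble Theorem \ref{adelic extension summary, analytic case} from the results of this section, splitting into the symmetric and antisymmetric cases and then checking compatibility with Yuan--Zhang's adelic line bundle $\widehat{L}$ on $A$.

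In the symmetric case, Lemma \ref{integrabl adelic extension, symmetric, geometric} gives an integrable adelic extension $\widehat{\L}$ of $\L$ with $[n]^*\widehat{\L}\cong n^2\widehat{\L}$ for all $n\in\Z$, and this extension is unique with that property. In the antisymmetric case, Lemma \ref{integrabl adelic extension, antisymmetric, geometric} gives the analogous statement with $n$ in place of $n^2$; it is proved by pulling back the adelic Poincaré bundle of Proposition \ref{adelic prolongation of extended Poincaré bundle, geometric case} along $(1,\bar\sigma\circ\bar\pi)$. The nefness claim for relatively ample $L$ is Lemma \ref{nef adelic extension, geometric case}. Relative ampleness of $\L$ over $\Sbar$ follows from that of $L$ by Lemma \ref{ample extension}, since a rigidified symmetric ample $L$ extends to a relatively ample $\L$.

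The remaining point is compatibility. Restrict $\widehat{\L}$ along $j\colon A\hookrightarrow\NN(A)^0$. Pullback commutes with $[n]$, because $[n]\circ j=j\circ[n]$, so $j^*\widehat{\L}$ is an integrable adelic extension of $L$ satisfying $[n]^*j^*\widehat{\L}\cong n^\epsilon j^*\widehat{\L}$. By the uniqueness in \cite[Theorem 6.1.3]{YZ21}, $j^*\widehat{\L}\cong\widehat{L}$. The isomorphism $[n]^*\widehat{\L}\cong n^\epsilon\widehat{\L}$ then restricts to an isomorphism $[n]^*\widehat{L}\cong n^\epsilon\widehat{L}$.

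To see that this restricted isomorphism is the one from loc.\ cit., I would use rigidifications. Both isomorphisms are isomorphisms of rigidified adelic line bundles, and such an isomorphism is determined by its effect on the rigidification, since automorphisms of a rigidified object are trivial. Lemma \ref{injection of rigidified adelic line bundle} lets us transport isomorphisms and uniqueness between $\NN(A)^0$ and $A$. Finally, the admissible metric on $L$ is recovered from $\widehat{L}$ as in Remark \ref{continuous extension of metrized line bundle}, which explains the hypothesis that the metric is normalized along the identity section.

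The main obstacle I expect is making precise that isomorphisms, and not only isomorphism classes, extend uniquely. This requires knowing that a rigidified adelic line bundle on $\NN(A)^0$ has only trivial automorphisms. My first approach would be to reduce to the fact that an automorphism is a global invertible function on $\NN(A)^0$, hence a pullback from $\Sbar$ since the fibers are connected, so it is fixed by the rigidification. The analytic part, a continuous function of constant modulus, would be handled in the same way fibrewise.
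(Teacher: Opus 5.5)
Your proposal is correct and follows the paper's route. The theorem is stated as a summary of Lemmas \ref{integrabl adelic extension, symmetric, geometric}, \ref{integrabl adelic extension, antisymmetric, geometric}, \ref{nef adelic extension, geometric case} and Proposition \ref{uniqueness of adelic extension, geometric case}, and compatibility on $A$ is obtained as you do, by restricting along $j$ and invoking the uniqueness in \cite[Theorem 6.1.3]{YZ21}; your rigidification argument for matching the isomorphisms themselves (not just classes) is more careful than the paper, which leaves this implicit, but argue the triviality of units on $A$ (proper fibers) or via density of $A$ in $\NN(A)^0$, since connectedness of the non-proper boundary fibers alone does not force units to come from $\Sbar$.
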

\begin{rem}
	By a similar, and in fact simpler, argument (``forgetting the metrics''), we
	obtain analogues of Theorem  \ref{adelic extension summary, analytic case} over an arbitrary base field (with the trivial  valuation).
\end{rem}
\begin{rem}[higher dimensional base]
	The  construction  of the adelic line bundle $\widehat{\L}$ in Theorem \ref{adelic extension summary, analytic case} only requires the following properties of $\NN(A)^0$ (but not its universal property):
	\begin{enumerate}
		\item $\NN(A)^0/\Sbar$ is quasi-projective;
		\item $L$ extends uniquely to $\L$ over $\NN(A)^0$ as a rigidified line bundle.
	\end{enumerate}
	This suggests that a similar construction should also work over a higher-dimensional base. Let $S$ be a smooth quasi-projective $k$-variety, where $k$ is either a field or $k=\C$ equipped with the standard archimedean valuation. Suppose there exists a smooth commutative group scheme $\mathcal{G}/\Sbar$ extending an abelian scheme $A/S$, where $S\subset \Sbar$   is a partial  compactification and $\Sbar$ is a normal $k$-variety. Assume moreover that $\mathcal{G}/\Sbar$ satisfies the properties listed in the above display. For instance, such $\mathcal{G}/\Sbar$ exists when $A/S$ has semistable reduction  over $\Sbar$ with connected fibers and $L$ is symmetric line bundle on $A$, rigidified along the identity section \emph{cf}.\ \cite[\S II.3]{Ast129}. In general, a projective model  of  $\mathcal{G}$ over $k$, constructed using a similar idea as in Lemma \ref{extension of line bundle, positivity}, need not be flat over  $\Sbar$. On the other hand, flatness does not seem to be essential for our constructions leading to Theorem \ref{adelic extension summary, analytic case}.
\end{rem}

\section{Arithmetic case}\label{Arithmetic Case}
Now let $S$ be a smooth curve over a number field~$K$. Denote by $O_K$  the ring of algebraic integers of~$K$.
To prove the analogous adelic extension result to Theorem \ref{adelic extension summary, analytic case}, we  need to find suitable (quasi-)projective models for $\NN(A)^0$. We explain the setup through the following technical lemma, which follows from spreading out arguments, see e.g.\ \cite[Theorem 3.2.1]{Qpoint}. See  Table 1 in \cite[Appendix C.1]{Qpoint} for  properties of morphisms that can be spread out.
\begin{lem}\label{Lemma on finding models}
	Let  $p\colon X\to S$ be a flat finite type morphism from an integral scheme $X$. Let~$L$ be a $p$-ample line bundle on $X$. Let $f\colon X\to X$ be a quasi-finite $S$-morphism. There exist integral quasi-projective $O_K$-schemes $\U$ and $\V$,  integral projective $O_K$-schemes $\X$ and $\SS$, and  a commutative diagram 
	\[
	\begin{tikzcd}
		X\ar[hook]{r} \ar{d}{p}& \mathcal{U} \ar[hook]{r} \ar{d} & \X \ar{d}{\pi}\\
		S \ar[hook]{r} & \mathcal{V} \ar[hook]{r} & \SS
	\end{tikzcd}
	\] with the following properties. 
	\begin{enumerate}
		\item $\V$ is a quasi-projective model of $S$ (Definition \ref{quasi-projective model, arithmetic}) and  $X\hookrightarrow \U$ is a monomorphism.
		\item $\U\hookrightarrow \X$ and $\V\hookrightarrow \SS$ are dense open immersions.
		\item The morphism $\pi$ is projective and flat.
		\item The restriction $\pi|_\U\colon \U\to \V$ is a flat quasi-projective  morphism. 
		\item  The line bundle $L$ extends to a $\pi$-ample $\Q$-line bundle $\LL$ on $\X$.
		\item 	 The $S$-morphism $f$ extends to a quasi-finite $\mathcal{V}$-morphism $f_\V\colon \U\to \U$.
	\end{enumerate}
\end{lem}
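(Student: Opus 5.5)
The plan is a spreading-out argument in three steps, followed by compactification and a projectivity argument patterned on Lemma \ref{extension of line bundle, positivity}.

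\textbf{Step 1: spread out over an open of $\Spec O_K$.} Everything in sight, namely $S$, $X$, $p$, $L$ and $f$, is of finite presentation over $K$. Since $K=\colim O_K[1/N]$, standard limit arguments (\emph{cf}.\ \cite[Theorem 3.2.1]{Qpoint} and Table 1 of \cite[Appendix C.1]{Qpoint}) give, after inverting a suitable $N$, finite type models over $V_0=\Spec O_K[1/N]$:
\begin{itemize}
\item an integral model $\V'$ of $S$, which we may take quasi-projective, since $S$ is quasi-projective over $K$;
\item a flat finite type morphism $\U'\to\V'$ extending $p$;
\item a line bundle $\L'$ on $\U'$ extending $L$, which is relatively ample because relative ampleness spreads out;
\item a $\V'$-morphism $f'\colon\U'\to\U'$ extending $f$, which is quasi-finite because quasi-finiteness spreads out.
\end{itemize}
Integrality of $\U'$ can be arranged by replacing $\U'$ by the closure of $X$ and shrinking $N$. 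Because $\L'$ is $\V'$-ample over the quasi-projective $\V'$, Lemma \ref{GW, 13.65} shows that $\U'$ is quasi-projective over $O_K[1/N]$, hence over $O_K$. The resulting morphism $X\to\U'$ identifies $X$ with the generic fibre (or an open of it), so it is a monomorphism. This takes care of items (1), (4) and (6).

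\textbf{Step 2: compactify the base.} Choose a projective closure $\SS$ of $\V:=\V'$ over $O_K$, for instance the scheme-theoretic closure in some $\P^n_{O_K}$.

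\textbf{Step 3: compactify the total space, following Lemma \ref{extension of line bundle, positivity}.} Choose an ample $\widetilde M$ on $\SS$ so that a multiple of $\L'+\pi^*\widetilde M|_\V$ is very ample relative to $O_K$ on $\U:=\U'$. This embeds $\U\hookrightarrow\P^m_{O_K}$ as a locally closed subscheme. Take $\X_0$ to be the closure of $\U$ there. This gives a rational map $\X_0\dashrightarrow\SS$, which we resolve as in \ref{Resolve indeterminancy locus}: take the closure $\X$ of the graph of $\U\to\V$ inside $\X_0\times_{O_K}\SS$. By Lemma \ref{identified open}, $\U$ remains an open subscheme of $\X$, and $\pi\colon\X\to\SS$ is projective.

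\textbf{Main obstacle: flatness and relative ampleness of $\pi$ on the boundary.} Over the base $\SS$, which is no longer Dedekind, flatness of $\pi$ is not automatic; Remark \ref{flatness over Dedekind scheme} only covers a one-dimensional base.
\begin{itemize}
\item \emph{Flatness.} I would first try Raynaud--Gruson flattening by blow-up: blow up $\SS$ along a centre disjoint from $\V$ and replace $\X$ by its strict transform. This makes $\pi$ flat while keeping $\U$ and $\V$ intact, since $\pi|_\U$ is already flat. The blown-up base stays projective, and the strict transform stays projective over it.
\item \emph{Relative ampleness.} Set $\LL:=\O(1)|_\X\otimes(\text{root}) - \pi^*\widetilde M$, a $\Q$-line bundle extending $L$, as in the proof of Lemma \ref{extension of line bundle, positivity}. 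It is $\pi$-ample because it is absolutely ample up to a pullback from the base. After the flattening blow-ups we replace it by its pullback twisted by a small multiple of minus the exceptional divisor. Exceptional-divisor twists are the standard way to retain relative ampleness on the strict transform of a blow-up, and the exceptional divisor is supported off $\U$, so $\LL|_X\cong L$ is preserved. This gives item (5).
\end{itemize}

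Items (2) and (3) then hold by construction. The only genuinely delicate point is this compatibility between flattening and relative ampleness.
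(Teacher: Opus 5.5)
Your proposal has a genuine gap in the flattening step. Raynaud--Gruson gives a blow-up of $\SS$ with centre disjoint from $\V$ (a $\V$-admissible flattening) only when $\pi$ is already flat over $\V$, i.e.\ at every point of $\pi^{-1}(\V)$. Flatness of $\pi|_\U$ does not give this.

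In your construction $\U\to\V$ is only quasi-projective: it is the spread-out of $p$, and in the intended application $\NN(A)^0\to\Sbar$ is not proper. So $\pi^{-1}(\V)$ will in general contain points of the boundary $\X\ohne\U$, and there $\pi$ can fail to be flat. For instance, a boundary component contracted to a closed point of $\V$ makes the fibre dimension jump. In that case the flattening centre must meet $\V$, $\V$ is no longer an open subscheme of the blown-up base, and item (2) is lost. So the justification ``since $\pi|_\U$ is already flat'' is the missing point.

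The paper avoids this by ordering the construction differently:
\begin{enumerate}
\item It first replaces $\U$ by the scheme-theoretic closure of $X$ in $\P^n_\V$, so that $\U\to\V$ is projective.
\item It makes $\U\to\V$ flat, by flattening or by shrinking $\V$.
\item Only then does it compactify, using a closed $\V$-embedding $\U\hookrightarrow\P^{n'}_\V\subset\P^{n'}_\SS$. Now $\pi^{-1}(\V)=\U$ is flat over $\V$, so a $\V$-admissible flattening exists.
\item Since $f$ does not extend to this projective closure, $f$ is spread out at the very end, shrinking $\U$ and $\V$. Items (1)--(5) survive such shrinking.
\end{enumerate}

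Within your ordering you could instead shrink first:
\begin{itemize}
\item Let $W\subset\SS$ be the locus over which $\pi$ is flat. It is open because $\pi$ is proper.
\item $W$ contains $S$. For $s\in S$ one has $\O_{\SS,s}=\O_{S,s}$, a field or a DVR. Since $\X$ is integral and dominates $\SS$, each $\O_{\X,x}$ over $s$ is torsion-free, hence flat, over it.
\item Replace $\V$ by $\V\cap W$ and $\U$ by $\U\cap\pi^{-1}(W)$. The latter is still stable under $f'$, because $f'$ is a $\V$-morphism.
\item Then flatten $W$-admissibly.
\end{itemize}

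Conversely, the point you flag as delicate is not. The strict transform $\X'$ is a closed subscheme of $\X\times_\SS\SS'$, so the pullback of the $\pi$-ample $\LL$ is automatically relatively ample; this is what the paper uses. Moreover, the exceptional divisor on $\X'$ is pulled back from $\SS'$, so twisting by it has no effect on relative ampleness.
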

\begin{proof}
	Recall that $S$ is quasi-projective over~$K$ by \cite[Theorem 15.18]{GW}. After spreading out the morphism $S\to \Spec K$, we obtain a dense open subscheme $V\subset \Spec O_K$ and a quasi-projective   $V$-scheme $\V$ along with an open immersion $S\hookrightarrow \V_K\coloneq \V\times_V \Spec K$. Indeed, at this point we can take $V = \Spec O_K$, but  we may want to shrink $V$ and $\V$ in the later in the argument. By the relative ampleness of $L$, there is a positive multiple $mL$ that  induces a locally closed $S$-immersion $X\hookrightarrow \P^n_S$ for some $n\geq 0$ (Lemma \ref{GW, 13.62}). The induced monomorphism $S\hookrightarrow \V$ gives rise to a monomorphism 
	\[
	X\hookrightarrow \P^n_S\hookrightarrow \P^n_\V.
	\] Let $\U$ be the scheme-theoretic closure of $X$ in $\P^n_\V$. Then we obtain a monomorphism $X\hookrightarrow \U$, and the induced morphism $\U\to \V$ is quasi-projective (even projective in this case). Moreover, restriction of the tautological line bundle $\O(1)$ on $\P^n_\V$ along the closed $\V$-immersion $\U\hookrightarrow \P^n_\V$ yields an line bundle $\LL_\U$ which is relatively ample and extends $mL$.

	By  Raynaud--Gruson flattening (see e.g.\ \cite[Theorem 14.143]{GW}), there exist blow-ups $\U'\to \U$ resp.\ $\V'\to \V$ which are isomorphisms over $X$ resp.\ over $S$ such that $\U'\to\V'$ is flat. Alternatively, since $S$ is Dedekind and flatness can be spread out, by shrinking $\V$, we also obtain the flatness. By \cite[Proposition 13.64]{GW}, the pullback of $\LL_U$ along $\U'\to \U$ is relatively ample w.r.t.\ $\U'/\V'$.  Relabelling, we obtain a projective flat morphism $\U\to \V$ where $\U, \V$ satisfy the properties in Item 1 of the lemma with the additional property that there exists a $\V$-ample line bundle $\LL_U$ on $\U$ extending $mL$.
	
	Similarly, we use a positive multiple $m'\LL_U$ to realize a closed $\V$-immersion $\U\hookrightarrow \P^{n'}_\V$ for some $n'\geq 0$. Let $\SS$ be a projective closure of $\V$ over $O_K$. Take $\X$ to be the scheme-theoretic closure along the composition of the immersions
	\[
	\U\hookrightarrow \P^{n'}_\V\hookrightarrow \P^{n'}_\SS.
	\] Then the induced morphism $\pi\colon \X\to \SS$ is projective. We can assume that $\pi$ is flat by blowing ups $\X'\to \X$ resp.\ $\SS'\to \SS$ which are isomorphisms over $\U$ resp.\ over $\V$ by  Raynaud--Gruson flattening, and then relabel them. As before, this procedure yields  a $\pi$-ample line bundle $\LL$ that extends $m'\LL_\U$.
	
	Until now we have settled Items 1--5 of the lemma. The last item follows by another spreading out argument. More precisely, we know that quasi-finiteness can be spread out,   thus~$f$ extends to a $\V_0$-morphism $\U_0\to \U_0$ for a  nonempty (possibly smaller) open subschemes $\V_0\subset \V$ and $\U_0\subset \U$, where $\V_0$ is defined over some open subset of $\Spec O_K$. The resulting morphism $\U_0\to \V_0$ is  flat and quasi-projective. Relabelling $\U_0$ as $\U$ and $\V_0$ as $\V$,  we obtain the final item.
\end{proof}
Let $A/S$ be an abelian scheme. Let $[n]\colon A\to A$ be the multiplication-by-$n$ map for some $n\in \N\ohne\{0,1\}$, which is a finite $S$-morphism. Let $S\subset\Sbar$ be a smooth partial compactification over $K$, and let $\NN(A)^0/\Sbar$ be the identity component of the Néron model of $A/S$.
\begin{lem}
	The $S$-morphism $[n]$ extends to a quasi-finite $\Sbar$ morphism $[n]\colon \NN(A)^0\to \NN(A)^0$.
\end{lem}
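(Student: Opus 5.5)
We obtain the extension from the group structure on $\NN(A)^0$ and then check quasi-finiteness fiber by fiber. For existence there are two routes. The first uses the Néron mapping property: since $\NN(A)$ is the Néron model of $A/S$ over $\Sbar$ and $\NN(A)$ is smooth over $\Sbar$, the composite $\NN(A)\to A\xrightarrow{}\cdots$ is not directly available. It is cleaner to note that $\NN(A)^0$ is a smooth commutative group scheme over $\Sbar$, so it carries its own multiplication-by-$n$ endomorphism $[n]\colon \NN(A)^0\to\NN(A)^0$, defined by iterating the group law. This is an $\Sbar$-morphism whose restriction over $S$ is the $[n]$ of $A/S$. Uniqueness of the extension follows because $\NN(A)^0$ is reduced and separated over $\Sbar$ and $A$ is dense in it.

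Quasi-finiteness means finite fibers, and this can be checked on fibers over points $s\in\Sbar$. Over $S$ the map is the finite morphism $[n]$ on the abelian variety $A_s$. Over a point $s\in\Sbar\ohne S$, the fiber $G\coloneq\NN(A)^0_s$ is a smooth connected commutative algebraic group over the residue field $\kappa(s)$, which has characteristic $0$ since we work over $K$ or $\C$. The key step is to show that $[n]\colon G\to G$ has finite kernel. In characteristic $0$ the differential of $[n]$ at the identity is multiplication by $n$ on $\mathrm{Lie}(G)$, which is an isomorphism. Hence $\ker[n]$ is an étale group scheme, so it is zero-dimensional and, being of finite type, finite.

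By homogeneity under translations, every fiber of $[n]\colon G\to G$ is either empty or a translate of $\ker[n]$, hence finite. So $[n]$ has finite fibers on every fiber over $\Sbar$. Since $[n]$ is an $\Sbar$-morphism, the fiber of $[n]$ over a point $y\in\NN(A)^0$ lying above $s$ coincides with the fiber of $[n]_s$ over $y$. Therefore $[n]$ is locally of finite type with finite fibers, i.e.\ quasi-finite.

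The only mildly delicate point is the kernel computation in the degenerate fibers. Using characteristic zero avoids any discussion of the structure of $G$, such as its torus or unipotent parts; in characteristic $p\mid n$ the argument would fail, for example when $G$ has a $\mathbb{G}_a$-part. No Néron-model input beyond the facts that $\NN(A)^0$ is a smooth, separated, connected-fibered group scheme of finite type extending $A$ is needed.
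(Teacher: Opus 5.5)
Your proof is correct, but it takes a different route from the paper's in both steps.

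\textbf{Existence and uniqueness.} The paper obtains the extension directly from the Néron mapping property. It applies this to the smooth $\Sbar$-scheme $\NN(A)^0$ and the $S$-morphism $[n]\colon A\to A$. The extension lands in $\NN(A)^0$ because the fibers are connected and the identity section is preserved, and uniqueness comes for free. So your remark that this route is ``not directly available'' is mistaken; it is exactly what the paper does. You instead use the group law of $\NN(A)^0$, which itself comes from the mapping property. You get uniqueness from reducedness, separatedness, and the density of $A$ in $\NN(A)^0$, which holds because $\NN(A)^0\to\Sbar$ is flat and hence open. This is equally valid.

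\textbf{Quasi-finiteness.} The paper simply cites \cite[\S 7.3, Lemma 2(2)]{BLR}. That lemma gives the stronger conclusion that $[n]$ is étale whenever $n$ is invertible on the base. You prove quasi-finiteness by hand via the fiberwise kernel computation, which is more self-contained. Your argument does not actually need characteristic $0$. The step $\mathrm{Lie}(\ker[n])=\ker(n\cdot)=0$, hence $\ker[n]$ is étale, works whenever $\mathrm{char}\,\kappa(s)\nmid n$.

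Combined with the fibral criterion of flatness, the same computation recovers the paper's étaleness. Source and target are smooth over $\Sbar$, and each $[n]_s$ is étale because its differential is bijective at every point, by translation.

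One small point: run the translation argument after base change to $\overline{\kappa(s)}$, so that fibers over non-rational points are covered. Finiteness of fibers may be checked there.
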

\begin{proof}
	By the Néron mapping property, the $S$-morphism $[n]$ on $A$ extends uniquely to an $\Sbar$-morphism on $\NN(A)^0$. It is quasi-finite, in fact étale, by \cite[\S 7.3, Lemma 2(2)]{BLR}, since $\mathrm{char}\, \kappa(s)\nmid n$ for all $s\in \Sbar$.
\end{proof}

\begin{lem}\label{quasi-projective model for arithmetic Neron model}
	Let $\L$ be a relatively ample line bundle on $\NN(A)^0/\Sbar$. Let $f=[2]$ be the quasi-finite $\Sbar$-morphism on $\NN(A)^0$. There exist integral quasi-projective $O_K$-schemes $\U$ and~$\V$, integral projective $O_K$-schemes $\X$ and $\SS$, and  a commutative diagram 
	\[
	\begin{tikzcd}
		\NN(A)^0\ar[hook]{r} \ar{d}{p}& \mathcal{U} \ar[hook]{r} \ar{d} & \X \ar{d}{\pi}\\
		\Sbar \ar[hook]{r} & \mathcal{V} \ar[hook]{r} & \SS
	\end{tikzcd}
	\] with the following properties. 
	\begin{enumerate}
		\item $\V$ is a quasi-projective model of $\Sbar$ and $\NN(A)^0\hookrightarrow \U$ is a monomorphism.
		\item $\U\hookrightarrow \X$ and $\V\hookrightarrow \SS$ are dense open immersions. 
		\item The morphism $\pi$ is projective and flat.
		\item The restriction $\pi|_\U\colon \U\to \V$ is a flat quasi-projective  morphism. 
		\item The line bundle $\L$ extends to a  $\pi$-ample $\Q$-line bundle $\LL$ on $\X$.
		\item  The $\Sbar$-morphism $f\colon \NN(A)^0\to \NN(A)^0$ extends to a quasi-finite $\mathcal{V}$-morphism $f_\V\colon \U\to \U$ of smooth quasi-projective $\V$-group scheme $\U$.
	\end{enumerate}
\end{lem}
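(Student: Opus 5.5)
This lemma is the special case of Lemma \ref{Lemma on finding models} with $X=\NN(A)^0$, base $\Sbar$ in place of $S$, $p$ the structure morphism, $L=\L$ and $f=[2]$. The plan is to check the hypotheses of that lemma and then upgrade item~(6) to the group-scheme statement.

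\textbf{Step 1: hypotheses of Lemma \ref{Lemma on finding models}.}
\begin{itemize}
\item $\NN(A)^0$ is integral, being smooth with connected fibres over the connected Dedekind scheme $\Sbar$ and having integral generic fibre $A_\eta$.
\item $p$ is of finite type, and it is flat by Remark \ref{flatness over Dedekind scheme} (or simply because it is smooth).
\item $\L$ is $p$-ample by hypothesis.
\item $[2]$ is a quasi-finite $\Sbar$-morphism by the preceding lemma; it is even étale, since the residue characteristics of $\Sbar$ are $0$.
\item $\Sbar$ is a smooth curve over $K$, hence quasi-projective over $K$, which is what the proof of Lemma \ref{Lemma on finding models} uses for the base.
\end{itemize}
Applying Lemma \ref{Lemma on finding models} then gives $\U,\V,\X,\SS$ and the diagram with properties (1)--(5), together with a quasi-finite extension $f_\V$ of $[2]$.

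\textbf{Step 2: the group-scheme structure in item (6).} This is the only new content. The plan is one further spreading-out argument, run inside the last step of the proof of Lemma \ref{Lemma on finding models}. The data to spread out are:
\begin{itemize}
\item the multiplication $m\colon \NN(A)^0\times_{\Sbar}\NN(A)^0\to\NN(A)^0$;
\item the inverse map;
\item the identity section $e\colon\Sbar\to\NN(A)^0$;
\item smoothness of $p$;
\item the group axioms, i.e.\ commutativity of finitely many diagrams.
\end{itemize}
All of these are finitely presented data over the generic fibre $\Sbar\to\Spec K$. By the standard spreading-out principle (\cite[Theorem 3.2.1]{Qpoint} and Table 1 of loc.\ cit.), they extend over a nonempty open $V_0\subset\Spec O_K$. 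More precisely, after shrinking $\V$ to an open $\V_0\supset\Sbar$ and $\U$ to the preimage $\U_0$, we obtain morphisms $m_\V$, $i_\V$, $e_\V$ over $\V_0$. These satisfy the group axioms (commutativity holds after shrinking, since it holds on the dense generic part and the schemes are separated and of finite presentation), and smoothness of $\U_0/\V_0$ also spreads out.

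Since $f=[2]=m\circ(\id,\id)$, after possibly shrinking further we may take $f_\V$ to be the doubling map $m_\V\circ(\id,\id)$ of the group scheme $\U_0$. By uniqueness of spread-out morphisms after shrinking, it agrees with the extension of $[2]$ from Step 1.

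\textbf{Step 3: restoring properties (1)--(5).} Shrinking $\V$ and $\U$ does not affect the compactifications $\SS$ and $\X$: the open subsets remain dense in them, $\pi$ remains projective and flat, and $\LL$ remains $\pi$-ample extending $\L$. Relabelling $\U_0,\V_0$ as $\U,\V$ therefore gives all six items.

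\textbf{Main obstacle.} The delicate point is that $\NN(A)^0\hookrightarrow\U$ is only a monomorphism and not an open immersion, so $\U\times_\V\U$ is not controlled by $\NN(A)^0\times_{\Sbar}\NN(A)^0$ directly. One must therefore spread out the group law as data over the generic fibre $\V_K\supset\Sbar$, and make sure that the shrinking used for the group structure is compatible with the open set already chosen in Lemma \ref{Lemma on finding models}. If this becomes awkward, I would instead reverse the order: first spread out $\NN(A)^0$ as a smooth commutative group scheme $\U/\V$, and only then run the compactification and Raynaud--Gruson flattening steps, which are isomorphisms over $\U$ and so do not disturb the group structure.
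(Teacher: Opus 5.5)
Your proposal is correct and follows the paper's proof. Items (1)--(5) and the quasi-finite extension $f_\V$ come from Lemma~\ref{Lemma on finding models} together with the preceding lemma on $[2]$, and the group-scheme structure comes from one further spreading-out argument, since smoothness, quasi-projectivity and the group law spread out after shrinking, exactly as the paper indicates. Your worry about the monomorphism $\NN(A)^0\hookrightarrow\U$ is settled by choosing the shrinking in the last step of Lemma~\ref{Lemma on finding models} so that the generic fibre of $\U$ over $\Spec K$ is exactly $\NN(A)^0$ (remove the closure of $\U_K\smallsetminus\NN(A)^0$); after that your Step~2 is standard.
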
 
\begin{proof}
	Items 1--6 are applications of the two lemmas preceding the current lemma, except for the claim about the group scheme structure. However, this  follows directly by another spreading out argument after  shrinking  the sources of the targets of the morphisms in question, since quasi-projectivity, smoothness, and the group law all spread out.
\end{proof}
\begin{rem}\label{def of quasi-projective models}
	Note that $\U$ resp.\ $\X$  in Lemma \ref{quasi-projective model for arithmetic Neron model} are quasi-projective resp.\ projective $\Z$-schemes via the finite (projective) morphism $\Spec O_K\to \Spec \Z$, and they can be regarded as {quasi-projective} resp.\ {projective models} of $\NN(A)^0$ over $\Z$ in the sense we defined before (\emph{cf}.\ Definitions \ref{projective model, adelic setting} and \ref{quasi-projective model, arithmetic} with $K=\Q$).
\end{rem}
Recall that the purpose for Lemma \ref{extension of line bundle, positivity} was to use a suitable positivity property of a model line bundle to provide an initial input for constructing the sequence of models. We  give an arithmetic analogue. 
\begin{defn}\label{projective model, arithmetic}
An integral scheme $\X$ that is flat and separated  over $O_K$ with smooth generic fiber is called an \textbf{arithmetic variety}; it is called a (quasi-)projective arithmetic variety, if the structure morphism $\X\to \Spec O_K$ is (quasi-)projective.
\end{defn}

Recall the corresponding positivity notion, \emph{cf}.\ \cite[Definition 5.38]{Moriwaki}.
\begin{defn}\label{def. of arith. nef}
	Let $\X$ be a projective arithmetic variety  and $\LL$ be a line bundle on $\X$ endowed with a continuous metric $\|\cdot\|$ on $\LL^\an$ over the complex manifold $\X^\an=\X(\C)$. We call $(\LL,\|\cdot\|)$ \textbf{arithmetically nef} if 
	\begin{itemize}
		\item $\LL$ is relatively nef w.r.t.\ $\X\to \Spec\Z$;
		\item $\|\cdot\|$ is semipositive;
		\item the associated height function $h_{(\LL,\|\cdot\|)}$  (see e.g.\ \cite[\S 9.1]{Moriwaki}) on $\X(\overline{\Q})$ is non-negative.  
	\end{itemize}
\end{defn}

\begin{lem}\label{extension of line bundle, positivity, arithmetic case}
	In Lemma \ref{quasi-projective model for arithmetic Neron model}, we can replace
	$\X$ by an integral projective  $O_K$-scheme~$\X'$ and replace the
	$\Q$-line bundle $\LL$ on $\X$ by a $\Q$-line bundle $\LL'$ on $\X'$ in such a way that
	\begin{enumerate}
		\item the $\Q$-line bundle $\LL'$ extends $\L$;
		\item $\LL'$ admits a continuous metric with the following property: if
		$\overline{\LL'}$ denotes the resulting metrized line bundle, then there exists
		an arithmetically nef line bundle $\overline{\mathscr{M}}$ on $\SS$ such that
		\[
		\overline{\LL'}+(\pi')^*\overline{\mathscr{M}}
		\]
		is arithmetically nef on $\X'$, where $\pi'\colon \X'\to \SS$ is the new
		structure morphism.
	\end{enumerate}
\end{lem}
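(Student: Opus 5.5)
We mimic the proof of Lemma \ref{extension of line bundle, positivity} (and its metrized version, Lemma \ref{extension of line bundle, positivity, anayltic}), now over $O_K$ and keeping track of heights. Start from the output of Lemma \ref{quasi-projective model for arithmetic Neron model}: a projective flat $\pi\colon \X\to\SS$ and a $\pi$-ample $\Q$-line bundle $\LL$ on $\X$ extending $\L$. Choose an ample line bundle $\mathscr{M}_0$ on the projective $O_K$-scheme $\SS$. Since $\SS$ is projective over the affine scheme $\Spec O_K$, Lemma \ref{GW, 13.65} gives an integer $m\geq 1$ such that $\LL+\pi^*(m\mathscr{M}_0)$ is ample on $\X$ (after clearing denominators of the $\Q$-line bundle). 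Some positive multiple $N(\LL+\pi^*m\mathscr{M}_0)$ is then very ample and defines a closed immersion $\X\hookrightarrow\P^n_{O_K}$. We set $\X'=\X$ (or its normalization, if normality is wanted) and $\LL'=\LL$. Because $\LL$ already extends $\L$, item (1) is automatic. The only reason to allow a new $\X'$ is so that we may blow up at centres away from $\U$ if this is needed to keep $\pi'$ a morphism; as in Lemma \ref{extension of line bundle, positivity}, any such blow-up preserves ampleness only up to relative ampleness, so the plan is to avoid blow-ups.

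For the metrics, we pull back the Fubini--Study metric on $\O(1)$ over $\P^n(\C)$. This gives a smooth semipositive metric on $N(\LL+\pi^*m\mathscr{M}_0)$, and hence on $\LL+\pi^*m\mathscr{M}_0$. The resulting metrized line bundle $\overline{H}$ is relatively nef (it is ample) and semipositive. Its height on $\X(\Qbar)$ is the Fubini--Study height of the projective embedding divided by $N$, and this is non-negative, since the standard Weil height with Fubini--Study archimedean contributions is $\geq 0$. Thus $\overline{H}$ is arithmetically nef in the sense of Definition \ref{def. of arith. nef}.

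Next we choose an arithmetically nef metric on $m\mathscr{M}_0$ on $\SS$. Embed $\SS$ by a multiple of $\mathscr{M}_0$ into projective space and take the pulled-back Fubini--Study metric. As above, this yields $\overline{\mathscr{M}}:=m\overline{\mathscr{M}_0}$, which is arithmetically nef on $\SS$. We then define the metric on $\LL'$ by $\overline{\LL'}:=\overline{H}-\pi^*\overline{\mathscr{M}}$, that is, the quotient metric, which is continuous (even smooth). With this choice $\overline{\LL'}+\pi^*\overline{\mathscr{M}}=\overline{H}$, which is arithmetically nef. This is item (2).

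The main point needing care is the non-negativity of heights, i.e.\ that Fubini--Study pullbacks give arithmetically nef bundles. We handle it by the standard fact that for $x\in\P^n(\Qbar)$ the height with respect to $(\O(1),\|\cdot\|_{FS})$ equals $\sum_v\log\max$-type local terms, which are bounded below by the naive height $\geq 0$ (cf.\ \cite[\S 9.1]{Moriwaki}). We also need to check that the $\Q$-line bundle bookkeeping is compatible, which amounts to dividing by the integers $N$ and the multiple used for $\SS$. If the height bound instead comes out only as $\geq -c$, we would fix it by scaling the archimedean metric of $\overline{\mathscr{M}}$ by $e^{-c'}$. This raises heights by a constant while preserving semipositivity, and it can be absorbed into $\overline{\mathscr{M}}$.
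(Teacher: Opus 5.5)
Your proof is correct. Its core is the paper's: use Lemma \ref{GW, 13.65} to make $\LL$ plus the pullback of an ample bundle from $\SS$ absolutely ample, give that bundle and the bundle on $\SS$ arithmetically nef metrics, and define $\overline{\LL'}$ as the difference.

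Where you differ is in the model.
\begin{itemize}
\item The paper uses only the $\V$-ampleness of $\LL_\U=\LL|_\U$. It embeds $\U$ into $\P^n_{O_K}$ via a multiple of $\LL_\U+(\pi|_\U)^*\mathcal{M}$ and takes the closure $\X'$. It then has to blow up to turn $\X'\dashrightarrow\SS$ into a morphism, and apply Raynaud--Gruson again to restore flatness. After that, the pulled-back $\O(1)$ is really only nef, not ample. This is harmless, because arithmetic nefness is preserved under pullback; the same fact would also have made blow-ups unproblematic in your setting.
\item You instead use item 5 of Lemma \ref{quasi-projective model for arithmetic Neron model}: $\LL$ is already $\pi$-ample on all of the projective $\X$. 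So $\LL+\pi^*(m\mathscr{M}_0)$ is ample on $\X$ itself, and you can take $\X'=\X$, $\pi'=\pi$, $\LL'=\LL$. This keeps flatness and every other property of that lemma for free.
\end{itemize}
Your explicit Fubini--Study argument also supplies what the paper states only as ``by ampleness''. Since $\sqrt{\sum_i|x_i|_v^2}\geq\max_i|x_i|_v$ at archimedean places, the height dominates the naive height and is therefore non-negative.

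What the paper's route buys in exchange is robustness. It would still work if one only had a $\V$-ample extension on the open part $\U$, with no $\pi$-ample extension over the whole compactification.
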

\begin{proof}
	The proof is similar to that of Lemma \ref{extension of line bundle, positivity}. Let $\LL_\U\coloneq \LL|_\U$, which is a $\V$-ample line bundle. The relative ampleness of $\L_\U$ and Lemma \ref{GW, 13.65} imply that there is an ample line bundle $\mathscr{M}$ on $\SS$ such that	 some positive multiple of $\L_\U+(\pi|_\U)^*\mathcal{M}$, where $\mathcal{M}\coloneq \mathscr{M}|_\V$, induces a locally closed immersion of $\U\hookrightarrow \P^n_{O_K}$ for some $n\geq 0$; let $\X'$ be the scheme-theoretic closure of $\U$ in $\P^n_{O_K}$. 
	
	By construction, we have a rational map $\X'\dashrightarrow \SS$. Upon blowing up $\X'$ further, we can turn it into an actual morphism $\pi'\colon \X'\to \SS$, which is still projective. By  Raynaud--Gruson flattening, we may assume that $\pi'$ is  flat. Note $\pi'|_\U=\pi|_\U$.
	
	Let $\O(1)$ be the line bundle on $\X$ obtained by restricting  the tautological line bundle on~$\P^n_{O_K}$ along the closed immersion $\X'\hookrightarrow \P^n_{O_K}$. By design, $\O(1)|_\U$ is isomorphic to a   $m(\LL_\U+(\pi|_\U)^*\mathcal{M})$ for some positive integer $m$, thus the (absolutely) ample $\Q$-line bundle $\LL_1\coloneq\frac{1}{m}\O(1)$  on $\X'$  extends $\LL_\U+(\pi|_\U)^*\mathcal{M}$. By ampleness, we can endow $\LL_1$ and $\mathscr{M}$ with semipositive metrics such that both becomes arithmetically nef; denote the resulting metrized line bundles by $\overline{\LL_1}$ resp.\ $\overline{\mathscr{M}}$. The desired new metrized $\Q$-line bundle $\overline{\LL'}$  is given by $\overline{\LL_1}-\pi^*\overline{\mathscr{M}}$. By construction, $\LL'|_\U\xrightarrow{\sim} \L$ as $\Q$-line bundles.
\end{proof}

The construction for a tuple representing the desired adelic extension is similar to the complex-analytic case.
\begin{conv}
	As in the complex-analytic case, in the following we assume all projective models of $\NN(A)^0$  to be normal.
\end{conv}
Relabel the pair $(\X',\overline{\LL'})$ from  Lemma \ref{extension of line bundle, positivity, arithmetic case} as $(\X,\overline{\LL})$, and continue with the notation of that lemma. Write $\LL_U=\LL|_\U$. We have a quasi-finite $\V$-morphism $f_\V=[2]\colon \U\to \U$ (Lemma \ref{quasi-projective model for arithmetic Neron model}). 
\begin{void}[sequence of models]
	With the above initial input and by resolving the indeterminacy loci (see \ref{Resolve indeterminancy locus}), we obtain a sequence of projective models $(\X_i)_{i\geq 1}$ of $\U$ by a similar construction to \ref{Sequence of projective models, geometric case}. Indeed, let $\X_0=\X$. The commutativity of the right square in Lemma \ref{quasi-projective model for arithmetic Neron model} implies that there exists  an induced open immersion $\U\times_\V\U\hookrightarrow \X_0\times_{\SS}\X_0$; let $\X_1\subset \X_0\times_{\SS} \X_0$ be the scheme-theoretic closure of the graph $\Gamma_{f_\V}\subset \U\times_\V\U$. Then the iteration procedure as in the complex-analytic case carries over and produces a sequence of projective models $(\X_i)_{i\geq 1}$ of $\U$ over $\Z$.
	
	In a similar vein, we obtain a sequence of   $\Q$-line bundles $(\LL_i)_{i\geq 1}$ on $\X_i$ such that
	\begin{itemize}
		\item	$\ell_i\colon \LL_\U\xrightarrow{\sim} \LL_i|_\U$;
		\item 	$f_{\V}^*\LL_\U\cong 4\LL_\U$.
	\end{itemize}
	As in \ref{metrized model line bundle}, each $\LL_i$ is endowed with a smooth metric $\|\cdot\|_i$ such that
	\[
	(\LL_i,\|\cdot\|_i)\cong \frac{1}{4}f_i^*\left(\LL_{i-1},\|\cdot\|_{i-1}\right)\in \overline{\Pic}(\X_i)_\Q
	\] is an isometry for every $i\geq 1$, where $f_i\colon \X_i\to \X_{i-1}$ is the resolved morphism such that $f_i|_\U=f_\V=[2]$.
\end{void}
With a similar proof as in Proposition \ref{adelic extension, geometric case}, we obtain the following extension result in the arithmetic situation.
\begin{prop}\label{adelic extension, arithmetic case}
	The sequence $(\LL_\U,(\X_i,\overline{\LL_i},\ell_i)_{i\geq 1})$ represents an arithmetic adelic line bundle on~$\U$. Then, by Definition \ref{arithmetic adelic divisors and line bundles on quasi-projective $K$-varieties}, we formally obtain an adelic extension $\widehat{\L}\in \widehat{\Pic}(\NN(A)^0/\Z)$ of~$\L$. 
\end{prop}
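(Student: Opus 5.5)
We follow the proof of Proposition \ref{adelic extension, geometric case} essentially verbatim, replacing the boundary data over $\C$ by arithmetic boundary data on the projective model $\X=\X_0$ of $\U$ over $\Z$ (Remark \ref{def of quasi-projective models}). The plan has two parts: first an algebraic telescoping identity for the model divisors, then a comparison with a metrized boundary divisor that makes the sequence Cauchy.

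\textbf{Step 1: the telescoping identity.} Choose a boundary divisor $\scrB$ of $\U$ on $\X_0$. After blowing up $\X_0$ along $\X_0\ohne\U$ and normalizing, it is an effective Cartier divisor with support exactly $\X_0\ohne\U$, by \ref{existence of metrized boundary divisor}. Equip it with a strictly positive continuous Green's function $g_\scrB$ on $\X_0(\C)$, which is possible by para-compactness.

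Define $g_1\colon\X_1\to\X_1$ and $g_i\colon\X_i\to\X_1$ as in \ref{Sequence of model line bundles}. The commutativity $f_i\circ p_{i+1}=p_i\circ f_{i+1}$ still holds, since both sides agree on the dense open $\U$ and the $\X_i$ are reduced and separated over $\SS$. Hence the inductive argument of Lemma \ref{lemma for Cauchy sequence, geometric} applies unchanged and gives
\[
\divisor(\ell_{i+1}\ell_i^{-1})=\tfrac{1}{4^i}g_{i+1}^*\divisor(\ell_1).
\]
This identity must hold at the level of metrized divisors, not just divisors. That follows from the isometries $(\LL_i,\|\cdot\|_i)\cong\frac14 f_i^*(\LL_{i-1},\|\cdot\|_{i-1})$. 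The required isometry over $\U$ is obtained exactly as in the analytic case, because $f_\V^*\LL_\U\cong 4\LL_\U$ is an isomorphism of rigidified line bundles, via Lemma \ref{equivalence of cat. of rigidified line bundle} and the cube theorem on the generic fiber after spreading out.

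\textbf{Step 2: the Cauchy estimate.} Since $D=\divisor_{\X_1}(\ell_1)$ is a $\Q$-Cartier divisor supported on $\X_1\ohne\U$, and $\scrB$ pulled back to $\X_1$ has the same support, there is $r\in\Q_{>0}$ with $-r\scrB\le D\le r\scrB$. Over the finite part this is a purely algebraic statement about multiplicities along the finitely many vertical and horizontal boundary components, and normality of the models makes the comparison of Weil and Cartier multiplicities legitimate.

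At the archimedean place, compactness of $\X_1(\C)$ and the local description of Green's functions give $|g_D|\le r g_\scrB+c$, exactly as in \eqref{inequality of Green's function}. Using $g_\scrB\ge b>0$, we upgrade to $-r'\overline{\scrB}\le\overline{\divisor}(\ell_1)\le r'\overline{\scrB}$. Pulling back by the morphism $g_{i+1}$ preserves effectivity of metrized divisors, since pullback of effective Cartier divisors along a dominant morphism of integral schemes is effective and nonnegative Green's functions stay nonnegative. The identity of Step 1 then gives the bound
\[
\pm\tfrac{r'}{4^i}\overline{\scrB}
\]
on consecutive differences. Summing the geometric series yields the Cauchy condition, so the tuple represents an element of $\widehat{\Pic}(\U/\Z)$. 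Its image in the colimit of Definition \ref{arithmetic adelic divisors and line bundles on quasi-projective $K$-varieties} gives $\widehat\L\in\widehat{\Pic}(\NN(A)^0/\Z)$ extending $\L$, using $\LL_\U|_{\NN(A)^0}\cong\L$.

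\textbf{Main obstacle.} I expect the delicate point to be the pullback $g_{i+1}^*\overline{\scrB}$ versus $\overline{\scrB}$ pulled back to $\X_{i+1}$ via the $p$'s. The pullback by the maps $f$ (``$[2]$'') of the boundary divisor need not be bounded by the boundary itself in a way compatible with different models. So rather than pulling $\scrB$ back by $g_{i+1}$, I would note that $g_{i+1}^{-1}(\X_1\ohne\U)\subset\X_{i+1}\ohne\U$. This holds because $f_\V^{-1}(\U)=\U$, which uses that $f_\V$ maps $\U$ to $\U$ and that the closure of the graph meets $\U\times\X$ only in the graph, by Lemma \ref{identified open}.

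This inclusion only bounds supports, though, and the real question is a uniform multiplicity bound. To obtain it, I would first try to exploit that $D$ is a pullback via $g_{i+1}$ together with the factor $4^{-i}$, comparing directly against the pullback of $\overline{\scrB}$ along the compatible projections $\X_{i+1}\to\X_0$. I would argue, as in \cite[Theorem 6.1.1]{YZ21}, that the boundary norm is independent of the model. If this direct comparison fails, the fallback is to replace $\scrB$ by $g_{i+1}^*\scrB$-type bounds, using the fact that $\U$ is invariant under $f_\V$ to keep the constants uniform in $i$.
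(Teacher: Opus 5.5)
Your Step~1 and the bound $-r'\overline{\scrB}\le\overline{\divisor}(\ell_1)\le r'\overline{\scrB}$ are fine and match the paper. The inference that closes Step~2 is not justified, however. Your last paragraph senses this but does not repair it. Pulling that bound back along $g_{i+1}$ and using Step~1 gives only
\[
-\tfrac{r'}{4^i}\,g_{i+1}^*\overline{\scrB}\le\overline{\divisor}(\ell_{i+1}\ell_i^{-1})\le\tfrac{r'}{4^i}\,g_{i+1}^*\overline{\scrB}.
\]
The Cauchy condition, by contrast, needs bounds by multiples of the \emph{fixed} $\overline{\scrB}$, pulled back along the projection $\X_{i+1}\to\X_0$. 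Nothing in your argument compares $g_{i+1}^*\overline{\scrB}$ with $\overline{\scrB}$ uniformly in $i$. The paper's proof of Proposition~\ref{adelic extension, geometric case}, to which this proposition defers, passes from \eqref{equality of model adelic divisors, geometric case} to \eqref{inequality of geometric divisor, 2} in exactly this way, so following it verbatim inherits the problem.

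When the family is projective over the base, as in Yuan--Zhang's dynamical construction, one may take $\scrB$ pulled back from the base. Then $g_{i+1}^*\scrB=\scrB$, because all maps lie over the base. Here $\NN(A)^0\to\Sbar$ has non-proper fibres over $\Sbar\ohne S$, so part of the boundary of $\U$ is not pulled back from the base and is moved by $[2]$.

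Your proposed remedies do not close the gap:
\begin{itemize}
\item The support inclusion $g_{i+1}^{-1}(\X_1\ohne\U)\subset\X_{i+1}\ohne\U$ gives $g_{i+1}^*\scrB\le C_i\scrB$ with no control on $C_i$.
\item Iterating a one-step bound $f^*\overline{\scrB}\le C\overline{\scrB}$ gives $C^i$, which is useless unless $C<4$.
\item Independence of the boundary norm only changes constants by a fixed factor.
\end{itemize}
Constants uniform in $i$ are in fact impossible. Take an elliptic curve with $I_1$ reduction at $s_0\in\Sbar\ohne S$. Let $u,w$ be local coordinates at the node of the Weierstrass model with $uw=s$, and let $\scrB$ be, locally, the exceptional curve of the blow-up of the node. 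The monomial valuation $v_\theta$ with $v_\theta(u)=\theta$, $v_\theta(w)=1-\theta$ satisfies $v_\theta(\scrB)=\min(\theta,1-\theta)$. Since $[2]$ acts by $\theta\mapsto 2\theta$ modulo~$1$, one gets $v_\theta(g_{i+1}^*\scrB)/v_\theta(\scrB)=2^i$ at $\theta=2^{-i-1}$.

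What is missing is a growth estimate $g_{i+1}^*\overline{\scrB}\le C\rho^i\,\overline{\scrB}$ with $\rho<4$ and $C$ independent of $i$. It is needed both for the divisor parts and for the archimedean Green's functions, and it is insensitive to the choice of $\overline{\scrB}$. With it, consecutive differences are bounded by $r'C(\rho/4)^i\,\overline{\scrB}$, which yields the Cauchy condition, at rate $(\rho/4)^i$ rather than $4^{-i}$. Proving it requires genuine input on $[2]$ near the boundary of $\NN(A)^0$. For instance, via Raynaud's description of the degeneration, $[2]$ acts by doubling on the skeleton, and a suitable boundary function is comparable to the distance to the identity-component point. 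This gives $\rho=2$.
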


Let $j\colon A\hookrightarrow \NN(A)^0$ be the inclusion, $e\colon  \Sbar\to \NN(A)^0$ and $e_A=e|_A\colon S\to A$ the identity sections of the respective group schemes. One defines the group of isomorphism classes of rigidified adelic line bundles in the arithmetic setting completely analogously to the complex-analytic case (Definition \ref{Def: rigidified adelic line bundle}).  
\begin{rem}\label{Rmk: rigidified adelic line bundle}
	Let $\widehat{\L}$ be an adelic line bundle on $\NN(A)^0$, rigidified along the identity section of $\NN(A)^0/\Sbar$. By Definition \ref{arithmetic adelic divisors and line bundles on quasi-projective $K$-varieties} and by a spreading out argument, one can find  quasi-projective $\Z$-models $\U\supset \NN(A)^0$ and  $\V\supset \Sbar$  as in Lemma \ref{quasi-projective model for arithmetic Neron model}, and an adelic line bundle $\widehat{\L}_\U$ on $\U$ such that $\widehat{\L}_\U|_{\NN(A)^0}\cong \widehat{\L}$ and $\widehat{\L}_\U$ is rigidified along the identity section of $\U/\V$.  
\end{rem}
The following lemma is analogous to Lemma \ref{injection of rigidified adelic line bundle} and is proved similarly. 
\begin{lem}
	The  restriction map
	\[
	j^*\colon \widehat{\Pic}(\NN(A)^0/\Z)\to \widehat{\Pic}(A/\Z).
	\]
	is injective.	
\end{lem}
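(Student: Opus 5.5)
The plan is to mimic the complex-analytic proof, which appealed to \cite[Corollary 3.4.2]{YZ21}. That corollary says restriction of adelic line bundles along an open immersion of normal quasi-projective schemes with the same function field is injective. In the arithmetic setting, $\widehat{\Pic}(\NN(A)^0/\Z)$ and $\widehat{\Pic}(A/\Z)$ are defined as filtered colimits over quasi-projective models (Definition \ref{arithmetic adelic divisors and line bundles on quasi-projective $K$-varieties}). So I first reduce to a statement about a fixed pair of quasi-projective $\Z$-models.

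Take a class $\widehat{\L}\in\widehat{\Pic}(\NN(A)^0/\Z)$ with $j^*\widehat{\L}$ trivial. By definition there is a quasi-projective model $\U\supset \NN(A)^0$ over some open $V\subset\Spec O_K$ on which $\widehat{\L}$ is represented by some $\widehat{\L}_\U\in\widehat{\Pic}(\U/\Z)$. Passing to the normalization is harmless, since $\NN(A)^0$ is regular; so we may take $\U$ normal, as in the arithmetic convention above. The scheme $A$ is open in $\NN(A)^0$, so $A$ is open in $\U_K$. Let $\U_A\subset\U$ be the complement of the closure of $\NN(A)^0_K\smallsetminus A$ inside $\U$, intersected with a suitable open. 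Then $\U_A$ is a normal quasi-projective model of $A$. The models of the form $\U_A$ are cofinal among models of $A$, since any model of $A$ can be dominated after shrinking and blowing up away from $A$.

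Triviality of $j^*\widehat{\L}$ in the colimit means that $\widehat{\L}_\U|_{\U'}$ is trivial for some smaller open model $\U'\subset\U_A$ of $A$. Here one may have to first replace $\U$ by a smaller model $\U''\supset\NN(A)^0$ with $\U'=\U''\cap\U_A$, again by cofinality. Now $\U'\subset\U''$ is a dense open immersion of normal integral quasi-projective $\Z$-schemes, both with function field $K(A)$. Hence \cite[Corollary 3.4.2]{YZ21} applies and gives $\widehat{\L}_{\U''}$ trivial. Therefore $\widehat{\L}$ is trivial in the colimit, which proves injectivity.

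The main obstacle is the bookkeeping in the colimit. The models witnessing triviality over $A$ need not a priori come from models of $\NN(A)^0$, and restriction along $\U''\to\U$ must respect the colimit transition maps. I would handle this as follows. Any quasi-projective model $\mathcal{W}$ of $A$ and the model $\U$ can be dominated by the closure of the diagonal image of $A$ in $\mathcal{W}\times_V\U$, after normalizing. Glueing this closure to $\U$ along their common open part, which contains $A$, yields a model of $\NN(A)^0$ that restricts to a model of $A$ refining $\mathcal{W}$. Then Corollary 3.4.2 is applied at that level.
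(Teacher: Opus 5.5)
Your argument rests on the same key input as the paper, \cite[Corollary 3.4.2]{YZ21} for normal schemes whose inclusion induces an isomorphism of function fields. The paper applies it directly to $\NN(A)^0$ and $A$, viewed as normal, flat, essentially quasi-projective $\Z$-schemes, so your reduction to fixed quasi-projective models is unnecessary.

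If you keep that reduction, replace the gluing in your last paragraph: gluing $\mathcal{T}$ to $\U$ along a common open over which $\mathcal{T}\to\U$ is an isomorphism produces a non-separated scheme, as with the line with doubled origin. Use instead that the closure of $A$ in $\mathcal{W}\times_V\U_A$ has generic fiber $A=(\U_A)_K$, so it becomes isomorphic to $\U_A$ after shrinking $V$; this already gives the cofinality you need.
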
	
\begin{proof}
	The claim follows directly from \cite[Corollary 3.4.2]{YZ21}: the schemes $\NN(A)^0$ and~$A$ are normal, essentially projective and flat over $\Z$, and the inclusion~$j$ induces an isomorphism  of the function fields $K(A)\cong K(\NN(A)^0)$ (recall that $K$ denotes a number field, which is finitely generated over $\Z$, following   the convention in \cite[\S 1.5]{YZ21}.)
\end{proof}	
\begin{lem}\label{injection of rigidified adelic line bundle, arithmetic}
	There exists a commutative diagram
	\[
	\begin{tikzcd}
		\widehat{\Pic}(\NN(A)^0/\Z)  \ar[hook]{r}{j^*} & \widehat{\Pic}(A/\Z) \\	
		\widehat{\Pic}(\NN(A)^0/\Z)^e \ar[hook]{u}\ar{r}& \widehat{\Pic}(A/\Z)^{e_A}\ar[hook]{u} 
	\end{tikzcd}
	\] Therefore, the lower horizontal map is also injective.
\end{lem}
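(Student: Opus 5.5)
The commutativity is formal. The vertical maps forget the rigidification and are injective by definition: the rigidified groups are defined as subgroups of isomorphism classes, namely those classes whose members admit a rigidification along $e$ (resp.\ $e_A$). So the plan is, first, to define the lower horizontal map by restriction along $j$. Given $(\widehat{\L},\phi)$ with $\phi\colon e^*\widehat{\L}\xrightarrow{\sim}\widehat{\O}_{\Sbar}$, set its image to be $(j^*\widehat{\L},\phi|_S)$.

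Here $\phi|_S$ is obtained as follows. Since $e\circ \iota = j\circ e_A$, where $\iota\colon S\hookrightarrow \Sbar$ is the inclusion, functoriality of pullback of adelic line bundles (\ref{pullback of adelic line bundles}, \cite[\S 2.5.5]{YZ21}) gives
\[
e_A^*j^*\widehat{\L}\cong \iota^*e^*\widehat{\L}\xrightarrow{\iota^*\phi}\iota^*\widehat{\O}_{\Sbar}=\widehat{\O}_S .
\]
This is a rigidification of $j^*\widehat{\L}$ along $e_A$. I would check briefly that this is well defined on isomorphism classes. An isomorphism $u$ of rigidified adelic line bundles compatible with the rigidifications pulls back to an isomorphism $j^*u$ compatible with the restricted rigidifications, again by functoriality of pullback. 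The same functoriality shows the map respects tensor products, so it is a group homomorphism.

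Second, with this definition the square commutes tautologically: forgetting the rigidification and then applying $j^*$ agrees with restricting the pair and then forgetting. Third, injectivity of the lower map follows by a diagram chase. The composite from the lower left corner to the upper right equals $j^*$ composed with an injection, hence is injective. It also equals the lower map followed by the right vertical map, so the lower map must be injective. The upper $j^*$ is injective by the preceding lemma.

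The only point requiring care, and hence the main obstacle, is that in the arithmetic setting adelic line bundles on $\NN(A)^0$ and $\Sbar$ are colimits over quasi-projective models. So I need the identity $e\circ\iota=j\circ e_A$ and pullback functoriality to hold compatibly in $\widehat{\Pic}(-/\Z)$. I would handle this via Remark \ref{Rmk: rigidified adelic line bundle}: represent $(\widehat{\L},\phi)$ on models $\U\supset\NN(A)^0$, $\V\supset\Sbar$ as in Lemma \ref{quasi-projective model for arithmetic Neron model}, where the identity section spreads out. There the argument is the same as in the complex case, and passing to the colimit preserves everything. As in Lemma \ref{injection of rigidified adelic line bundle}, I then expect the proof to reduce to ``immediate from the definitions''.
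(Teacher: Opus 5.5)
Your proposal is correct and is essentially the paper's argument, which simply states that the lemma follows immediately from the definitions: the lower map is restriction along $j$, with the rigidification transported via $e\circ\iota=j\circ e_A$; the square commutes tautologically; and injectivity follows from injectivity of the upper $j^*$ together with injectivity of the forgetful inclusions. The colimit issue you flag is not a real obstacle, because the pullback of \cite[\S 2.5.5]{YZ21} is already functorial on $\widehat{\Pic}(-/\Z)$ for essentially quasi-projective schemes, so no separate spreading-out step is needed.
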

\begin{proof}
	This follows immediately from the definition.
\end{proof}

\begin{prop}
	The adelic extension $\widehat{\L}\in \widehat{\Pic}(\NN(A)^0/\Z)$ obtained in Proposition \ref{adelic extension, arithmetic case} is unique with the property that $[n]^*\widehat{\L}=n^2\widehat{\L}$ for all $n\in \Z$.
\end{prop}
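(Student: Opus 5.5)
The plan mirrors the proof of Proposition~\ref{uniqueness of adelic extension, geometric case}, with the arithmetic injectivity result (Lemma~\ref{injection of rigidified adelic line bundle, arithmetic}) replacing the complex-analytic one. The argument has three steps: obtain the relation $[2]^*\widehat{\L}\cong 4\widehat{\L}$ from the construction, prove uniqueness under $[2]$ by restricting to $A$, and deduce the relation for all $n$ from uniqueness.

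\emph{Step 1: $[2]^*\widehat{\L}\cong 4\widehat{\L}$ for the constructed bundle.} Let $(\LL_\U,(\X_i,\overline{\LL_i},\ell_i)_{i\geq 1})$ be the tuple of Proposition~\ref{adelic extension, arithmetic case}. Pulling back along $f_\V=[2]$ (in the sense of \ref{pullback of adelic line bundles}), the pullback $[2]^*\widehat{\L}$ is represented by $(f_\V^*\LL_\U,(\X_{i+1},f_{i+1}^*\overline{\LL_i},\ldots))$. By the isometries $\overline{\LL_{i+1}}\cong\frac14 f_{i+1}^*\overline{\LL_i}$, this tuple is $4$ times the shifted tuple $(\X_{i+1},\overline{\LL_{i+1}},\ell_{i+1})$. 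The shifted tuple represents the same adelic line bundle, since a Cauchy sequence and its tail are equivalent. The isomorphism on the underlying bundles is the unique rigidified isomorphism $f_\V^*\LL_\U\cong 4\LL_\U$, and it is compatible with the $\ell_i$ by construction. Hence $[2]^*\widehat{\L}\cong 4\widehat{\L}$.

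Next, fix a rigidification of $\widehat{\L}$ along $e$. One must check that the rigidification of $e^*\widehat{\L}$ can be normalized, i.e.\ that $e^*\widehat{\L}$ is trivial. Since $[2]\circ e=e$, we get $e^*\widehat{\L}\cong 4\,e^*\widehat{\L}$, hence $3\,e^*\widehat{\L}\cong 0$. After passing to $\Q$-coefficients, or by twisting $\widehat{\L}$ by the pullback of $e^*\widehat{\L}$ along the structure morphism (which preserves the relation $[2]^*\widehat\L\cong4\widehat\L$ up to a torsion correction), we may take $\widehat{\L}$ rigidified. Remark~\ref{Rmk: rigidified adelic line bundle} lets us realize all of this on quasi-projective models $\U/\V$.

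\emph{Step 2: uniqueness under $[2]$.} Suppose $\widehat{\L}_1$ and $\widehat{\L}_2$ are rigidified adelic extensions of $\L$ with $[2]^*\widehat{\L}_k\cong 4\widehat{\L}_k$. Their restrictions $j^*\widehat{\L}_k$ are rigidified adelic extensions of $L$ on $A$ satisfying the same relation. By the uniqueness part of \cite[Theorem~6.1.3]{YZ21} (arithmetic case), $j^*\widehat{\L}_1\cong j^*\widehat{\L}_2$ as rigidified adelic line bundles. The lower horizontal map of Lemma~\ref{injection of rigidified adelic line bundle, arithmetic} is injective, so $\widehat{\L}_1\cong\widehat{\L}_2$.

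\emph{Step 3: all $n$.} For $n\neq 0$, set $\widehat{\L}'\coloneq n^{-2}[n]^*\widehat{\L}$; this is a $\Q$-adelic extension of $\L$, because $[n]^*\L\cong n^2\L$ as rigidified bundles by Lemma~\ref{equivalence of cat. of rigidified line bundle} and the theorem of the cube. Since $[2]$ and $[n]$ commute on $\NN(A)^0$, we have $[2]^*\widehat{\L}'\cong 4\widehat{\L}'$. Step 2 then gives $\widehat{\L}'\cong\widehat{\L}$. The case $n=0$ is exactly the rigidification, and $n=-1$ follows from the symmetry of $\L$ together with the same uniqueness argument.

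The main obstacle is Step 1: making precise that pullback of adelic line bundles along the quasi-finite map $[2]$ is computed model-wise by the resolved maps $f_i$, and that shifting indices does not change the adelic class. This requires working with a fixed metrized boundary divisor on a common model and checking that $f_i^*$ of the boundary divisor is bounded by a multiple of the boundary divisor. I would handle this first by comparing $f_{i+1}^*\overline{\scrB}$ with $\overline{\scrB}$ on $\X_{i+1}$, using that $f_\V^{-1}(\U)=\U$ for a model on which $[2]$ is étale. A secondary subtlety is arranging that the $\Z$-models $\U$ for $\widehat{\L}$ and for $j^*\widehat{\L}$ are compatible; the colimit definition (Definition~\ref{arithmetic adelic divisors and line bundles on quasi-projective $K$-varieties}) absorbs this.
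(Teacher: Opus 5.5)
Your proposal is correct and follows the paper's route. The paper likewise reduces, via the injectivity of $\widehat{\Pic}(\NN(A)^0/\Z)^e\to\widehat{\Pic}(A/\Z)^{e_A}$, to the uniqueness in Yuan--Zhang's Theorem 6.1.3, and then bootstraps from $[2]$ to all $n$ exactly as in the complex-analytic case; your Step~1 spells out the $[2]$-relation and the rigidification, which the paper takes for granted. One small cleanup in Step~3: instead of the $\Q$-bundle $n^{-2}[n]^*\widehat{\L}$, compare $[n]^*\widehat{\L}$ and $n^2\widehat{\L}$ directly as rigidified integral extensions of $n^2\L$ satisfying the $[2]$-relation, so that Step~2 applies verbatim without passing to $\Q$-coefficients.
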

\begin{proof}
	By Lemma \ref{injection of rigidified adelic line bundle, arithmetic} and an argument similar to that in Proposition \ref{uniqueness of adelic extension, geometric case} (choosing a quasi-projective model $\NN(A)^0\subset \U$ as in Lemma \ref{quasi-projective model for arithmetic Neron model}), we reduce the uniqueness statement to the corresponding uniqueness statement in the arithmetic case proved by Yuan--Zhang \cite[Theorem 6.1.3]{YZ21}.
\end{proof}
Finally, we want to show that the adelic extension $\widehat{\L}$ is integrable.
\begin{defn}
	Let $\widehat{\L}_\U$ be an adelic line bundle on a  quasi-projective arithmetic variety~$\U$ (Definition \ref{projective model, arithmetic}). We call $\widehat{\L}_\U$ \textbf{strongly nef} if it is isomorphic to an adelic line bundle on $\U$ represented by a tuple $(\L_\U,(\X_i,\overline{\LL_i},\ell_i)_{i\geq 1})$, where  the metrized $\Q$-line bundles $\overline{\LL_i}$ on $\X_i$ are arithmetically nef for all $i\geq 1$. Informally,   $\widehat{\L}_\U$  is said to be strongly nef if it is the limit of nef model line bundles. It is called \textbf{nef} if there exists a strongly nef adelic line bundle $\widehat{N}$ on $\U$ and a sequence $(r_i)_{i\geq 1}$ of positive rational numbers converging to zero such that $\widehat{\L}_\U+r_i\widehat{N}$ is strongly nef for all $i\geq 1$. An adelic line bundle is called \textbf{integrable} if it is a difference of two  nef adelic line bundles. 
\end{defn}
Let $X$ be a smooth quasi-projective variety over a number field $K$, and let $\widehat{\L}$ be an adelic line bundle on $X$. By Definition \ref{arithmetic adelic divisors and line bundles on quasi-projective $K$-varieties}, there is a quasi-projective model $\U$ of $X$ and an adelic line bundle $\widehat{\L}_\U$ on $\U$ such that $\widehat{\L}_\U|_X\cong \widehat{\L}$. 
\begin{defn}
	We call an adelic line bundle $\widehat{\L}$ on $X$ \textbf{strongly nef} resp.\ \textbf{nef} resp.\ \textbf{integrable} if there is a quasi-projective model $\U\supset X$ and a strong nef resp.\ nef.\ resp.\ integrable adelic line bundle $\widehat{\L}_\U$ on $\U$ such that $\widehat{\L}_\U|_X\cong \widehat{\L}$.
\end{defn}
Resume the notation that $A/S$ is an abelian scheme over a smooth curve $S$ over a number field $K$; let $S\subset \Sbar$ be a smooth partial compactification. Let $L$ be a rigidified symmetric $S$-ample line bundle on $A$. Let $\L$ be the unique  line bundle on $\NN(A)^0$ extending $L$ with similar properties (Lemma \ref{ample extension}).
\begin{lem}
	The adelic extension $\widehat{\L}$ from Proposition \ref{adelic extension, arithmetic case} is nef.
\end{lem}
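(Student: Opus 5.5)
The plan is to copy the proof of Lemma \ref{nef adelic extension, geometric case}, with Lemma \ref{extension of line bundle, positivity, arithmetic case} supplying the initial positive input.

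\textbf{Step 1 (initial input).} By Lemma \ref{extension of line bundle, positivity, arithmetic case}, we may start the construction of Proposition \ref{adelic extension, arithmetic case} from a normal projective model $\X_0=\X$ with structure map $\pi\colon\X\to\SS$ and a metrized $\Q$-line bundle $\overline{\LL}$ extending $\L$. It comes with an arithmetically nef $\overline{\mathscr{M}}$ on $\SS$ such that $\overline{\LL'}\coloneq\overline{\LL}+\pi^*\overline{\mathscr{M}}$ is arithmetically nef on $\X$. This choice is harmless: by the uniqueness proposition just proved, any initial input yields the same adelic line bundle $\widehat{\L}$. Write $\overline{N}_i$ for the pullback of $\overline{\mathscr{M}}$ to $\X_i$ along the structure map $\X_i\to\SS$.

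\textbf{Step 2 (nefness propagates along the tower).} The resolved maps $f_i\colon\X_i\to\X_{i-1}$ are $\SS$-morphisms, because $f_\V=[2]$ is a $\V$-morphism and the graph closure in \ref{Resolve indeterminancy locus} is taken inside $\X_{i-1}\times_\SS\X_{i-1}$. Hence $f_i^*\overline{N}_{i-1}=\overline{N}_i$.

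The iterated pullback of $\overline{\LL'}$ to $\X_i$ is arithmetically nef. Indeed, pullback along a morphism of projective arithmetic varieties preserves each of the three conditions in Definition \ref{def. of arith. nef}:
\begin{itemize}
\item relative nefness, by the projection formula on curves;
\item semipositivity of continuous metrics;
\item nonnegativity of heights, since heights are functorial.
\end{itemize}
Dividing by $4^i$ and using the isometries $\overline{\LL_i}\cong\frac14 f_i^*\overline{\LL_{i-1}}$, we get that
\[
\overline{\LL_i}+4^{-i}\overline{N}_i
\]
is arithmetically nef on $\X_i$ for every $i\geq 0$.

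\textbf{Step 3 (passing to the adelic line bundle).} Let $\widehat N$ be the adelic line bundle on $\U$ given by the pullback of $\overline{\mathscr{M}}$ along $\U\to\V\hookrightarrow\SS$. It is a model adelic line bundle and is strongly nef. For fixed $a\geq 1$, the shifted tuple $(\LL_\U,(\X_{i+a},\overline{\LL_{i+a}},\ell_{i+a})_{i\geq1})$ represents the same $\widehat{\L}$. After adding the pulled-back models $4^{-a}\overline{N}_{i+a}$ termwise, it becomes a sequence of arithmetically nef models representing $\widehat{\L}+4^{-a}\widehat N$, because
\[
4^{-a}\overline{N}_{i+a}=\bigl(\overline{\LL_{i+a}}+4^{-(i+a)}\overline{N}_{i+a}\bigr)+\bigl(4^{-a}-4^{-(i+a)}\bigr)\overline{N}_{i+a}+\ldots
\]
More precisely, $\overline{\LL_{i+a}}+4^{-a}\overline{N}_{i+a}$ is the sum of the nef bundle $\overline{\LL_{i+a}}+4^{-(i+a)}\overline{N}_{i+a}$ and the nonnegative multiple $(4^{-a}-4^{-(i+a)})\overline{N}_{i+a}$ of a nef bundle. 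So $\widehat{\L}+4^{-a}\widehat N$ is strongly nef for all $a$, and therefore $\widehat{\L}$ is nef on $\U$. Restricting along $\NN(A)^0\hookrightarrow\U$ gives nefness in the sense of the last definition.

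\textbf{Main obstacle.} I expect the main difficulty in Step 2, namely checking that $f_i$ is compatible with the structure maps to $\SS$. This is what makes $f_i^*\overline{N}_{i-1}=\overline{N}_i$ hold on the nose, not merely over $\U$. I would verify it by noting that the two compositions $\X_i\to\SS$ agree on the dense open $\U$ and that $\SS$ is separated while $\X_i$ is reduced. A secondary point is that the models $\X_i$ must be arithmetic varieties (generic fiber smooth) for Definition \ref{def. of arith. nef} to apply. If this fails after normalization, I would either work with a resolution of the generic fiber or use the variant of arithmetic nefness for normal projective models, as in \cite{YZ21}.
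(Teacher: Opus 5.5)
Your proposal is correct and is the same argument as the paper's, which simply reruns the proof of Lemma~\ref{nef adelic extension, geometric case} with the initial input from Lemma~\ref{extension of line bundle, positivity, arithmetic case}. You also fill in details the paper leaves implicit: the compatibility of the $f_i$ with the maps to $\SS$, the nefness of $\overline{\LL_{i+a}}+4^{-a}\overline{N}_{i+a}$, and the caveat that the normal models may have non-smooth generic fibers. The only slip is the first display of Step~3: its left-hand side should read $\overline{\LL_{i+a}}+4^{-a}\overline{N}_{i+a}$, and it has a stray ``$+\ldots$''; the sentence after it states the correct decomposition.
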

\begin{proof}
	Let $\U$ be a quasi-projective model of $\NN(A)^0$ as constructed in Lemma \ref{quasi-projective model for arithmetic Neron model}.  A direct modification of the proof of Proposition \ref{nef adelic extension, geometric case}, together with  Lemma \ref{extension of line bundle, positivity, arithmetic case}, shows that there is a nef adelic line bundle~$\widehat{\L}_\U$ on $\U$ such that $\widehat{\L}_\U|_{\NN(A)^0}\cong \widehat{\L}$. By  definition, this means that $\widehat{\L}$ itself is nef.
\end{proof}
Then we deduce the general case as in the complex-analytic situation (see the claims leading to Theorem \ref{adelic extension summary, analytic case}). Let $L$ be either a symmetric or an antisymmetric rigidified line bundle on $A/S$. Let $\L$ be the unique extension of $L$ with similar properties.
\begin{prop}
	The adelic extension $\widehat{\L}$ from Proposition \ref{adelic extension, arithmetic case} is integrable.
\end{prop}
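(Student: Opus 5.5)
The plan mirrors the complex-analytic argument (Lemmas \ref{integrabl adelic extension, symmetric, geometric} and \ref{integrabl adelic extension, antisymmetric, geometric}) and treats the symmetric and antisymmetric cases separately, with the symmetric case done first.

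\emph{Symmetric case.} I decompose $\L$ as a difference of two rigidified, symmetric, relatively ample line bundles on $\NN(A)^0$. Since $\NN(A)^0/\Sbar$ is quasi-projective, it carries a relatively ample $H_0$. Put $H\coloneq H_0+[-1]^*H_0$, which is symmetric and relatively ample, and rigidify it by twisting with the pullback of $e^*H$. By Lemma \ref{GW, 13.62} there is an $n>0$ such that $\L^+\coloneq\L+nH$ is relatively ample; set $\L^-\coloneq nH$. Applying the arithmetic construction (Proposition \ref{adelic extension, arithmetic case}) to $\L^\pm$ yields adelic extensions $\widehat{\L^\pm}$, and these are nef by the preceding lemma. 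Then $\widehat{\L^+}-\widehat{\L^-}$ is an integrable adelic extension of $\L$, and it satisfies $[2]^*(\cdot)\cong 4(\cdot)$ because both terms do. By the uniqueness proposition (which rests on Lemma \ref{injection of rigidified adelic line bundle, arithmetic} and \cite[Theorem 6.1.3]{YZ21}), it is isomorphic to $\widehat{\L}$. Hence $\widehat{\L}$ is integrable.

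\emph{Antisymmetric case.} Applying the symmetric case to the extended Poincaré bundle $\overline{\PP}$ on $\NN(A)^0\times_{\Sbar}\NN(A^\vee)^0$, which is symmetric by Remark \ref{symmetry of Poincare bundle}, gives an integrable $\widehat{\overline{\PP}}$ with $[2]'^*\widehat{\overline{\PP}}\cong 2\widehat{\overline{\PP}}$ for the partial multiplication $[2]'$. To get this relation from the total one, I apply the uniqueness argument to $\tfrac12[2]'^*\widehat{\overline{\PP}}$, which extends $\overline{\PP}$ by the biextension structure and also satisfies $[2]^*(\cdot)\cong 4(\cdot)$. Next I view $L$ as a section $\sigma\in A^\vee(S)$. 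Some multiple $n\sigma$ extends to a section $\bar\sigma$ of $\NN(A^\vee)^0$, and then $n\L\cong(1,\bar\sigma\circ\bar\pi)^*\overline{\PP}$ as rigidified bundles by Lemma \ref{equivalence of cat. of rigidified line bundle}. The pullback $\tfrac1n(1,\bar\sigma\circ\bar\pi)^*\widehat{\overline{\PP}}$ is integrable, since pullback preserves integrability \cite[\S 2.5.5]{YZ21}. It satisfies the antisymmetric dynamical relation, so it agrees with $\widehat{\L}$ by uniqueness.

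\emph{Main obstacle.} The step I expect to need the most care is making all of this work on a single quasi-projective $\Z$-model. Integrability of $\widehat{\L}$ is defined via some model $\U\supset\NN(A)^0$. So I would pick, by a spreading-out argument as in Lemma \ref{quasi-projective model for arithmetic Neron model} and Remark \ref{Rmk: rigidified adelic line bundle}, a common model $\U$ on which $H$, $[-1]$, $[2]$ and the section $\bar\sigma$ all extend, shrinking $\V$ if necessary. The differences and pullbacks are then taken there, and restricting back to $\NN(A)^0$ via the colimit of Definition \ref{arithmetic adelic divisors and line bundles on quasi-projective $K$-varieties} gives integrability of $\widehat{\L}$.
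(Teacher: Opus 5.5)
Your proposal is correct and follows the paper's own route: the paper proves this statement only by pointing back to the complex-analytic Lemmas \ref{integrabl adelic extension, symmetric, geometric} and \ref{integrabl adelic extension, antisymmetric, geometric}, which likewise write $\L=\L^+-\L^-$ with symmetric relatively ample $\L^\pm$ having nef extensions, then handle the antisymmetric case by pulling back $\widehat{\overline{\PP}}$ along $(1,\bar\sigma\circ\bar\pi)$ and invoking uniqueness. Your extra steps fill in details the paper leaves implicit: deriving $[2]'^*\widehat{\overline{\PP}}\cong 2\widehat{\overline{\PP}}$ from uniqueness, working with $n\L$ and dividing by $n$ (the paper's version drops this factor $n$), and fixing a common quasi-projective $\Z$-model.
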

Let us summarize our findings in this section.
\begin{thm}\label{adelic extension summary, arithmetic case}
	Let $A/S$ be an abelian scheme over a smooth curve $S$ over a number field $K$. Let $S\subset \Sbar$ be a smooth partial compactification over $K$ and let $\NN(A)^0/\Sbar$ be the identity component of the Néron model of $A/S$ over $\Sbar$. Let $L$ be  either a symmetric or an antisymmetric rigidified line bundle on $A$ with the unique rigidified extension $\L$ on~$\NN(A)^0$. Suppose $L$ is endowed with an admissible metric, normalized along the identity section. There exists an integrable arithmetic adelic extension $\widehat{\L}$ of $\L$, which is unique with the dynamical property that
	\[
	[n]^*\widehat{\L}\cong n^\epsilon\widehat{\L}\quad\text{for all }n\in \Z,
	\] and which extends the isomorphism $[n]^*\widehat{L}\cong n^\epsilon \widehat{L}$ from \cite[Theorem 6.1.3]{YZ21} for all $n\in \Z$; here $\epsilon=2$ if $L$ is symmetric, and $\epsilon=1$ if $L$ is antisymmetric. Moreover, if $L$ is relatively ample, then  $\widehat{\L}$ is nef.
\end{thm}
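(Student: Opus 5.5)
The proof assembles the arithmetic results of this section in the same order as the complex-analytic argument behind Theorem \ref{adelic extension summary, analytic case}. I treat the symmetric and antisymmetric cases separately.

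\emph{Symmetric, relatively ample $L$.} Lemma \ref{ample extension} gives the relatively ample rigidified extension $\L$. Proposition \ref{adelic extension, arithmetic case} then produces an arithmetic adelic extension $\widehat{\L}$, with the initial model chosen via Lemmas \ref{quasi-projective model for arithmetic Neron model} and \ref{extension of line bundle, positivity, arithmetic case}. By the nefness lemma preceding the theorem, $\widehat{\L}$ is nef.

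The dynamical property $[2]^*\widehat{\L}\cong 4\widehat{\L}$ holds by construction, since $\overline{\LL_i}\cong \frac14 f_i^*\overline{\LL_{i-1}}$. To upgrade it to all $n\in\Z$, I replace $\widehat{\L}$ by $\widehat{\L}'\coloneq n^{-2}[n]^*\widehat{\L}$. This is again an adelic extension of $\L$ satisfying the $[2]$-relation, so the uniqueness proposition (reduced via Lemma \ref{injection of rigidified adelic line bundle, arithmetic} to \cite[Theorem 6.1.3]{YZ21}) gives $\widehat{\L}'\cong\widehat{\L}$. The same injectivity of $j^*$ shows that $j^*\widehat{\L}=\widehat{L}$ and that the isomorphisms restrict to those of \cite[Theorem 6.1.3]{YZ21}. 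Here both sides are rigidified extensions with the same $[n]$-behaviour, and the normalization of the admissible metric matches the rigidification.

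\emph{General symmetric $L$.} As in Lemma \ref{integrabl adelic extension, symmetric, geometric}, I write $\L=\L^+-\L^-$, where $\L^{\pm}$ are symmetric and relatively ample: take $H=H_0+[-1]^*H_0$, $\L^-=nH$ and $\L^+=\L+nH$. The difference of the two nef extensions is then an integrable extension satisfying the dynamical property. Uniqueness again follows from the injectivity lemma and Yuan--Zhang.

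\emph{Antisymmetric $L$.} First I obtain $\widehat{\overline{\PP}}$ on $\NN(A)^0\times_{\Sbar}\NN(A^\vee)^0$ from the symmetric case. This requires the product to be treated as the identity component of the Néron model of $A\times_S A^\vee$, with the same model-building lemmas applied to it. Then I copy the proof of Lemma \ref{integrabl adelic extension, antisymmetric, geometric}. The bundle $L$ corresponds to a section $\sigma$, and some multiple $m\sigma$ extends to $\bar\sigma$. The pullback $(1,\bar\sigma\bar\pi)^*\widehat{\overline{\PP}}$ satisfies the $[2]$-relation with factor $2$, by the partial-multiplication identity coming from the biextension structure. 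It is integrable because pullback preserves integrability \cite[\S 2.5.5]{YZ21}. Uniqueness identifies it with the extension of $\L$; if $m>1$, I handle this by dividing by $m$ in $\widehat{\Pic}_\Q$, or by using that $\L$ itself corresponds to a section.

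The main obstacle is the uniqueness step in the arithmetic setting. An adelic line bundle on $\NN(A)^0$ is only defined as a colimit over quasi-projective models $\U$ over open parts of $\Spec O_K$, and rigidifications must be spread out compatibly (Remark \ref{Rmk: rigidified adelic line bundle}). I would handle this by choosing a common model $\U$ on which both candidate extensions and their rigidifications are defined, and then applying \cite[Corollary 3.4.2]{YZ21} to reduce to \cite[Theorem 6.1.3]{YZ21} on $A$.
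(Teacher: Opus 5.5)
Your proposal is correct and follows the paper's route. Like the paper, you build the nef extension from the $[2]$-model sequence when $\L$ is symmetric and relatively ample, use the decomposition $\L=\L^+-\L^-$ for general symmetric $\L$, pull back $\widehat{\overline{\PP}}$ along $(1,\bar\sigma\circ\bar\pi)$ in the antisymmetric case, and reduce uniqueness to \cite[Theorem 6.1.3]{YZ21} through the injectivity of $j^*$ on rigidified classes.

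You are right that $\bar\sigma$ only extends a multiple $m\sigma$, a point the paper passes over. Of your two remedies, though, only the division by $m$ in $\widehat{\Pic}_\Q$ is viable: the section of $\NN(A^\vee)$ attached to $\L$ need not lie in $\NN(A^\vee)^0$, which is where $\widehat{\overline{\PP}}$ is defined.
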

We single out an important case.
\begin{thm}[adelic prolongation of the Poincaré bundle]\label{adelic prolongation of Poincare bundle}
	Let $\Sbar$ be the smooth \emph{projective} compactification of $S$ over $K$. Let $\PP$ be the Poincaré bundle associated to $A$. The extended Poincaré bundle $\overline{\PP}$ (Proposition \ref{prolongation of  Poincaré  bundle})  admits an integrable adelic extension $\widehat{\overline{\PP}}$ on $\NN(A)^0\times_{\Sbar} \NN(A^\vee)^0$, which is unique with the dynamical property 
	\[
	[n]^*\widehat{\overline{\PP}}\cong n^2 \widehat{\overline{\PP}}\quad \text{for all } n\in \Z;
	\]  in particular, the adelic extension $\widehat{\overline{\PP}}$ is rigidified.
\end{thm}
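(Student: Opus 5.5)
The plan is to deduce the statement directly from Theorem~\ref{adelic extension summary, arithmetic case}, applied to the abelian scheme $A\times_S A^\vee$ over $S$ and the extended Poincaré bundle.

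\emph{Reduction to the Néron model of the product.} First I identify $\NN(A)^0\times_{\Sbar}\NN(A^\vee)^0$ with the identity component $\NN(A\times_S A^\vee)^0$ of the Néron model of $A\times_S A^\vee$ over $\Sbar$. Néron models commute with fibre products, since the Néron mapping property is checked factorwise. Taking identity components also commutes with products: the fibre of a product of smooth group schemes with connected fibres is connected, and the identity component is open. In particular the product is a smooth quasi-projective $\Sbar$-group scheme with connected fibres, and its multiplication maps $[n]$ are the total multiplications.

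\emph{Checking the hypotheses.} The Poincaré bundle $\PP$ on $A\times_S A^\vee$ is rigidified along the identity section and is symmetric (Remark~\ref{symmetry of Poincare bundle}). By Lemma~\ref{equivalence of cat. of rigidified line bundle} and Proposition~\ref{prolongation of  Poincaré  bundle}, its unique rigidified extension to the product is exactly $\overline{\PP}$, and it is again symmetric. The admissible normalized metric on $\PP$ exists by Proposition~\ref{admissible metric on line bundle on abelian scheme}. Theorem~\ref{adelic extension summary, analytic case} is therefore applicable with $\epsilon=2$. It gives an integrable arithmetic adelic extension $\widehat{\overline{\PP}}$ with $[n]^*\widehat{\overline{\PP}}\cong n^2\widehat{\overline{\PP}}$ for all $n$, and uniqueness under this property. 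Projectivity of $\Sbar$ is harmless, since a projective compactification is in particular a smooth partial compactification.

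\emph{Rigidification.} I take $n=0$. Then $[0]=e\circ\bar\pi$, where $\bar\pi$ is the structure map and $e$ the identity section, so $[0]^*\widehat{\overline{\PP}}\cong 0$ is the trivial adelic line bundle. Pulling back along $e$ and using $\bar\pi\circ e=\id$ gives $e^*\widehat{\overline{\PP}}=e^*[0]^*\widehat{\overline{\PP}}\cong\widehat{\O}_{\Sbar}$, which is a rigidification.

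\emph{Main obstacle.} The only delicate point is that the Theorem was proved for $\NN(B)^0$ with $B$ a single abelian scheme, and here $B=A\times_S A^\vee$. Its proof uses only the properties listed in the remark on higher-dimensional bases: quasi-projectivity and unique rigidified extension. I expect the main work to be checking that these hold for the product, and that the spreading-out in Lemma~\ref{quasi-projective model for arithmetic Neron model} applies to the product group scheme. Both points reduce to the product identification established in the first step.
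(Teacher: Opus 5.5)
Your proposal is correct and follows the paper's own argument. The paper obtains the statement as the special case of Theorem~\ref{adelic extension summary, arithmetic case} for the symmetric rigidified Poincaré bundle on $A\times_S A^\vee$, reads off the rigidification from $n=0$ as you do, and leaves tacit the identification $\NN(A\times_S A^\vee)^0\cong\NN(A)^0\times_{\Sbar}\NN(A^\vee)^0$ that you make explicit; the only slip is that your second paragraph cites Theorem~\ref{adelic extension summary, analytic case} where you mean the arithmetic version, Theorem~\ref{adelic extension summary, arithmetic case}.
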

\section{Variation of the  Néron--Tate height}\label{Application to Silverman's Limit Theorem}

Let $K$ be a number field, $S$  a smooth curve over $K$ and $A/S$  an abelian scheme. Let $S\subset \Sbar$ be the smooth projective compactification over $K$. Let $\eta\in S$ be the generic point. Recall the Néron--Tate height pairing in \S\ref{Sect:Néron--Tate Height Pairing}.
\begin{thm}[Silverman's limit theorem]\label{Silverman's limit thm}
For sections $P\in A(S), Q\in A^\vee(S)$,	one has the  asymptotic description of the fiberwise Néron--Tate height pairing:
	\[
	\lim\limits_{s\in S(\Kbar), h_{\Sbar}(s)\to \infty} \frac{\langle P_s, Q_s\rangle_{A_s}}{h_{\Sbar}(s)}=\langle P_\eta, Q_\eta\rangle_{A_\eta},
	\] where $h_{\Sbar}$ is any Weil height on $\Sbar$ associated to a degree-one line bundle on $\Sbar$.
\end{thm}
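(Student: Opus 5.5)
Using Theorem \ref{adelic prolongation of Poincare bundle}, I would realize the fiberwise pairing as the height of a single adelic line bundle on the projective curve $\Sbar$, and then compare that height with a Weil height attached to its underlying $\Q$-line bundle, whose degree is the geometric pairing.

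First, choose $n\geq 1$ so that $nP$ and $nQ$ extend to sections $\overline{P}\in \NN(A)^0(\Sbar)$ and $\overline{Q}\in \NN(A^\vee)^0(\Sbar)$ (Remark \ref{Rmk: order of component group is finite}, applied over the function field $K(S)$ and the finitely many points of $\Sbar\ohne S$). Put
\[
\widehat{\overline{\mathcal{M}}}\coloneq \frac{1}{n^2}(\overline{P},\overline{Q})^*\widehat{\overline{\PP}}\in \widehat{\Pic}(\Sbar/\Z)_\Q .
\]
This is integrable, since pullback preserves integrability by \cite[\S 2.5.5]{YZ21}. Its underlying $\Q$-line bundle is $\overline{\mathcal{M}}=\frac{1}{n^2}(\overline{P},\overline{Q})^*\overline{\PP}$. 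By Definition \ref{Def: NT height pairing}, applied in the function field case with $B=\Sbar$, we have $\deg \overline{\mathcal{M}}=\langle P_\eta,Q_\eta\rangle_{A_\eta}$.

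Second, I would prove $h_{\widehat{\overline{\mathcal{M}}}}(s)=\langle P_s,Q_s\rangle_{A_s}$ for $s\in S(\Kbar)$. Restricted to $A\times_S A^\vee$, $\widehat{\overline{\PP}}$ is the Yuan--Zhang adelic Poincaré bundle, by the uniqueness in Theorem \ref{adelic extension summary, arithmetic case}. Hence $\widehat{\overline{\mathcal{M}}}|_S=(P,Q)^*\widehat{\PP}$, and its height at $s$ is the fiberwise Néron--Tate pairing by \cite[Lemma~6.2.1]{YZ21}, or equivalently by \cite[Theorem 6.1.3]{YZ21}.

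Third, on the projective curve $\Sbar$, an adelic line bundle with underlying $\Q$-line bundle $\overline{\mathcal{M}}$ has height equal to a Weil height $h_{\overline{\mathcal{M}}}$ up to $O(1)$, provided the defining Cauchy sequence of models converges with respect to a boundary divisor. Here the boundary is vertical (over finitely many primes and the archimedean places), because $\Sbar$ is already projective. Concretely, $\widehat{\overline{\mathcal{M}}}$ is the limit of model metrized line bundles $\overline{\mathcal{M}}_i$ on projective models of $\Sbar$ over $O_K$, and the Cauchy condition makes $h_{\overline{\mathcal{M}}_i}-h_{\overline{\mathcal{M}}_1}$ uniformly bounded by $\epsilon_1 h_{\overline{\scrB}}$. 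Since $\scrB$ is vertical, $h_{\overline{\scrB}}$ is bounded on $\Sbar(\Kbar)$. Hence $h_{\widehat{\overline{\mathcal{M}}}}=h_{\overline{\mathcal{M}}_1}+O(1)$, and $h_{\overline{\mathcal{M}}_1}$ is a Weil height for $\overline{\mathcal{M}}$.

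Finally, by the standard property of Weil heights on curves, $h_{\overline{\mathcal{M}}}=\deg(\overline{\mathcal{M}})\,h_{\Sbar}+O\bigl(\sqrt{h_{\Sbar}}+1\bigr)$ for any degree-one $h_{\Sbar}$. This gives the same statement up to $o(h_{\Sbar})$ as $h_{\Sbar}(s)\to\infty$. Dividing by $h_{\Sbar}(s)$ and taking the limit yields the theorem.

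The main obstacle is the third step, namely controlling the adelic height by a model height with a bounded error. One must check that a quasi-projective model $\V\supset\Sbar$ can be chosen whose boundary lies over finitely many places, so that $h_{\overline{\scrB}}$ is bounded on $\Sbar(\Kbar)$. It would suffice to take $\V$ projective over an open subset of $\Spec O_K$, which is possible since $\Sbar$ is projective.
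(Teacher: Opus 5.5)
Your proposal is correct and follows the paper's own (second) proof. You build $\widehat{\overline{\mathcal{M}}}=\frac{1}{n^2}(\overline{P},\overline{Q})^*\widehat{\overline{\PP}}$ on $\Sbar$, identify its height on $S(\Kbar)$ with the fiberwise pairing and the degree of its underlying $\Q$-line bundle with $\langle P_\eta,Q_\eta\rangle_{A_\eta}$, compare with a Weil height up to $O(1)$, and conclude from $h_{\overline{\mathcal{M}}}=\mu h_H+O(\sqrt{h_H}+1)$, which the paper obtains from \cite[Corollary 9.3.10]{BG06} via $L=\overline{\mathcal{M}}-(\mu-1)H$. Your Step 3 goes beyond the paper, which simply asserts $h_{\widehat{\overline{\mathcal{M}}}}=h_{\overline{\mathcal{M}}}+O(1)$: representing $\widehat{\overline{\mathcal{M}}}$ on a model projective over an open subset of $\Spec O_K$ makes the boundary vertical and $h_{\overline{\scrB}}$ bounded on $\Sbar(\Kbar)$, and this justification is sound.
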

\begin{proof}
	This is a direct reformulation of  \cite[Theorem B]{Sil83}. We will give another proof later in this section.
\end{proof}
\begin{rem}
	Suppose $h_{\Sbar,L}$ is a Weil height associated to a line bundle $L\in \Pic(\Sbar)$ with nonzero degree. Then Theorem \ref{Silverman's limit thm} still holds  with the normalized height $h_{\Sbar}\coloneq \frac{1}{\deg L}h_{\Sbar,L}$; see \cite[\S 4]{Sil83} for the argument.
\end{rem}

\begin{void}
	Let $\PP$ be the  Poincaré bundle on $A\times_S A^\vee$. By \cite[Theorem 6.1.3]{YZ21}, it admits an integrable adelic extension $\widehat{\PP}\in \widehat{\Pic}(A\times_S A/\Z)$, rigidified along the identity section. Consider a section 
	\[
	(P,Q)\colon S\to A\times_S A^\vee.
	\] Consider the integrable (arithmetic) adelic line bundle $\widehat{\mathcal{M}}\coloneq(P,Q)^*\widehat{\PP}$ on $S$. By \cite[Lemma 6.2.1]{YZ21}, see also the first part of Remark \ref{height and height pairing}, the  height $h_{\widehat{{\mathcal{M}}}}$ associated to the adelic line bundle $\widehat{{\mathcal{M}}}$ (\emph{cf}.\ \cite[\S 5.3.1]{YZ21}) gives the fiberwise Néron--Tate height:
	\begin{equation}\label{specialization, NT-height}
		h_{\widehat{{\mathcal{M}}}}(s)=\langle P_s,Q_s\rangle_{A_s},\quad s\in S(\overline{K}).
	\end{equation}
\end{void}
\begin{thm}\label{adelic refinement of Silverman's limit thm}
	The  adelic line bundle $\widehat{\mathcal{M}}$  extends to an integrable adelic $\Q$-line bundle~$\widehat{\overline{\mathcal{M}}}\in \widehat{\Pic}(\Sbar/\Z)_\Q$.
\end{thm}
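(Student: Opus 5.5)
The plan is to pull the adelic Poincaré bundle of Theorem \ref{adelic prolongation of Poincare bundle} back along the extended sections and divide by $n^2$. By Remark \ref{Rmk: order of component group is finite}, applied over the finitely many points of $\Sbar\ohne S$ where the component groups of $\NN(A)$ and $\NN(A^\vee)$ can be nontrivial, there is a positive integer $n$ such that $nP$ and $nQ$ extend to sections
\[
\overline{P}\colon \Sbar\to \NN(A)^0,\qquad \overline{Q}\colon \Sbar\to \NN(A^\vee)^0 .
\]
These exist by the Néron mapping property, which gives sections of $\NN(A)(\Sbar)$; the multiple $n$ kills the component groups. We set
\[
\widehat{\overline{\mathcal{M}}}\coloneq \frac{1}{n^2}(\overline{P},\overline{Q})^*\widehat{\overline{\PP}}\in \widehat{\Pic}(\Sbar/\Z)_\Q .
\]
By \ref{pullback of adelic line bundles} and the preservation of integrability under pullback \cite[\S 2.5.5]{YZ21}, $(\overline{P},\overline{Q})^*\widehat{\overline{\PP}}$ is an integrable adelic line bundle on $\Sbar$. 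Dividing by $n^2$ then yields an integrable adelic $\Q$-line bundle.

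It remains to check that $\widehat{\overline{\mathcal{M}}}|_S\cong \widehat{\mathcal{M}}$ in $\widehat{\Pic}(S/\Z)_\Q$. Restricting to $S$, the section $(\overline{P},\overline{Q})|_S$ equals $(nP,nQ)=[n]\circ(P,Q)$ as a map $S\to A\times_S A^\vee$. The restriction of $\widehat{\overline{\PP}}$ to $A\times_S A^\vee$ is Yuan--Zhang's $\widehat{\PP}$, by the uniqueness part of Theorem \ref{adelic extension summary, arithmetic case}. Hence
\[
(\overline{P},\overline{Q})^*\widehat{\overline{\PP}}\big|_S\cong (P,Q)^*[n]^*\widehat{\PP}\cong n^2(P,Q)^*\widehat{\PP}=n^2\widehat{\mathcal{M}},
\]
using the dynamical property $[n]^*\widehat{\PP}\cong n^2\widehat{\PP}$ from \cite[Theorem 6.1.3]{YZ21}. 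Here $[n]$ is the total multiplication, which is the relevant one since $\PP$ is symmetric. Dividing by $n^2$ gives the claim. I would also note that the result is independent of $n$: replacing $n$ by $mn$ gives $[m]\circ(\overline{P},\overline{Q})$, and the dynamical property on $\NN(A)^0\times_{\Sbar}\NN(A^\vee)^0$ absorbs the factor $m^2$.

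The main obstacle is the compatibility of the restriction: the pullback of adelic line bundles along $S\hookrightarrow\Sbar$ must commute with pullback along the sections. This is functoriality of pullback \cite[\S 2.5.5]{YZ21}, applied to the commutative square formed by $(P,Q)$ composed with $[n]$ and $(\overline{P},\overline{Q})$. A further subtlety is identifying $\widehat{\overline{\PP}}|_{A\times_S A^\vee}$ with $\widehat{\PP}$. This follows because both are rigidified and satisfy $[2]^*(-)\cong 4(-)$, so the uniqueness in \cite[Theorem 6.1.3]{YZ21} applies; this is exactly what Theorem \ref{adelic extension summary, arithmetic case} asserts by "extends the isomorphism".
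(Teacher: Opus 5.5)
Your proposal is correct and follows the paper's proof. Like the paper, you extend $(nP,nQ)$ to $(\overline{P},\overline{Q})\colon\Sbar\to\NN(A)^0\times_{\Sbar}\NN(A^\vee)^0$, set $\widehat{\overline{\mathcal{M}}}=\frac{1}{n^2}(\overline{P},\overline{Q})^*\widehat{\overline{\PP}}$, and use that integrability is preserved under pullback \cite[\S 2.5.5]{YZ21}. Your explicit checks that $\widehat{\overline{\mathcal{M}}}|_S\cong\widehat{\mathcal{M}}$ (via functoriality of pullback, $\widehat{\overline{\PP}}|_{A\times_S A^\vee}\cong\widehat{\PP}$, and $[n]^*\widehat{\PP}\cong n^2\widehat{\PP}$) and that the result is independent of $n$ spell out what the paper only asserts as ``the desired extension'' and ``clearly independent of $n$''.
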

\begin{proof}
	By Theorem \ref{adelic prolongation of Poincare bundle}, the extended Poincaré bundle $\overline{\PP}$ on the Néron model $\NN(A)^0\times_{\Sbar}\NN(A^\vee)^0$ admits an integrable adelic extension $\widehat{\overline{\PP}}\in \widehat{\Pic}(\NN(A)^0\times_{\Sbar}\NN(A^\vee)^0/\Z)$. On the other hand, there is a positive integer $n$ such that $(nP,nQ)$ extends to a section
	\[
	(\overline{P},\overline{Q})\colon \Sbar\to \NN(A)\times_{\Sbar}\NN(A^\vee)^0
	\] (recall Remark \ref{Rmk: order of component group is finite}). Then 
	\[
	\widehat{\overline{\mathcal{M}}}\coloneq \frac{1}{n^2}(\overline{P},\overline{Q})^*\widehat{\overline{\PP}}\in \widehat{\Pic}(\Sbar/\Z)_\Q
	\] is the desired extension; the construction is clearly independent of $n$. The integrability of $\widehat{\overline{\mathcal{M}}}$ follows from the integrability of $\widehat{\L}$, as it is preserved under pullback, see \cite[\S 2.5.5]{YZ21}.
\end{proof}
\begin{rem}\label{geometric part of the adelic extension}
	Let $\overline{\mathcal{M}}\in \Pic(\Sbar)_\Q$ be the underlying line bundle of $\widehat{\overline{\mathcal{M}}}$. By construction, it is obtained by the pullback 
	\[
	\overline{\mathcal{M}}\cong \frac{1}{n^2}(\overline{P},\overline{Q})^*\overline{\PP}
	\] of the extended Poincaré bundle $\overline{\PP}$ (see Proposition \ref{prolongation of  Poincaré  bundle}) along the extended sections. Thus its (geometric) degree 
	\[
	\deg \overline{\mathcal{M}}=\langle P_\eta,Q_\eta\rangle_{A_\eta}\in \Q
	\] gives the Néron--Tate height pairing of the generic sections over the function field of $\Sbar$. 
\end{rem}
We explain how   Theorem \ref{adelic refinement of Silverman's limit thm} recovers the limit theorem of Silverman.
\begin{proof}[Proof of Theorem \ref{Silverman's limit thm}]
	In fact, we will prove a finer asymptotic result. For a line bundle~$L$ on $\Sbar$, we choose a Weil height function $h_L$ attached to $L$.  Let $H$ be a line bundle of degree one on $\Sbar$. Recall the Landau symbols from Convention \ref{General Conventions}(2). We claim that the following asymptotics holds: 
	\begin{equation}\label{Silverman's limit thm, refined}
		h_{\widehat{\overline{\mathcal{M}}}}(s)=\langle P_\eta,Q_\eta\rangle h_{H}(s)+O\left(\sqrt{h_{H}(s)}+1\right),\quad s\in \Sbar(\overline{K}).
	\end{equation}
	This expression implies Silverman's limit theorem by noticing $h_{\widehat{\overline{\mathcal{M}}}}(s)=h_{\widehat{\mathcal{M}}}(s)$ for $s\in S(\overline{K})$.
	
	Now we prove \eqref{Silverman's limit thm, refined}. Write $\mu\coloneq  \deg(\overline{\mathcal{M}})=\langle P_\eta,Q_\eta\rangle_{A_\eta}$. Consider the line bundle
	\[
	L\coloneq \overline{\mathcal{M}}-(\mu-1)H.
	\] For degree reasons, $L-H\in \Pic^0(\Sbar)$, i.e.\ $L$ is algebraically equivalent to $H$. Since $H$ is ample, by \cite[Corollary 9.3.10]{BG06}, we have
	\[
	h_L=h_H+O\left(\sqrt{|h_H|}+1\right)
	\] as functions on $\Sbar(\overline{K})$. On the other hand, by Weil's height machine, we have
	\[
	h_{\overline{\mathcal{M}}}=h_L+(\mu-1)h_H+O(1).
	\] Thus
	\[
	h_{\overline{\mathcal{M}}}=\mu h_H+O\left(\sqrt{|h_H|}+1\right).
	\] We are done by noting
	\[
	h_{\overline{\mathcal{M}}}=h_{\widehat{\overline{\mathcal{M}}}}+O(1). \qedhere
	\] 
\end{proof}

\begin{rem}\label{Silverman-Tate, literature}
	The limit theorem of Silverman has been refined by several authors.	For families of elliptic curves, Tate \cite{Tate83} proved  a stronger result that the fiberwise Néron--Tate height is induced by a Weil height on the base with bounded error terms (a ``non-adelic version'' of Theorem \ref{adelic refinement of Silverman's limit thm}). Later, W. Green \cite{Green} generalized Tate's result for  families of general abelian varieties. Green's result was obtained independently by Biesel--Holmes--de Jong \cite{BHdJ17} by more elementary (without using the theory of compactification of the moduli space of abelian varieties), or more combinatorial, methods.
	
	The adelic refinement of Silverman's  theorem, namely, that the fiberwise Néron--Tate height (pairing) is induced by a height function on the base \emph{without error terms} (Theorem \ref{adelic refinement of Silverman's limit thm}),  was first established by DeMarco--Mavraki \cite{DM20} for families of elliptic curves using equidistribution techniques, and later by A. Carney \cite{Carney} for general abelian varieties (parametrized by a curve) using Yuan--Zhang's theory. However, it is unclear to us whether Carney constructs an adelic structure on the extended Poincaré bundle, as we do; see his proof of Theorem 4.1 in loc.\ cit..
\end{rem}
\begin{rem}[height and height pairing]\label{height and height pairing}
	Let $\theta$ be a  rigidified symmetric relatively ample line bundle on the abelian scheme $A/S$. By \cite[Theorem 6.1.3]{YZ21}, it has an adelic extension $\widehat{\theta}\in \widehat{\Pic}(A/\Z)$. Moreover,
	\[
	\hat{h}_{\theta}=h_{\widehat{\theta}}
	\] where the left hand side notate  the (classical) Néron--Tate height associated to $\theta$ (Definition \ref{def. NT height}). Let $P\in A(S)$ be a section and consider $$Q\coloneq \tau_P^*\theta-\theta\in \Pic^0_{A/S}(S)=A^\vee(S),$$ where $\tau_P\colon A\to A$ denotes the translate-by-$P$ morphism. The Néron--Tate height and Néron--Tate height pairing are related by
	\[
	2\hat{h}_{\theta}(s)=\langle P_s,Q_s\rangle\in \R, \quad s\in S(\Kbar),
	\] as explained in \eqref{2times Neron-Tate}. 
	
	Furthermore, by Theorem \ref{adelic extension summary, arithmetic case}, the adelic line bundle $\widehat{\theta}$ extends to a  \emph{nef} adelic line bundle $\widehat{\overline{\theta}}$ on $\NN(A)^0$. Let $\overline{P}\in \NN(A)^0(\Sbar)$ be the section extending a positive multiple  $nP\in A(S)$. Let $\widehat{\overline{\mathcal{M}}}=\frac{1}{n^2}\overline{P}^*\widehat{\overline{\theta}}\in \Pic(\Sbar)_\Q$. By the earlier discussion,
	\[
	h_{\widehat{\overline{\mathcal{M}}}}(s)=\hat{h}_\theta(P_s),\quad s\in S(\overline{K}).
	\] And the geometric Néron--Tate height  of $P_\eta$ is given by
	\[
	\hat{h}(P_\eta)\coloneq \hat{h}_{A_\eta,\theta_\eta}(P_\eta)=\deg \overline{\mathcal{M}},
	\] where $\theta_\eta$ is the symmetric ample line bundle on $A_\eta$ induced by $\theta$. We note that by \cite[Theorem 9.2.7(c)]{BG06},
	\begin{equation}\label{NT height of ample class is nonnegative}
		\hat{h}(P_\eta)\geq 0.
	\end{equation}
\end{rem}

We also recover \cite[Theorem C]{Sil83}:
\begin{cor}\label{uniform torsion, 1d}
	Let $P\in A(S)$ be a section and  assume that its geometric height is nonzero: $\hat{h}(P_\eta)\neq 0$. Let $\delta$ be a positive integer. Then the set
	\[
	T(\delta)\coloneq \{s\in S(\Kbar)\mid [\kappa(s):K]\leq \delta\text{ and }P_s\text{ is an torsion element of }A_s(\Kbar)\}
	\] is not Zariski dense  in $S$; equivalently, $T(\delta)$ is finite.
\end{cor}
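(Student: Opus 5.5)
We prove the corollary through the refined asymptotic \eqref{Silverman's limit thm, refined}, combined with Northcott's theorem on $\Sbar$. Fix a rigidified symmetric relatively ample $\theta$ on $A/S$ and take $Q\coloneq \tau_P^*\theta-\theta\in A^\vee(S)$, as in Remark \ref{height and height pairing}. Then $\langle P_s,Q_s\rangle_{A_s}=2\hat{h}_\theta(P_s)$ for $s\in S(\Kbar)$. By Theorem \ref{adelic refinement of Silverman's limit thm}, with $\widehat{\overline{\mathcal{M}}}\coloneq\frac{1}{n^2}(\overline{P},\overline{Q})^*\widehat{\overline{\PP}}$, we get
\[
h_{\widehat{\overline{\mathcal{M}}}}(s)=2\hat{h}_\theta(P_s),\quad s\in S(\Kbar).
\]
Moreover $\mu\coloneq\deg\overline{\mathcal{M}}=\langle P_\eta,Q_\eta\rangle=2\hat{h}(P_\eta)$. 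By \eqref{NT height of ample class is nonnegative} and the hypothesis, $\mu>0$.

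Now let $s\in T(\delta)$. Since $P_s$ is torsion, $\hat{h}_\theta(P_s)=0$, so $h_{\widehat{\overline{\mathcal{M}}}}(s)=0$. Fix a degree-one line bundle $H$ on $\Sbar$. The asymptotic \eqref{Silverman's limit thm, refined}, established in the proof of Theorem \ref{Silverman's limit thm}, gives
\[
0=\mu\, h_H(s)+O\left(\sqrt{|h_H(s)|}+1\right)
\]
uniformly in $s$. Replacing $H$ by an ample degree-one bundle, we may assume $h_H$ is bounded below, say $h_H\geq -C$, by Weil's height machine. Since $\mu>0$, the inequality $\mu h_H(s)\leq C'(\sqrt{|h_H(s)|}+1)$ then forces $h_H(s)$ to be bounded above on $T(\delta)$. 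This is the one small point to check: the square-root error is dominated by the linear term, so $|h_H|$ is bounded on $T(\delta)$.

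Thus $T(\delta)$ consists of points of $\Sbar(\Kbar)$ of degree at most $\delta$ over $K$ and of bounded height with respect to the ample line bundle $H$ on the projective curve $\Sbar$. Northcott's theorem (e.g.\ \cite{BG06}) says this set is finite. Finiteness trivially implies non-density. Conversely, a subset of the curve $S$ that is not Zariski dense is contained in a proper closed subset, which is finite; so the two formulations in the corollary are equivalent.

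The main obstacle is already handled by the earlier results. It is the identification of the fiberwise pairing with an honest height on the projective curve $\Sbar$ whose underlying degree is $\mu$, which comes from Theorem \ref{adelic refinement of Silverman's limit thm} and Remark \ref{geometric part of the adelic extension}. The remaining care is the sign and positivity bookkeeping: we use $\mu=2\hat{h}(P_\eta)>0$, and this positivity is exactly where the hypothesis $\hat{h}(P_\eta)\neq0$ enters.
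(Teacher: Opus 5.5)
Your argument is correct, but it takes a different and more roundabout route than the paper.

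\textbf{The paper's route.} The paper works directly with $\widehat{\overline{\mathcal{M}}}=\frac{1}{n^2}\overline{P}^*\widehat{\overline{\theta}}$, so there is no Poincaré pairing and no auxiliary $Q$. It observes that $\deg\overline{\mathcal{M}}=\hat{h}(P_\eta)>0$. Hence $\overline{\mathcal{M}}$ is itself ample on the curve $\Sbar$. Since $h_{\widehat{\overline{\mathcal{M}}}}$ agrees with a Weil height for $\overline{\mathcal{M}}$ up to $O(1)$ and vanishes on $T(\delta)$, Northcott for $\overline{\mathcal{M}}$ finishes the proof immediately.

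\textbf{Your route.} You compare with a degree-one $H$ through the refined asymptotic \eqref{Silverman's limit thm, refined}. This imports the square-root estimate for algebraically equivalent bundles from \cite[Corollary 9.3.10]{BG06}. That is more than is needed, since positive degree on a curve already gives ampleness, and then Northcott applies to $\overline{\mathcal{M}}$ directly.

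\textbf{What your route buys.} Your argument really only uses the ratio statement. Suppose $T(\delta)$ were infinite. Northcott for $H$ would give points $s_k\in T(\delta)$ with $h_H(s_k)\to\infty$. Then $0=\langle P_{s_k},Q_{s_k}\rangle_{A_{s_k}}/h_H(s_k)\to\mu>0$, a contradiction. So your structure shows the corollary follows from Silverman's limit theorem alone. This is essentially Silverman's original deduction of his Theorem~C from Theorem~B, and it needs no adelic input. The paper's route instead exploits the stronger bounded-error identification of the fiberwise height with an honest height on $\Sbar$, which makes the error-term bookkeeping unnecessary.

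A minor point: every degree-one line bundle on the smooth projective curve $\Sbar$ is already ample, so your step of replacing $H$ by an ample degree-one bundle is vacuous.
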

\begin{proof} 
	Let $$\widehat{\overline{\mathcal{M}}}=\frac{1}{n^2}\overline{P}^*\widehat{\overline{\theta}}$$ be the nef $\Q$-adelic line bundle on $\Sbar$ introduced in  Remark \ref{height and height pairing}.  The same remark implies that $$h_{\widehat{\overline{\mathcal{M}}}}(s)=\hat{h}_\theta(P_s)+O(1)$$ as functions on $S(\overline{K})$. 
	
	By assumption and \eqref{NT height of ample class is nonnegative}, we  have $\deg(\overline{\mathcal{M}})>0$, i.e.\ $\overline{\mathcal{M}}$ is ample. The corollary then follows from the Northcott property for $h_\mathcal{M}$ (see \cite[Theorem 2.4.9]{BG06}).
\end{proof}
\begin{rem}[{\cite[Remark 1.2]{Carney}}]
	By \cite[Theorem 9.15]{Con06}, the condition $\hat{h}(P_\eta)\neq 0$ in Corollary \ref{uniform torsion, 1d} is equivalent to  $P_\eta$ \emph{not} lying in a torsion coset of the $K(S)/K$-trace of the abelian variety $A_\eta$. In particular, if the trace is zero, the condition is equivalent to  $P_\eta$ being non-torsion.
\end{rem}
\begin{rem}
	Let us mention that Silverman's original proof of his  limit theorem is based on an inequality now known as the Silverman--Tate height inequality; see \cite[Theorem~A]{Sil83}, or our joint work  \cite{BC24} with D. Biswas for some generalizations. We remark that the inequality itself holds over a higher dimensional base. A recent application of this inequality appears in the work of Dimitrov--Gao--Habegger on the uniform Mordell problem \cite{DGH}. On the other hand, as observed by Holmes \cite[Theorem 2.4]{Holmes_UniformTorsion}, a higher dimensional analogue of Corollary \ref{uniform torsion, 1d} would imply the \emph{uniform torsion conjecture} for abelian varieties.
\end{rem}
\bibliographystyle{alpha}
\bibliography{RefforPhDStudy}
\end{document}